\theoremstyle{theorem}
\newtheorem{theorem}{Theorem}[section]
\newtheorem{lemma}{Lemma}[section]
\newtheorem{corollary}{Corollary}[section]
\theoremstyle{definition}
\newtheorem{definition}{Definition}[section]
\newtheorem{example}{Example}[section]
\numberwithin{equation}{section}
\begin{document}
	\title{A Generalised Continuous Primitive Integral and Some of Its
		Applications}
	\author{S. Mahanta$^{1}$,\quad S. Ray$^{2}$\vspace{2mm}\\
		\em\small ${}^{1,2}$Department of Mathematics, Visva-Bharati, 731235, India.\\
		\em\small ${}^{1}$e-mail: sougatamahanta1@gmail.com\\
		\em\small ${}^{2}$e-mail: subhasis.ray@visva-bharati.ac.in}
	
	\date{}
	\maketitle
	
		\begin{abstract}
			Using the Laplace derivative a Perron type integral, the Laplace integral, is defined. Moreover, it is shown that this integral includes Perron integral and to show that the inclusion is proper, an example of a function is constructed, which is Laplace integrable but not Perron integrable. Properties of integrals such as fundamental theorem of calculus, Hake's theorem, integration by parts, convergence theorems, mean value theorems, the integral remainder form of Taylor's theorem with an estimation of the remainder, are established. It turns out that concerning the Alexiewicz's norm, the space of all Laplace integrable functions is incomplete and contains the set of all polynomials densely. Applications are shown to Poisson integral, a system of generalised ordinary differential equations and higher-order generalised ordinary differential equation. 
		\end{abstract}
	    \thanks{\bf Keywords:} Perron integral, Henstock integral, Laplace integral, Laplace derivative, Laplace continuity, Poisson integral, Ordinary differential equation.\\\\
	    \thanks{\bf Mathematics Subject Classification(2020):} 26A39, 26A27, 34A06.
	
	%% \linenumbers
	
	%% main text
	\section{Introduction}
	\par In integral calculus, there are two fundamental concepts of integration, one is the Riemann integral, and the other is Newton integral, which are not comparable. Even the Lebesgue integral does not contain the Newton integral. This phenomenon leads to the problem of defining an integral which can contain both Lebesgue and Newton integrals. Denjoy, Perron and Henstock individually defined three types of integrals, all of which have continuous primitives and are more general than Lebesgue and Newton integrals (\cite{RSG}). However, all of them are equivalent (Chapter 11 of \cite{RSG}). Using the concept of $ACG$ function and approximate derivative another integral was developed, known as the Khintchine integral or wide Denjoy integral, which is more general than the Perron integral and also has continuous primitives. In the literature, there are lots of other integrals with discontinuous primitives, most of which are developed to recover the coefficients of a trigonometric series (\cite{INTRSEB}, \cite{APMCIN}, \cite{AGNAPTRSR}, \cite{ILPIntegral}). As integrals with continuous primitives have smoother applications than integrals with discontinuous primitives, it will be our benefit if we have sufficient number integrals with continuous primitives.
	\par In this paper, using Laplace derivative (\cite{TLD}, \cite{OLD}, \cite{TLD1}, \cite{TLD2}, \cite{OTSymLapDer}), a Perron type integral is defined, which has continuous primitives (Theorem \ref{Continuous primitive}). We call it the Laplace integral. Laplace derivative is also used in \cite{APTILD} to define $L_{n}$-integral for $n\in\mathbb{N}$. It is also possible to define $L_{0}$-integral similarly as $L_{n}$-integral ($n\geqslant 1$), which turns out to be equivalent to the Laplace integral. This paper is organized in the following order. After discussing some preliminary definitions in Section \ref{Preliminaries}, we present the definition of Laplace integral in Section \ref{Definition}. In Section \ref{NeWexample}, a continuous function $f\colon[0,1]\to\mathbb{R}$ is constructed, which is Laplace integrable but not Perron integrable. All of the properties of this integral are given in Section \ref{Basic Properties}--\ref{Taylor-theorem}. Section \ref{Poisson integral} appears with an application of this integral to Poisson integral and its boundary behaviour. In Section \ref{Exist-uniqu-IVP}, existence and uniqueness theorems for a system of generalised ordinary differential equations and higher-order generalised ordinary differential equation are given.

	\section{Preliminaries}\label{Preliminaries}

	\begin{definition}[\cite{OLC}]
		Let $f$ be Perron integrable on a neighbourhood of $x$. Then $f$ is said to be Laplace differentiable at $x$ if $\exists\, \delta>0$ such that the following limits
		\[
		\lim\limits_{s\rightarrow \infty}s^{2}\int_{0}^{\delta}e^{-st}[f(x+t)-f(x)]\,dt
		\]
		and
		\[
		\lim\limits_{s\rightarrow \infty}(-s^{2})\int_{0}^{\delta}e^{-st}[f(x-t)-f(x)]\,dt
		\]
		exist and are equal. In this case, the common value is called the Laplace derivative of $f$ at $x$ and denoted by $LD_{1}f(x)$.
	\end{definition}

	\par Definitions of $\underline{LD}_{1}^{+}f(x)$, $\underline{LD}_{1}^{-}f(x)$, $\overline{LD}_{1}^{+}f(x)$, $\overline{LD}_{1}^{\,-}f(x)$ are obvious. For the definition of higher order Laplace derivative, $LD_{n}f$ $(n\geqslant 2)$, see \cite{OLD}, \cite{HOD}.

	\begin{lemma}\label{lemma11}
		Let $\delta>0$ and let $F(s,\delta)=s^{2}\int_{0}^{\delta}e^{-st}\phi(t)\,dt$, where $\phi$ is Perron integrable. Then the limits $\liminf\limits_{s\rightarrow \infty} F(s,\delta)$, $\limsup\limits_{s\rightarrow \infty} F(s,\delta)$ do not depend on $\delta$.
	\end{lemma}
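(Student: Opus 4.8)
The plan is to show that the difference $F(s,\delta_1) - F(s,\delta_2)$ tends to $0$ as $s\to\infty$, for any two positive numbers $\delta_1 < \delta_2$; once this is established, both the $\liminf$ and the $\limsup$ of $F(s,\cdot)$ agree for the two choices, and since $\delta_1,\delta_2$ were arbitrary the independence follows. Writing out the difference, $F(s,\delta_2) - F(s,\delta_1) = s^{2}\int_{\delta_1}^{\delta_2} e^{-st}\phi(t)\,dt$, so the whole matter reduces to proving that $s^{2}\int_{\delta_1}^{\delta_2} e^{-st}\phi(t)\,dt \to 0$ as $s\to\infty$.

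First I would handle the integral $\int_{\delta_1}^{\delta_2} e^{-st}\phi(t)\,dt$ by integration by parts, using that $\phi$, being Perron integrable on $[\delta_1,\delta_2]$, has a continuous primitive $\Phi(t) = \int_{\delta_1}^{t}\phi$. This gives
\[
\int_{\delta_1}^{\delta_2} e^{-st}\phi(t)\,dt = e^{-s\delta_2}\Phi(\delta_2) + s\int_{\delta_1}^{\delta_2} e^{-st}\Phi(t)\,dt,
\]
where the integration-by-parts formula for the Perron integral against a $C^1$ (indeed $C^\infty$) factor $e^{-st}$ is standard. Multiplying through by $s^2$, the first term becomes $s^{2}e^{-s\delta_2}\Phi(\delta_2)$, which tends to $0$ since $\delta_2 > 0$ and exponential decay beats polynomial growth. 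For the second term I get $s^{3}\int_{\delta_1}^{\delta_2} e^{-st}\Phi(t)\,dt$; since $\Phi$ is continuous on the compact interval $[\delta_1,\delta_2]$ it is bounded, say $|\Phi|\le M$, and then $\bigl|s^{3}\int_{\delta_1}^{\delta_2} e^{-st}\Phi(t)\,dt\bigr| \le M s^{3}\int_{\delta_1}^{\delta_2} e^{-st}\,dt \le M s^{3} e^{-s\delta_1}/s = M s^{2} e^{-s\delta_1} \to 0$ as $s\to\infty$, again because $\delta_1 > 0$.

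Combining these two estimates yields $s^{2}\int_{\delta_1}^{\delta_2} e^{-st}\phi(t)\,dt \to 0$, hence $F(s,\delta_1)$ and $F(s,\delta_2)$ differ by a quantity vanishing at infinity, so their $\liminf$s coincide and their $\limsup$s coincide. I do not anticipate a serious obstacle here; the only point requiring a little care is the justification of integration by parts for the Perron integral when one factor is the smooth function $e^{-st}$, but this is a classical fact (the product of a Perron integrable function with a function of bounded variation, here even $C^\infty$, behaves as expected, and the formula $\int \phi\, g = [\Phi g] - \int \Phi g'$ holds). Everything else is the elementary observation that $s^{k}e^{-s\delta}\to 0$ for fixed $\delta>0$ and any $k$, together with the boundedness of a continuous function on a compact interval.
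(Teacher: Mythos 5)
Your proposal is correct and follows essentially the same route as the paper: split the integral at $\delta_1$ and show via integration by parts (against the continuous primitive $\Phi$ of the Perron integrable $\phi$) that $s^{2}\int_{\delta_1}^{\delta_2}e^{-st}\phi(t)\,dt\to 0$. The paper leaves the integration-by-parts step as "easy to prove"; your write-up simply supplies the details, and the estimates you give are sound.
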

	\begin{proof}
		Let $0<\delta_{1}<\delta_{2}$. Then
		\begin{align*}
		s^{2}\int_{0}^{\delta_{2}}e^{-st}\phi(t)\,dt=s^{2}\int_{0}^{\delta_{1}}e^{-st}\phi(t)\,dt + s^{2}\int_{\delta_{1}}^{\delta_{2}}e^{-st}\phi(t)\,dt.
		\end{align*}
		Using integration by parts, it is easy to prove that $\lim\limits_{s\rightarrow \infty}s^{2}\int_{\delta_{1}}^{\delta_{2}}e^{-st}\phi(t)\,dt=0$ and this completes the proof.
	\end{proof}

	\begin{theorem}
		Let $f$ be Perron integrable on $[a,b]$. Then the definitions of $\underline{LD}_{1}^{-}f$, $\underline{LD}_{1}^{+}f$, $\overline{LD}_{1}^{\,-}f$ and $\overline{LD}_{1}^{+}f$ do not depend on $\delta$.
	\end{theorem}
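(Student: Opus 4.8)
The plan is to reduce the statement to Lemma \ref{lemma11}. Fix $x\in[a,b]$. The $\delta$ appearing in the one-sided definitions is only required to be small enough that the relevant one-sided interval lies in $[a,b]$; the set of such $\delta$ is an interval $(0,\delta^{*}]$ (with $\delta^{*}=b-x$ on the right, $\delta^{*}=x-a$ on the left), and the claim is that for any $\delta_1,\delta_2$ in this interval the four extreme limits computed with $\delta_1$ agree with those computed with $\delta_2$.

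First I would handle the right-hand derivatives. Set $\phi^{+}(t)=f(x+t)-f(x)$ for $t\in[0,b-x]$. Since $f$ is Perron integrable on $[a,b]$, the translate $t\mapsto f(x+t)$ is Perron integrable on $[0,b-x]$, and subtracting the constant $f(x)$ preserves Perron integrability; hence $\phi^{+}$ is Perron integrable on $[0,b-x]$, and in particular on every $[0,\delta]$ with $0<\delta\le b-x$. Writing $F(s,\delta)=s^{2}\int_{0}^{\delta}e^{-st}\phi^{+}(t)\,dt$, one has directly from the definitions
\[
\overline{LD}_{1}^{+}f(x)=\limsup_{s\to\infty}F(s,\delta),\qquad \underline{LD}_{1}^{+}f(x)=\liminf_{s\to\infty}F(s,\delta),
\]
so Lemma \ref{lemma11}, applied to $\phi=\phi^{+}$, immediately gives that both are independent of $\delta$.

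For the left-hand derivatives, put $\phi^{-}(t)=f(x-t)-f(x)$ for $t\in[0,x-a]$. The reflection $t\mapsto f(x-t)$ is Perron integrable on $[0,x-a]$ by the affine change of variable $u=x-t$, so $\phi^{-}$ is Perron integrable there, hence on each $[0,\delta]$ with $0<\delta\le x-a$. Applying Lemma \ref{lemma11} to $\phi^{-}$, the quantities $\liminf_{s\to\infty}s^{2}\int_{0}^{\delta}e^{-st}\phi^{-}(t)\,dt$ and $\limsup_{s\to\infty}s^{2}\int_{0}^{\delta}e^{-st}\phi^{-}(t)\,dt$ do not depend on $\delta$. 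Since
\[
\underline{LD}_{1}^{-}f(x)=\liminf_{s\to\infty}\Bigl(-s^{2}\int_{0}^{\delta}e^{-st}\phi^{-}(t)\,dt\Bigr)=-\limsup_{s\to\infty}s^{2}\int_{0}^{\delta}e^{-st}\phi^{-}(t)\,dt,
\]
and symmetrically $\overline{LD}_{1}^{\,-}f(x)=-\liminf_{s\to\infty}s^{2}\int_{0}^{\delta}e^{-st}\phi^{-}(t)\,dt$, these are independent of $\delta$ as well.

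I do not expect a genuine obstacle here: the only point that deserves an explicit word is that the Perron integral is stable under the affine substitutions $t\mapsto x+t$ and $t\mapsto x-t$, which is a standard change-of-variable property, after which Lemma \ref{lemma11} does all the work. The only bookkeeping is to note that ``independence of $\delta$'' is asserted for all admissible $\delta$ in the interval $(0,\delta^{*}]$ simultaneously, which is exactly what the argument above delivers.
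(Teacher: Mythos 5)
Your proposal is correct and follows exactly the paper's (much terser) argument: the paper simply says to take $\phi(t)=f(x+t)-f(x)$ and invoke Lemma \ref{lemma11}. You have merely filled in the routine details (Perron integrability of the translated/reflected function, and the sign flip relating the left-hand derivates to $\limsup$/$\liminf$ of $s^{2}\int_{0}^{\delta}e^{-st}\phi^{-}(t)\,dt$), all of which are sound.
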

	\begin{proof}
		Consider $\phi(t)=f(x+t)-f(x)$. Now using Lemma \ref{lemma11}, we will arrive at our conclusion.
	\end{proof}

	\section{Definition of Laplace integral}\label{Definition}
	\begin{definition}
		Let $f\colon[a,b]\to \mathbb{R}$ and let $U\colon[a,b]\to \mathbb{R}$. Then $U$ is said to be a major function of $f$ if $U$ is continuous, $\underline{LD}_{1}U \geqslant f$ on $[a,b]$ and $\underline{LD}_{1}U>-\infty$ on $[a,b]$.
	\end{definition}

	\par A function $V\colon[a,b]\to \mathbb{R}$ is said to be a minor function of $f$ if $-V$ is a major function of $f$.

	\begin{lemma}\label{increasinglemma}
		If $U$ and $V$ are a major and a minor function of $f$ respectively, then $U-V$ is non-decreasing.
	\end{lemma}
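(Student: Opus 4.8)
The plan is to reduce the statement to two ingredients: a superadditivity estimate for one-sided lower Laplace derivates, and a monotonicity principle asserting that a continuous function with non-negative lower Laplace derivative is non-decreasing. First I would observe that $W:=U-V$ is continuous, since $U$ is continuous (being a major function of $f$) and $V$ is continuous (because $-V$ is a major function, hence continuous). Next I would record the elementary inequality
\[
\underline{LD}_1^{\pm}(g+h)(x)\ \geq\ \underline{LD}_1^{\pm}g(x)+\underline{LD}_1^{\pm}h(x),
\]
valid for continuous $g,h$ near $x$ whenever the two derivates on the right are $>-\infty$; it is nothing more than $\liminf_{s\to\infty}(a_s+b_s)\geq\liminf_{s\to\infty}a_s+\liminf_{s\to\infty}b_s$ applied to the two defining integrals. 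Since $\underline{LD}_1^{\pm}(-V)=-\overline{LD}_1^{\pm}V$, applying this with $g=U$, $h=-V$ gives $\underline{LD}_1^{\pm}W\geq\underline{LD}_1^{\pm}U-\overline{LD}_1^{\pm}V$ on $[a,b]$, the addition being legitimate because the defining conditions on the major functions $U$ and $-V$ keep both derivates on the right finite from below.

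Next I would feed in the definitions. Because $U$ is a major function of $f$ we have $\underline{LD}_1 U\geq f$ on $[a,b]$, and because $V$ is a minor function of $f$ we have $\overline{LD}_1 V\leq f$ on $[a,b]$ (equivalently $\underline{LD}_1(-V)\geq -f$, the defining condition on the major function $-V$). Hence, for every $x\in[a,b]$,
\[
\underline{LD}_1^{+}W(x)\geq f(x)-f(x)=0\quad\text{and}\quad\underline{LD}_1^{-}W(x)\geq f(x)-f(x)=0,
\]
so $\underline{LD}_1 W\geq 0$ on the whole of $[a,b]$.

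It remains to prove the monotonicity principle: a continuous $W\colon[a,b]\to\mathbb{R}$ with $\underline{LD}_1 W\geq 0$ everywhere on $[a,b]$ is non-decreasing. I expect this to be the only non-routine step (and if a monotonicity theorem of this type is already available from the cited work on the Laplace derivative, the argument ends here). To prove it directly I would first perturb: set $W_{\varepsilon}(x)=W(x)+\varepsilon x$ for $\varepsilon>0$; a direct evaluation of the defining limits shows the Laplace derivative of $x\mapsto\varepsilon x$ is identically $\varepsilon$, so by the superadditivity above $\underline{LD}_1 W_{\varepsilon}\geq\varepsilon>0$ on $[a,b]$. Then I would run a rising-sun argument: if $W_{\varepsilon}(c)>W_{\varepsilon}(d)$ for some $c<d$ in $[a,b]$, put $x_0=\sup\{x\in[c,d]:W_{\varepsilon}(x)\geq W_{\varepsilon}(c)\}$. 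By continuity $x_0<d$ and $W_{\varepsilon}(x_0)\geq W_{\varepsilon}(c)$, while $W_{\varepsilon}(x_0+t)<W_{\varepsilon}(c)\leq W_{\varepsilon}(x_0)$ for all $t\in(0,d-x_0)$. Taking $\delta=d-x_0$, the function $t\mapsto W_{\varepsilon}(x_0+t)-W_{\varepsilon}(x_0)$ is continuous on $[0,\delta]$, vanishes at $0$, and is strictly negative on $(0,\delta)$, so $s^{2}\int_{0}^{\delta}e^{-st}[W_{\varepsilon}(x_0+t)-W_{\varepsilon}(x_0)]\,dt<0$ for every $s>0$; hence $\overline{LD}_1^{+}W_{\varepsilon}(x_0)\leq 0$, and here Lemma \ref{lemma11} is precisely what guarantees this conclusion does not depend on the chosen $\delta$. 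This contradicts $\underline{LD}_1 W_{\varepsilon}(x_0)\geq\varepsilon>0$. Therefore each $W_{\varepsilon}$ is non-decreasing, and letting $\varepsilon\to0^{+}$ in the inequality $W(x)-W(y)\leq\varepsilon(y-x)$ (valid for $a\leq x\leq y\leq b$) shows that $W=U-V$ is non-decreasing. The main obstacle is this last principle; the crucial enabling fact is the $\delta$-independence from Lemma \ref{lemma11}, which lets us localise the Laplace-derivative computation to an interval on which the increment of $W_{\varepsilon}$ has a fixed sign.
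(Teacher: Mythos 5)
Your proposal is correct and follows the same skeleton as the paper's proof: continuity of $U-V$, the superadditivity estimate $\underline{LD}_1(U-V)\geq \underline{LD}_1U-\overline{LD}_1V\geq 0$ (with the same care about avoiding $\infty-\infty$, which the conditions $\underline{LD}_1U>-\infty$ and $\underline{LD}_1(-V)>-\infty$ guarantee), and then a monotonicity principle for the lower Laplace derivate. The only divergence is at that last step: the paper simply invokes Theorem 19 of the cited work on the Laplace derivative, whereas you supply a self-contained proof via the perturbation $W_\varepsilon(x)=W(x)+\varepsilon x$ and a rising-sun argument at $x_0=\sup\{x\in[c,d]:W_\varepsilon(x)\geq W_\varepsilon(c)\}$. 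That argument is sound: the increment $W_\varepsilon(x_0+t)-W_\varepsilon(x_0)$ is nonpositive on $(0,d-x_0)$, so the defining integral is nonpositive for every $s$, giving $\overline{LD}_1^{+}W_\varepsilon(x_0)\leq 0$, and Lemma \ref{lemma11} is correctly used to make this $\delta$-independent and contradict $\underline{LD}_1W_\varepsilon\geq\varepsilon$. What your version buys is independence from the external monotonicity theorem, at the cost of length; the paper's version is shorter but rests entirely on the cited result.
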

	\begin{proof}
		As $U-V$ is continuous and $\underline{LD_{1}}(U-V)\geqslant \underline{LD_{1}}U - \overline{LD_{1}}V \geqslant 0$ on $[a,b]$, from Theorem $19$ of \cite{OLD}, we get $U-V$ is non-decreasing.
	\end{proof}
	\par From now on, for any function $F\colon[a,b]\to\mathbb{R}$, we use the symbol $F_{a}^{b}$ to denote the difference $F(b)-F(a)$.

	\begin{definition}
		Let $f\colon[a,b]\to \mathbb{R}$. By Lemma \ref{increasinglemma}, we have
		\begin{align*}
		\sup \left\{ V_{a}^{b}\mid V\, \mbox{is a minor function of }\, f  \right\}\leqslant \inf \left\{ U_{a}^{b}\mid U\, \mbox{is a major function of }\, f  \right\}.
		\end{align*}
		If the equality holds with a finite value, then we say $f$ is Laplace integrable on $[a,b]$. In this case, the common value is said to be the Laplace integral of $f$ on $[a,b]$ and is denoted by $\int_{a}^{b}f$. Moreover, we denote the set of all Laplace integrable functions on $[a,b]$ by $\mathcal{LP}[a,b]$.
	\end{definition}

	\par A continuous function $F\colon[a,b]\to\mathbb{R}$ is said to be a primitive of $f$, where $f\in\mathcal{LP}[a,b]$, if $F(x)-F(a)=\int_{a}^{x}f$ for $x\in [a,b]$.

	\begin{corollary}\label{corollary_1}
		Let $f\colon[a,b]\to\mathbb{R}$. Then $f\in \mathcal{LP}[a,b]$ if and only if for any $\epsilon> 0$ there exist a major function $U$ and a minor function $V$ of $f$ such that
		\[
		U_{a}^{b} - V_{a}^{b}< \epsilon.
		\]
	\end{corollary}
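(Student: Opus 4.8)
The plan is to prove both implications directly from the definition of Laplace integrability, using only the inequality established in Lemma \ref{increasinglemma} together with the elementary characterisations of supremum and infimum; the argument is purely order-theoretic, so I expect no real obstacle.

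For the forward direction, suppose $f\in\mathcal{LP}[a,b]$, so that $\sup\{V_{a}^{b}\mid V\text{ minor of }f\}=\inf\{U_{a}^{b}\mid U\text{ major of }f\}=\int_{a}^{b}f$, a finite number. Given $\epsilon>0$, I would use the defining property of the infimum to select a major function $U$ with $U_{a}^{b}<\int_{a}^{b}f+\epsilon/2$, and the defining property of the supremum to select a minor function $V$ with $V_{a}^{b}>\int_{a}^{b}f-\epsilon/2$. Subtracting these two inequalities gives $U_{a}^{b}-V_{a}^{b}<\epsilon$, as required.

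For the converse, note first that the hypothesis guarantees the existence of at least one major function and at least one minor function of $f$, so the sets $\{U_{a}^{b}\}$ and $\{V_{a}^{b}\}$ are nonempty; by Lemma \ref{increasinglemma} every $V_{a}^{b}$ lies below every $U_{a}^{b}$, hence $s:=\sup\{V_{a}^{b}\}$ and $i:=\inf\{U_{a}^{b}\}$ are finite real numbers with $s\leqslant i$. Now fix $\epsilon>0$ and choose a major $U$ and a minor $V$ with $U_{a}^{b}-V_{a}^{b}<\epsilon$; since $i\leqslant U_{a}^{b}$ and $V_{a}^{b}\leqslant s$, we obtain $0\leqslant i-s\leqslant U_{a}^{b}-V_{a}^{b}<\epsilon$. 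As $\epsilon>0$ was arbitrary, $i=s$, and this common finite value is by definition $\int_{a}^{b}f$; therefore $f\in\mathcal{LP}[a,b]$.

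The only point requiring a moment of care is the finiteness of $s$ and $i$ in the converse, but this is immediate from Lemma \ref{increasinglemma} once both families are known to be nonempty. Everything else is routine manipulation of inequalities, so the proof should be short.
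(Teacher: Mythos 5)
Your proof is correct and is exactly the routine sup/inf argument the paper has in mind: the paper states this corollary without proof, treating it as immediate from the definition together with Lemma \ref{increasinglemma}, and your two directions (approximation properties of $\sup$ and $\inf$ for the forward implication; nonemptiness, finiteness and the squeeze $0\leqslant i-s<\epsilon$ for the converse) fill in precisely the intended details.
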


	\par By Theorem 2.18.3 of \cite[p.~167]{HOD} it is evident that $\underline{D}f\leqslant \underline{LD_{1}}f\leqslant \overline{LD_{1}}f\leqslant \overline{D}f$, where $\underline{D}f$ and $\overline{D}f$ are Dini derivates. It proves that Laplace integral includes Perron integral. The converse is not valid (see Section \ref{NeWexample}).

	\begin{definition}\label{exmajor}
		A continuous function $U\colon[a,b]\to \mathbb{R}$ is said to be ex-major function of $f\colon[a,b]\to \mathbb{R}$ if $\underline{LD}_{1}U\geqslant f$ a.e. on $[a,b]$ and $\underline{LD}_{1}U>-\infty$ nearly everywhere on $[a,b].$
	\end{definition}
	
	\par Definition of ex-minor function of $f\colon[a,b]\to \mathbb{R}$ is analogous.

	\begin{definition}\label{exlaplaceintegral}
		A function $f\colon[a,b]\to \mathbb{R}$ is said to be Laplace-ex integrable if 
		\begin{align*}
		-\infty&<\inf \left\{ U_{a}^{b} \,\,|\,\, U\,\, \mbox{is an ex-major function of }\,\, f  \right\}\\
		&=\sup \left\{ V_{a}^{b} \,\,|\,\, V\,\, \mbox{is an ex-minor function of }\,\, f  \right\}<\infty.
		\end{align*}
		Furthermore, the class of all Laplace-ex  integrable functions on $[a,b]$ will be denoted by $\mathcal{LP}_{ex}[a,b]$.
	\end{definition}

	\par With the help of next three Lemmas, it can be proved that $\mathcal{LP}[a,b]=\mathcal{LP}_{ex}[a,b]$. However, we omit the proofs as they are quite similar to the proofs of the corresponding results for Perron integral (Lemma 8.22--8.24 and Theorem 8.25 of \cite{RSG}).

	\begin{lemma}\label{corrlm1}
		Let $W\colon[a,b]\to \mathbb{R}$ be continuous, and $c\in[a,b]$, and let $\epsilon >0.$ Then there exists a non-decreasing continuous function $\psi\colon[a,b]\to \mathbb{R}$ and a $\delta(>0)$ such that $\psi(a)=0,$ $\psi(b)\leqslant \epsilon$ and for all $(s,\delta)\in [0,\infty)\times(0,\infty)$,
		\[
		s^{2}\int_{0}^{\delta}e^{-st}[W(c+t)-W(c)+\psi(c+t)-\psi(c)]\,dt\geqslant 0
		\]
		and
		\[
		(-s^{2})\int_{0}^{\delta}e^{-st}[W(c-t)-W(c)+\psi(c-t)-\psi(c)]\,dt\geqslant 0.
		\]
	\end{lemma}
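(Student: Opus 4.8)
The plan is to construct $\psi$ explicitly from the modulus of continuity of $W$, arranging that $W+\psi$ acquires a right–hand minimum and a left–hand maximum at $c$; once the bracketed expressions in the two displays are sign–definite on a whole one–sided neighbourhood of $c$, both inequalities become immediate for \emph{every} $s\ge 0$.

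First, since $W$ is continuous on the compact interval $[a,b]$ it is uniformly continuous; write $\omega(r)=\sup\{|W(y)-W(z)|:y,z\in[a,b],\ |y-z|\le r\}$, so $\omega(r)\to 0$ as $r\to 0^{+}$. Fix $\delta>0$ with $2\omega(\delta)\le\epsilon$, small enough that the relevant side(s) of $c$ lie in $[a,b]$ (when $c=a$ only the first inequality is meaningful and when $c=b$ only the second, and those cases are obtained by deleting the inapplicable part of the construction). Set
\[
h(x)=\sup_{x\le y\le c}\bigl(W(y)-W(c)\bigr)^{+}\quad(c-\delta\le x\le c),\qquad
k(x)=\sup_{c\le y\le x}\bigl(W(c)-W(y)\bigr)^{+}\quad(c\le x\le c+\delta),
\]
where $u^{+}=\max\{u,0\}$. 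By uniform continuity of $W$ both are continuous, $h$ is nonincreasing, $k$ is nondecreasing, $h(c)=k(c)=0$, and $h(c-\delta)\le\omega(\delta)$, $k(c+\delta)\le\omega(\delta)$. Now define
\[
\psi(x)=
\begin{cases}
0,& a\le x\le c-\delta,\\
h(c-\delta)-h(x),& c-\delta\le x\le c,\\
h(c-\delta)+k(x),& c\le x\le c+\delta,\\
h(c-\delta)+k(c+\delta),& c+\delta\le x\le b.
\end{cases}
\]
It is routine to check that $\psi$ is continuous, nondecreasing, $\psi(a)=0$, and $\psi(b)=h(c-\delta)+k(c+\delta)\le 2\omega(\delta)\le\epsilon$.

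The key point is the pointwise estimate. For $x\in[c-\delta,c]$,
\[
(W+\psi)(x)-(W+\psi)(c)=\bigl(W(x)-W(c)\bigr)-h(x)\le 0,
\]
since $h(x)\ge\bigl(W(x)-W(c)\bigr)^{+}\ge W(x)-W(c)$; and for $x\in[c,c+\delta]$,
\[
(W+\psi)(x)-(W+\psi)(c)=\bigl(W(x)-W(c)\bigr)+k(x)\ge 0,
\]
since $k(x)\ge\bigl(W(c)-W(x)\bigr)^{+}\ge W(c)-W(x)$. Hence, for every $s\ge 0$ and every $\delta'\in(0,\delta]$, the function $t\mapsto e^{-st}\bigl[(W+\psi)(c+t)-(W+\psi)(c)\bigr]$ is $\ge 0$ on $[0,\delta']$, so its integral is $\ge 0$, and multiplying by $s^{2}\ge 0$ gives the first inequality; likewise $t\mapsto e^{-st}\bigl[(W+\psi)(c-t)-(W+\psi)(c)\bigr]$ is $\le 0$ on $[0,\delta']$, so its integral is $\le 0$, and multiplying by $-s^{2}\le 0$ gives the second ($s=0$ being trivial). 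This is exactly the asserted conclusion, the $\delta$ appearing in the displays being the one fixed above (or any smaller positive number).

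I do not expect a genuine obstacle: the only steps needing a word of justification are the continuity of the auxiliary supremum functions $h$ and $k$ (which is precisely where uniform continuity of $W$ is used) and the degenerate cases $c=a$, $c=b$, where only one of the two inequalities is present. The one feature worth emphasising is that the construction delivers the integral inequalities for all $s\ge 0$, not merely asymptotically as $s\to\infty$, exactly because $W+\psi$ has been arranged to dominate (resp.\ be dominated by) its value at $c$ on an entire interval rather than only in an averaged sense.
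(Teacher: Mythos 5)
Your construction is correct: the running one-sided oscillation functions $h$ and $k$ make $W+\psi$ pointwise $\geqslant (W+\psi)(c)$ on $[c,c+\delta]$ and $\leqslant (W+\psi)(c)$ on $[c-\delta,c]$, so the integrands are sign-definite and both displayed inequalities hold for every $s\geqslant 0$. The paper itself omits the proof of this lemma, deferring to the corresponding Perron-integral result (Lemma 8.22 of Gordon), and your argument is exactly that standard monotone-envelope construction transplanted to the Laplace setting; you also correctly resolve the garbled quantification in the statement (the $\delta$ cannot simultaneously be chosen and range over all of $(0,\infty)$) by producing one $\delta$ that works for all $s\geqslant 0$ and all smaller $\delta'$, which is all that is needed to conclude $\underline{LD}_{1}^{\pm}(W+\psi)(c)\geqslant 0$ via Lemma 2.1.
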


	\begin{lemma}\label{corrlm2}
		Let $W\colon[a,b]\to \mathbb{R}$ be continuous, let $\epsilon>0,$ and suppose that $\underline{LD}_{1}W>-\infty$ nearly everywhere on $[a,b].$ Then there exists a continuous function $Y\colon[a,b]\to \mathbb{R}$ such that $\underline{LD}_{1}Y\geqslant \underline{LD}_{1}W,$ $\underline{LD}_{1}Y>-\infty$ on $[a,b]$ and $Y^{b}_{a}\leqslant W^{b}_{a}+\epsilon.$
	\end{lemma}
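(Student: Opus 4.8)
The plan is to adapt the classical ``correction-function'' argument used for the analogous statement about the Perron integral (Lemma 8.23 of \cite{RSG}): I would add to $W$ a small non-decreasing continuous perturbation whose only role is to repair the lower Laplace derivate at each of the countably many points where $\underline{LD}_{1}W=-\infty$, and Lemma \ref{corrlm1} is precisely the device that manufactures such a perturbation at one prescribed point.

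\emph{Construction.} Since $\underline{LD}_{1}W>-\infty$ nearly everywhere, there is a countable set $E=\{c_{1},c_{2},\dots\}\subseteq[a,b]$ off which $\underline{LD}_{1}W>-\infty$; if $E=\emptyset$ take $Y=W$. For each $k$, apply Lemma \ref{corrlm1} with the point $c=c_{k}$ and with $\epsilon/2^{k}$ in place of $\epsilon$, obtaining a non-decreasing continuous $\psi_{k}\colon[a,b]\to\mathbb{R}$ with $\psi_{k}(a)=0$, $\psi_{k}(b)\le\epsilon/2^{k}$, and
\[
s^{2}\!\int_{0}^{\delta}e^{-st}\big[W(c_{k}+t)-W(c_{k})+\psi_{k}(c_{k}+t)-\psi_{k}(c_{k})\big]\,dt\ge 0,
\]
\[
(-s^{2})\!\int_{0}^{\delta}e^{-st}\big[W(c_{k}-t)-W(c_{k})+\psi_{k}(c_{k}-t)-\psi_{k}(c_{k})\big]\,dt\ge 0
\]
for all $s,\delta>0$. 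Because $\psi_{k}$ is non-decreasing with $\psi_{k}(a)=0$ we have $0\le\psi_{k}\le\epsilon/2^{k}$, so $\Psi:=\sum_{k\ge 1}\psi_{k}$ converges uniformly by the Weierstrass $M$-test; hence $\Psi$ is continuous, and it is non-decreasing, being a pointwise limit of the non-decreasing partial sums. I set $Y:=W+\Psi$. Then $Y$ is continuous and $Y_{a}^{b}=W_{a}^{b}+\sum_{k}\psi_{k}(b)\le W_{a}^{b}+\epsilon$.

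\emph{Verifying the derivate conditions.} First, for any non-decreasing continuous $g$, both expressions $s^{2}\int_{0}^{\delta}e^{-st}[g(x+t)-g(x)]\,dt$ and $(-s^{2})\int_{0}^{\delta}e^{-st}[g(x-t)-g(x)]\,dt$ are $\ge 0$ for every $s,\delta>0$, whence $\underline{LD}_{1}g\ge 0$ on $[a,b]$; this applies to $g=\Psi$ and to each tail $R_{m}:=\sum_{k\ne m}\psi_{k}$. Next, from linearity of the integral in the defining means one has the super-additivity $\underline{LD}_{1}(g_{1}+g_{2})\ge\underline{LD}_{1}g_{1}+\underline{LD}_{1}g_{2}$ whenever the right-hand side is not of the form $\infty-\infty$. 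Consequently, at every $x$ with $\underline{LD}_{1}W(x)>-\infty$ we get $\underline{LD}_{1}Y(x)\ge\underline{LD}_{1}W(x)+\underline{LD}_{1}\Psi(x)\ge\underline{LD}_{1}W(x)$, while at the exceptional points the inequality $\underline{LD}_{1}Y\ge\underline{LD}_{1}W$ is trivial; this gives $\underline{LD}_{1}Y\ge\underline{LD}_{1}W$ on $[a,b]$, and in particular $\underline{LD}_{1}Y>-\infty$ at every $x\notin E$. Finally, fix $c_{m}\in E$ and write $Y=(W+\psi_{m})+R_{m}$. The two inequalities in the construction say precisely that $s^{2}\int_{0}^{\delta}e^{-st}[(W+\psi_{m})(c_{m}+t)-(W+\psi_{m})(c_{m})]\,dt\ge 0$, and the analogous expression for the left side is $\ge 0$, for all $s,\delta$; letting $s\to\infty$ yields $\underline{LD}_{1}^{+}(W+\psi_{m})(c_{m})\ge 0$ and $\underline{LD}_{1}^{-}(W+\psi_{m})(c_{m})\ge 0$, hence $\underline{LD}_{1}(W+\psi_{m})(c_{m})\ge 0$. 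Super-additivity together with $\underline{LD}_{1}R_{m}(c_{m})\ge 0$ then gives $\underline{LD}_{1}Y(c_{m})\ge 0>-\infty$, completing the verification.

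\emph{Where the difficulty lies.} Essentially all of the content is in Lemma \ref{corrlm1}; granting it, the present proof is bookkeeping — checking that $\Psi$ and each $R_{m}$ are continuous, non-decreasing, and small in sup-norm (the geometric weights $\epsilon/2^{k}$ take care of this) and keeping track of which $\psi_{k}$ is assigned to which $c_{k}$. The single conceptual point worth isolating is that adjoining a non-decreasing function can never create a point where the lower Laplace derivate equals $-\infty$, since $\underline{LD}_{1}$ of a non-decreasing function is $\ge 0$; thus the correction does not merely shrink the bad set, it empties it.
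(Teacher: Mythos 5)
Your argument is correct and is exactly the standard construction the paper itself points to: the paper omits this proof, deferring to the analogous Perron-integral result (Lemma 8.23 of \cite{RSG}), and your adaptation — summing the correction functions $\psi_k$ from Lemma \ref{corrlm1} with geometric weights $\epsilon/2^{k}$ and using superadditivity of the lower Laplace derivate together with $\underline{LD}_{1}g\geqslant 0$ for non-decreasing $g$ — is precisely that argument transplanted to the Laplace setting. No gaps.
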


	\begin{lemma}\label{corrlm3}
		Let $f\colon[a,b]\to \mathbb{R}$. If $W$ is ex-major function of $f$ on $[a,b]$ and $\epsilon>0$, then there exists a major function $U$ of $f$ on $[a,b]$ such that $U_{a}^{b}<W_{a}^{b}+\epsilon$.
	\end{lemma}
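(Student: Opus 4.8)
The plan is to follow the pattern of the proof of the corresponding statement for the Perron integral (Lemma 8.24 of \cite{RSG}) and to remove, in two successive steps, the two respects in which an ex-major function $W$ of $f$ falls short of being a major function: the hypothesis $\underline{LD}_1W>-\infty$ holds only nearly everywhere, and the hypothesis $\underline{LD}_1W\geqslant f$ holds only almost everywhere. First I would apply Lemma \ref{corrlm2} to $W$ with $\epsilon/2$ in place of $\epsilon$, obtaining a continuous $Y\colon[a,b]\to\mathbb{R}$ with $\underline{LD}_1Y\geqslant\underline{LD}_1W$ on $[a,b]$, with $\underline{LD}_1Y>-\infty$ everywhere on $[a,b]$, and with $Y_a^b\leqslant W_a^b+\epsilon/2$. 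Since $W$ is ex-major we have $\underline{LD}_1W\geqslant f$ a.e., hence $\underline{LD}_1Y\geqslant f$ a.e., so the set $E=\{x\in[a,b]:\underline{LD}_1Y(x)<f(x)\}$ is a null set, and outside $E$ the function $Y$ already has the defining properties of a major function.

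The second step is to add to $Y$ a continuous nondecreasing function whose lower derivative is $+\infty$ on $E$ and whose total increase is as small as we wish. Since $|E|=0$, for each $n\in\mathbb{N}$ I would choose an open set $G_n\supseteq E$ with $|G_n|<\epsilon\,2^{-n-1}$ and define $\mu(x)=\sum_{n=1}^{\infty}|G_n\cap(a,x)|$ for $x\in[a,b]$. Each summand $x\mapsto|G_n\cap(a,x)|$ is nonnegative, nondecreasing and $1$-Lipschitz, and the series converges uniformly (being dominated by $\sum_n|G_n|<\epsilon/2$); hence $\mu$ is continuous and nondecreasing with $\mu(a)=0$ and $\mu_a^b=\mu(b)<\epsilon/2$. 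If $x\in E$ then $x\in G_n$ for every $n$, so given $N\in\mathbb{N}$ there is $\eta>0$ with $(x-\eta,x+\eta)\subseteq G_n$ for $n=1,\dots,N$, whence $\mu(x+h)-\mu(x)\geqslant Nh$ and $\mu(x)-\mu(x-h)\geqslant Nh$ for all $0<h<\eta$; letting $h\to 0$ and then $N\to\infty$ gives $\underline{D}\mu(x)=+\infty$, and therefore $\underline{LD}_1\mu(x)=+\infty$ by the relation $\underline{D}\mu\leqslant\underline{LD}_1\mu$ recalled after Corollary \ref{corollary_1}. Also, since $\mu(x+t)-\mu(x)\geqslant 0$ and $\mu(x-t)-\mu(x)\leqslant 0$ for $t\geqslant 0$, both one-sided lower Laplace derivates of $\mu$ are nonnegative, so $\underline{LD}_1\mu\geqslant 0$ on all of $[a,b]$.

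Finally I would set $U=Y+\mu$, which is continuous, and check that it is a major function of $f$ with the required estimate. The one thing to verify is the superadditivity $\underline{LD}_1(Y+\mu)(x)\geqslant\underline{LD}_1Y(x)+\underline{LD}_1\mu(x)$, which follows by applying the superadditivity of $\liminf$ to the integrals defining the one-sided derivates; no $\infty-\infty$ indeterminacy arises because $\underline{LD}_1Y>-\infty$ on $[a,b]$ while $\underline{LD}_1\mu\geqslant 0$. For $x\in[a,b]\setminus E$ this gives $\underline{LD}_1U(x)\geqslant f(x)+0=f(x)$ and $\underline{LD}_1U(x)\geqslant\underline{LD}_1Y(x)>-\infty$; for $x\in E$ it gives $\underline{LD}_1U(x)\geqslant\underline{LD}_1Y(x)+\infty=+\infty$. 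Thus $\underline{LD}_1U\geqslant f$ and $\underline{LD}_1U>-\infty$ on all of $[a,b]$, so $U$ is a major function of $f$, and $U_a^b=Y_a^b+\mu_a^b\leqslant W_a^b+\epsilon/2+\mu_a^b<W_a^b+\epsilon$. I expect the main obstacle to be precisely the handling of $+\infty$ in the superadditivity step — which is why it is essential to pass first to $Y$, so that $\underline{LD}_1Y$ is finite everywhere before $\mu$ is added — whereas the construction of $\mu$ and the verification that $\underline{D}\mu=+\infty$ on $E$ are routine.
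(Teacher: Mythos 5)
Your proposal is correct and is essentially the argument the paper intends: the paper omits the proof of Lemma \ref{corrlm3} and refers the reader to the corresponding Perron-integral result (Lemma 8.24 of \cite{RSG}), and your two-step reduction --- first invoking Lemma \ref{corrlm2} to secure $\underline{LD}_{1}Y>-\infty$ everywhere, then adding a small continuous nondecreasing $\mu$ with $\underline{D}\mu=+\infty$ on the null exceptional set and using $\underline{D}\mu\leqslant\underline{LD}_{1}\mu$ --- is exactly that argument transplanted to Laplace derivates. The superadditivity step, with the observation that no $\infty-\infty$ indeterminacy can arise because $\underline{LD}_{1}Y$ is everywhere finite from below and $\underline{LD}_{1}\mu\geqslant 0$, is handled correctly.
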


	\begin{theorem}\label{exequivalent}
		Let $f\colon[a,b]\to \mathbb{R}$. Then $f\in \mathcal{LP}[a,b]$ iff $f\in \mathcal{LP}_{ex}[a,b]$.
	\end{theorem}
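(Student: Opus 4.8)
The plan is to compare the four extremal quantities naturally attached to $f$. Writing, as extended reals,
\[
\overline{I}=\inf\{U_{a}^{b}\mid U\text{ is a major function of }f\},\qquad \underline{I}=\sup\{V_{a}^{b}\mid V\text{ is a minor function of }f\},
\]
and letting $\overline{I}_{ex}$ and $\underline{I}_{ex}$ be defined the same way with ``major'' and ``minor'' replaced by ``ex-major'' and ``ex-minor'', I would first record the two easy inequalities. Every major function of $f$ is an ex-major function of $f$: the requirement $\underline{LD}_{1}U\geqslant f$ on all of $[a,b]$ certainly holds a.e., and $\underline{LD}_{1}U>-\infty$ on all of $[a,b]$ certainly holds nearly everywhere. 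Symmetrically, every minor function of $f$ is an ex-minor function of $f$. Hence $\overline{I}_{ex}\leqslant\overline{I}$ and $\underline{I}\leqslant\underline{I}_{ex}$. The whole theorem will follow once the reverse inequalities $\overline{I}\leqslant\overline{I}_{ex}$ and $\underline{I}_{ex}\leqslant\underline{I}$ are established, giving $\overline{I}=\overline{I}_{ex}$ and $\underline{I}=\underline{I}_{ex}$: indeed ``$f\in\mathcal{LP}[a,b]$'' is by definition the assertion that $\overline{I}=\underline{I}$ and this value is finite, ``$f\in\mathcal{LP}_{ex}[a,b]$'' is by definition the assertion that $\overline{I}_{ex}=\underline{I}_{ex}$ and this value is finite, and the two are then literally the same statement; moreover the value of $\int_{a}^{b}f$ computed from either definition is this common number.

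For $\overline{I}\leqslant\overline{I}_{ex}$ I would use Lemma \ref{corrlm3} directly: given an arbitrary ex-major function $W$ of $f$ and $\epsilon>0$, it produces a major function $U$ of $f$ with $U_{a}^{b}<W_{a}^{b}+\epsilon$, so $\overline{I}\leqslant U_{a}^{b}<W_{a}^{b}+\epsilon$; letting $\epsilon\downarrow 0$ gives $\overline{I}\leqslant W_{a}^{b}$, and taking the infimum over all ex-major $W$ gives $\overline{I}\leqslant\overline{I}_{ex}$. For the twin inequality $\underline{I}_{ex}\leqslant\underline{I}$ I would run the same argument for $-f$, using that replacing $f$ by $-f$ interchanges (ex-)major with (ex-)minor functions and negates every increment $F_{a}^{b}$: this converts Lemma \ref{corrlm3} into the statement that for every ex-minor function $W'$ of $f$ and $\epsilon>0$ there is a minor function $V$ of $f$ with $V_{a}^{b}>(W')_{a}^{b}-\epsilon$, whence $\underline{I}\geqslant(W')_{a}^{b}$ for every ex-minor $W'$, i.e. $\underline{I}_{ex}\leqslant\underline{I}$. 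Combining the four inequalities completes the proof.

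At the level of Theorem \ref{exequivalent} itself there is essentially no obstacle: the argument is pure bookkeeping with infima and suprema, and the only points needing a little care are that $\overline{I},\underline{I},\overline{I}_{ex},\underline{I}_{ex}$ should be handled as extended reals (so that the coincidence of the two finiteness conditions is part of the conclusion), and that the minor-side inequality is obtained by the reflection $f\mapsto-f$ rather than by a separate direct argument. The real content lies in Lemmas \ref{corrlm1}--\ref{corrlm3}, which the authors omit as routine analogues of Lemmas 8.22--8.24 of \cite{RSG}: Lemma \ref{corrlm1} manufactures a small non-decreasing correction $\psi$ forcing the relevant one-sided Laplace difference quotients to be nonnegative at a prescribed point; Lemma \ref{corrlm2} upgrades ``$\underline{LD}_{1}W>-\infty$ nearly everywhere'' to ``everywhere'' at the cost of an arbitrarily small change in $W_{a}^{b}$; and Lemma \ref{corrlm3} assembles these, handling the exceptional null set where $\underline{LD}_{1}W\geqslant f$ may fail, to turn an ex-major function into a genuine major function. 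If anything is the ``hard part'', it is Lemma \ref{corrlm1}, where one must keep the correction term non-decreasing while still controlling both one-sided Laplace integrals simultaneously.
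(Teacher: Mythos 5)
Your proposal is correct and follows exactly the route the paper intends: the paper omits the proof, pointing to Lemmas \ref{corrlm1}--\ref{corrlm3} as the analogues of Lemmas 8.22--8.24 of \cite{RSG}, and the deduction of Theorem \ref{exequivalent} from Lemma \ref{corrlm3} (applied to $f$ for the major side and to $-f$ for the minor side, so that the ex-infimum and ex-supremum coincide with the ordinary ones) is precisely the standard bookkeeping you describe. Your remarks on the extended-real handling of the four quantities and on where the actual analytic content sits are accurate but do not constitute a departure from the paper's argument.
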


	\section{Construction of a Laplace integrable function which is not Perron integrable}\label{NeWexample}

	\par Here we construct a function $f\in\mathcal{LP}[0,1]\setminus \mathsf{P}[0,1]$, where $\mathsf{P}[a,b]$ denotes the set of all Perron integrable functions defined over $[a,b]$. For this, we need to define the Smith-Volterra-Cantor set ($\textup{SVC(4)}$), which is given below.

	\begin{definition}[\bf SVC(4)]\label{Definition1}
		Let $S_{0}=[0,1]$. If
		\begin{align}\label{component intervals}
		S_{n-1}= \bigcup_{k}[a_{k},b_{k}]\qquad\text{for $n\geqslant 1$},
		\end{align}
		define
		\begin{align*}
		S_{n}=\bigcup_{k}\left(\left[a_{k},\frac{a_{k}+b_{k}}{2}-\frac{1}{2^{2n+1}}\right]\cup\left[\frac{a_{k}+b_{k}}{2}+\frac{1}{2^{2n+1}},b_{k}\right]\right).
		\end{align*}
		Then the set $S=\cap_{n\in\mathbb{N}}S_{n}$ is called the Smith-Volterra-Cantor set and denoted by $\textup{SVC(4)}$. 
	\end{definition}

	\par Note that the set $S_{n}$ is created from $S_{n-1}$ by removing intervals of length $1/4^{n}$. If we call the intervals $[a_{k},b_{k}]$ in \eqref{component intervals} the component intervals of $S_{n-1}$, then by a straightforward induction it can be proved that the length of each component interval of $S_{n}$ is $(2^{n}+1)/(2^{2n+1})$ for $n\geqslant 0$. The set $S$ is a Cantor-like set with $\lambda(S)=1/2$, where $\lambda$ stands for the Lebesgue measure.
	\par Let $\mathcal{G}_{n}^{n+k}$ be the set of all intervals removed from $[0,1]$ to construct $S_{n+k}$ from $S_{n}$, where $k$ is a positive integer. Then it is evident that $\mathcal{G}=\cup_{n\in \mathbb{N}}\mathcal{G}_{n}^{n+k}$ is the collection of contiguous intervals of $S$ in $[0,1]$.

	\begin{lemma}\label{Lemma1}
		Let $a$ be an element of $S$. Then there exists a sequence of intervals $\left\{\left(a_{n_{k}},b_{n_{k}}\right)\right\}_{k\geqslant 2}$ in $\mathcal{G}$ such that for all $k\geqslant 2$, $\left(a_{n_{k}},b_{n_{k}}\right)\in \mathcal{G}_{n_{k}}^{2n_{k}}$ and either
		\begin{align}
		\left(a_{n_{k}},b_{n_{k}}\right)&\subseteq \left[a+\frac{1}{4^{n_{k}+1}} , a+\frac{5}{4^{n_{k}+1}}  \right]\label{Lemma1a}
		\end{align}
		or,
		\begin{align}
		\left(a_{n_{k}},b_{n_{k}}\right)&\subseteq \left[a-\frac{5}{4^{n_{k}+1}},a-\frac{1}{4^{n_{k}+1}}  \right].\label{Lemma1b}
		\end{align}
	\end{lemma}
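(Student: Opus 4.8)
The plan is to exploit the self‑similar structure of $\textup{SVC(4)}$ and the precise length data established just before the lemma. Fix $a\in S$. For each $n\geqslant 1$, $a$ lies in exactly one component interval $[\alpha_n,\beta_n]$ of $S_n$, of length $\ell_n=(2^n+1)/2^{2n+1}$. Passing from $S_n$ to $S_{n+1}$ removes the open middle interval of $[\alpha_n,\beta_n]$, which has length $1/4^{n+1}$ and is centred at the midpoint $m_n=(\alpha_n+\beta_n)/2$; call it $(p_n,q_n)$, so $p_n=m_n-\tfrac12\cdot 4^{-(n+1)}$, $q_n=m_n+\tfrac12\cdot 4^{-(n+1)}$. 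Since $a\in S_{n+1}$, $a$ lies to the left of $p_n$ or to the right of $q_n$; say the latter, so $a\geqslant q_n$ (the other case is symmetric and will produce alternative \eqref{Lemma1b}).

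Next I would locate $a$ inside the right half $[q_n,\beta_n]$, which is a component interval of $S_{n+1}$, and iterate. Continuing to subdivide, at each later stage $n+j$ one of the removed middle intervals sits between $a$ and the far endpoint. The key quantitative point is to choose the stage $j$ so that the removed interval $(p_{n+j},q_{n+j})\in\mathcal G_{n+j}^{n+j+1}\subseteq\mathcal G$ lands inside the prescribed window $[a+4^{-(n_k+1)},\,a+5\cdot 4^{-(n_k+1)}]$ (taking $n_k=n+j$). Here I would estimate: the distance from $a$ to the near endpoint $q_{n+j}$ of that gap is at most the length of the component interval of $S_{n+j}$ containing $a$ that got split, which is $\ell_{n+j}=(2^{n+j}+1)/2^{2(n+j)+1}\le 4^{-(n+j)}$ roughly (more precisely $\ell_m\le 2^{-m}$ for large $m$, but one wants the bound in terms of $4^{-m}$); and the gap itself has length $4^{-(n+j+1)}$. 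A careful bookkeeping of these two estimates — near endpoint within $4^{-(n_k+1)}$ of $a$, gap length $4^{-(n_k+1)}$ — forces the whole gap into the interval $[a+4^{-(n_k+1)},a+5\cdot 4^{-(n_k+1)}]$, giving \eqref{Lemma1a}. One must also check membership $(a_{n_k},b_{n_k})\in\mathcal G_{n_k}^{2n_k}$ rather than merely $\mathcal G_{n_k}^{n_k+1}$; this is arranged by grouping: the intervals removed between stages $n_k$ and $2n_k$ include the one just described, and by going deep enough (replacing the single step $n+j\to n+j+1$ by the block up to stage $2n_k$) one stays in $\mathcal G_{n_k}^{2n_k}$. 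Finally, since at infinitely many stages we are forced strictly to one side, we extract an infinite subsequence $\{n_k\}_{k\ge2}$ with $n_k\to\infty$ realising \eqref{Lemma1a} or \eqref{Lemma1b}.

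The main obstacle is the arithmetic of the windows: one has to reconcile the two natural scales present — the component‑interval length $(2^m+1)/2^{2m+1}$, which behaves like $2^{-m}$, and the gap length $4^{-m}$, which is $2^{-2m}$ — and show that after descending through a controlled number of stages (a bounded multiple of $m$, hence still within $\mathcal G_{n_k}^{2n_k}$ once $n_k$ is chosen appropriately) the relevant removed interval is trapped in the asymmetric window $[a+4^{-(n_k+1)},a+5\cdot 4^{-(n_k+1)}]$. Concretely, if $a$ lies in a stage‑$m$ component interval $[\alpha_m,\beta_m]$ with, say, $a$ in its right half after the stage‑$m$ cut, the stage‑$m$ gap $(p_m,q_m)$ satisfies $q_m\le a$ and $a-q_m\le \beta_m-q_m=\tfrac12(\ell_m-4^{-(m+1)})$; the left endpoint of the next gap to the right of $a$ is then within a further $\ell_{m+1}+\ell_{m+2}+\cdots$ of $a$, a geometric tail comparable to $4^{-(m+1)}$, and its length is $4^{-(m+1)}$ — so for $m=n_k$ large enough the inclusion \eqref{Lemma1a} holds. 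I would carry this estimate out explicitly for one side, remark that the other side is identical after reflecting $t\mapsto -t$, and then note that infinitely many stages must force a definite side (otherwise $a$ would be the common midpoint of all stages, impossible since the midpoints are removed), completing the extraction of the sequence.
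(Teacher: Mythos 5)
Your proposal correctly identifies the geometric setup (the nested components containing $a$, the left/right dichotomy, the need to descend from stage $n_k$ to stage $2n_k$, and the extraction of an infinite subsequence for one side), but the central quantitative step --- actually placing a removed interval of $\mathcal{G}_{n_k}^{2n_k}$ inside the window $\left[a+4^{-(n_k+1)},\,a+5\cdot 4^{-(n_k+1)}\right]$ --- is not established, and the estimates you sketch for it do not work. First, tracking ``the next gap to the right of $a$'' gives no lower bound on its distance from $a$: the point $a$ can be the left endpoint $p$ of a removed interval $(p,q)$ (endpoints of removed intervals belong to $S$), in which case that nearest gap occupies $(a,a+4^{-(n_k+1)})$ and lies entirely in the excluded zone to the left of the window, not inside it. Second, your upper bound is also off by an exponential order: the tail $\ell_{m+1}+\ell_{m+2}+\cdots$ with $\ell_j=(2^j+1)/2^{2j+1}\sim 2^{-j-1}$ is comparable to $2^{-m}$, not to $4^{-(m+1)}$, so even the claim that the candidate gap is within $O(4^{-(m+1)})$ of $a$ fails. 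There is no single ``canonical'' gap you can follow that is guaranteed to land in the asymmetric window.

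The paper closes this gap with a different, non-constructive mechanism that your proposal is missing. It first fixes the window $I_{0}=\left[a+4^{-(n+1)},a+5\cdot 4^{-(n+1)}\right]$ and notes it sits inside the stage-$n$ component $J_0$ (for at least one side of $a$, since $\ell_n\geqslant 10/4^{n+1}$), while $l(I_0)=1/4^{n}$ exceeds the length $(2^{2n}+1)/2^{4n+1}$ of the components of $S_{2n}$, so $I_0\not\subseteq S_{2n}$. It then assumes no member of $\mathcal{G}_n^{2n}$ is contained in $I_0$ and derives a contradiction by length counting: at each stage $n+j$ the removed middle $K_{j}$ of the current component either misses $I_{j}$ or merely clips it, so $I_{j+1}=I_{j}\setminus K_{j}$ stays connected and inside a single component; after $n$ stages $I_{n}$ lies in a component $J_{n}$ of $S_{2n}$, yet
\[
l(I_{n})\geqslant \frac{1}{4^{n}}-\sum_{i=1}^{n}\frac{1}{4^{n+i}}=\frac{2^{2n+1}+1}{3\times 2^{4n}}>\frac{2^{2n}+1}{2^{4n+1}}=l(J_{n}),
\]
a contradiction. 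This ``total removed length is too small to destroy $I_0$'' argument is the key idea you would need to add; without it (or an equally careful substitute), the trapping claim in your second paragraph remains unproved.
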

	\begin{proof}
		Note that $a\in S_{n}$ for all $n\in \mathbb{N}$. Moreover, for all $n\geqslant 2$,
		\begin{gather}
		\frac{2^{n}+1}{2^{2n+1}}\geqslant \frac{10}{4^{n+1}} \label{observeqn}
		\end{gather}
		and
		\begin{gather}
		\frac{4}{4^{n+1}}> \frac{2^{2n}+1}{2^{4n+1}}.\label{observeqn1}
		\end{gather}
		Now fix an $n\geqslant 2$. Let $J_{0}$ be the component interval of $S_{n}$ containing $a$. Then $\left[a,a+5/4^{n+1}\right]\subseteq S_{n}$ or, $\left[a-5/4^{n+1},a \right]\subseteq S_{n}$; otherwise, we will have 
		$$J_{0}\subset \left[a-5/4^{n+1},a+5/4^{n+1}\right],$$
		which contradicts \eqref{observeqn}. Suppose $\left[a,a+5/4^{n+1}\right]\subseteq J_{0}\subseteq S_{n}$.
		\par Let $I_{0}=\left[a+1/4^{n+1}, a+5/4^{n+1}\right]$. Then $I_{0}\subseteq J_{0}$ and the length of $I_{0}$, say $l(I_{0})$, is greater than the length of the component intervals of $S_{2n}$ (see \eqref{observeqn1}), which implies $I_{0}$ cannot be a subset of $S_{2n}$.
		\par Assume no member of $\mathcal{G}_{n}^{2n}$ is a subset of $I_{0}$. If $K_{0}$ is the open interval of length $1/4^{n+1}$, which is removed from $J_{0}$ to construct $S_{n+1}$ from $S_{n}$, then it will divide $J_{0}$  into two closed disjoint sub-intervals, say $J_{0,1}$ and $J_{0,2}$. Each of which is of length $(2^{n+1}+1)/2^{2n+3}$. According to our assumption $K_{0}\nsubseteq I_{0}$. Thus $I_{1}=I_{0}\setminus K_{0}$ is connected and either $I_{1}\subseteq J_{0,1}$ or $I_{1}\subseteq J_{0,2}$. Without any loss of generality assume $I_{1}\subseteq J_{0,2}$ and set $J_{1}=J_{0,2}$. Then $J_{1}$ is a component interval of $S_{n+1}$ such that $I_{1}\subseteq J_{1}$ and $I_{1}\subseteq I_{0}$. Having chosen $I_{0},I_{1},...,I_{n-1}$ and $J_{0},J_{1},...,J_{n-1}$ in the process of constructing $S_{2n-1}$ step-by-step from $S_{n}$, suppose $K_{n-1}$ be the open interval of length $1/4^{2n}$, which is removed from $J_{n-1}$ to construct $S_{2n}$ from $S_{2n-1}$. Then it will divide $J_{n-1}$ into two disjoint sub-intervals, say $J_{n-1,1}$ and $J_{n-1,2}$, each of which is of length $(2^{2n}+1)/2^{4n+1}$. Again by our assumption $K_{n-1}\nsubseteq I_{n-1}$. Thus $I_{n}=I_{n-1}\setminus K_{n-1}$ is connected and either $I_{n}\subseteq J_{n-1,1}$ or $I_{n}\subseteq J_{n-1,2}$. Suppose $I_{n}\subseteq J_{n-1,2}$ and set $J_{n}=J_{n-1,2}$. Then $J_{n}$ is a component interval of $S_{2n}$ such that $I_{n}\subseteq J_{n}$ and $I_{n}\subseteq I_{n-1}$. Thus
		\begin{eqnarray}\label{Eq1}
		l(I_{n})\leqslant l(J_{n}).
		\end{eqnarray}
		By the construction, we get $I_{n}=I_{0}\setminus\left(K_{0}\cup K_{1}\cup ...\cup K_{n-1}\right)$ and $K_{i}\cap K_{j}=\phi$ for $i\neq j$. Which implies
		\begin{align*}\label{Eq2}
		l(I_{n})&\geqslant l(I_{0})-\sum_{i=0}^{n-1}l(K_{i})\\
		&= \frac{1}{4^{n}}-\sum_{i=1}^{n}\frac{1}{4^{n+i}}= \frac{2^{2n+1}+1}{3\times 2^{4n}}>\frac{2^{2n}+1}{2^{4n+1}}=l(J_{n}),
		\end{align*}
		contradicting \eqref{Eq1}. Thus there is an interval $\left(a_{n},b_{n}\right)\in \mathcal{G}_{n}^{2n}$, which is a subset of $I_{0}$. Similarly assuming $\left[a-5/4^{n+1},a \right]\subseteq S_{n}$,
		we can prove that there exists an interval	$\left(a_{n},b_{n}\right)\in \mathcal{G}_{n}^{2n}$, which is a subset of $\left[a-5/4^{n+1},a-1/4^{n+1}\right]$. Now define,
		\begin{align*}
		&P^{+}_{a}=\left\{ n\in\mathbb{N}\mid n\geqslant 2\,\,\text{and}\,\,\left[a,a+\frac{5}{4^{n+1}}\right]\subseteq S_{n} \right\},\\
		&P^{-}_{a}=\left\{ n\in\mathbb{N}\mid n\geqslant 2\,\,\text{and}\,\,\left[a-\frac{5}{4^{n+1}},a \right]\subseteq S_{n} \right\}.
		\end{align*}
		Then at least one of them will be infinite, which completes the proof.
	\end{proof}

	\begin{lemma}\label{Corollary1}
		Let $a$ be an element of $S$ and let $\left\{\left(a_{n_{k}},b_{n_{k}}\right)\right\}_{k\geqslant 2}$ be a sequence in $\mathcal{G}$ such that $\left(a_{n_{k}},b_{n_{k}}\right)\in \mathcal{G}_{n_{k}}^{2n_{k}}$.
		\begin{enumerate}[\upshape A.]
			\item If $\left\{\left(a_{n_{k}},b_{n_{k}}\right)\right\}_{k\geqslant 2}$ satisfies \eqref{Lemma1a}, then $\exists$ a sequence of pairs $\left\{\left\{ u_{n_{k}},v_{n_{k}}\right\} \right\}_{k\geqslant 2}$ such that
			\begin{gather*}
			\left\{u_{n_{k}},v_{n_{k}}\right\}\subseteq \left(d_{n_{k}},c_{n_{k}}\right)\subseteq \left(a_{n_{k}},b_{n_{k}}\right),\\
			\left(u_{n_{k}}-a_{n_{k}}\right)^{-\frac{7}{4}}= \left(4l(k)+1\right)\frac{\pi}{2}\quad\text{and}\quad \left(v_{n_{k}}-a_{n_{k}}\right)^{-\frac{7}{4}}= 2l(k)\pi,
			\end{gather*} 
			where $c_{n_{k}}=(a_{n_{k}}+b_{n_{k}})/2\,$, $d_{n_{k}}=(a_{n_{k}}+c_{n_{k}})/2$ and $l(k)$ is an integer depending on $k$. \label{Corollary1a}
			\item If $\left\{\left(a_{n_{k}},b_{n_{k}}\right)\right\}_{k\geqslant 2}$ satisfies \eqref{Lemma1b}, then $\exists$ a sequence of pairs $\left\{\left\{ u_{n_{k}},v_{n_{k}}\right\} \right\}_{k\geqslant 2}$ such that
			\begin{gather*}
			\left\{u_{n_{k}},v_{n_{k}}\right\}\subseteq \left(c_{n_{k}},d_{n_{k}}\right)\subseteq \left(a_{n_{k}},b_{n_{k}}\right),\\
			\left(b_{n_{k}}-u_{n_{k}}\right)^{-\frac{7}{4}}= \left(4m(k)+1\right)\frac{\pi}{2}\quad\text{and}\quad\left(b_{n_{k}}-v_{n_{k}}\right)^{-\frac{7}{4}}= 2m(k)\pi,
			\end{gather*}
			where $c_{n_{k}}=(a_{n_{k}}+b_{n_{k}})/2\,$, $d_{n_{k}}=(b_{n_{k}}+c_{n_{k}})/2$ and $m(k)$ is an integer depending on $k$. \label{Corollary1b}
		\end{enumerate}
	\end{lemma}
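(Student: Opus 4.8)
The plan is to treat the two cases symmetrically, so I carry out A in detail and obtain B by reflecting the interval $(a_{n_k},b_{n_k})$ in its midpoint. Fix $k\geqslant 2$ and abbreviate $\alpha=a_{n_k}$, $\beta=b_{n_k}$, $L=\beta-\alpha$, $c=c_{n_k}=\tfrac{\alpha+\beta}{2}$ and $d=d_{n_k}=\tfrac{\alpha+c}{2}$, so that $\alpha<d<c<\beta$, $d-\alpha=L/4$, $c-\alpha=L/2$, and in particular $(d,c)\subseteq(\alpha,\beta)$. The object to look at is the map $g(t)=(t-\alpha)^{-7/4}$: it is continuous and strictly decreasing on $[d,c]$, hence it carries the open interval $(d,c)$ bijectively onto $(A,B)$, where
\[
A=(L/2)^{-7/4}=2^{7/4}L^{-7/4},\qquad B=(L/4)^{-7/4}=2^{7/2}L^{-7/4}.
\]

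The crux is to produce an integer $l=l(k)$ with $A<2l\pi$ and $2l\pi+\tfrac{\pi}{2}<B$; equivalently, an integer lying in the open interval $\bigl(\tfrac{A}{2\pi},\,\tfrac{B}{2\pi}-\tfrac14\bigr)$, which exists as soon as that interval has length exceeding $1$, i.e. as soon as $B-A>\tfrac{5\pi}{2}$. This is where the scale of $L$ is used: by \eqref{Lemma1a} the interval $(a_{n_k},b_{n_k})$ sits inside an interval of length $1/4^{n_k}$ (equivalently, being a member of $\mathcal{G}_{n_k}^{2n_k}$ it has length $1/4^{j}$ with $j\geqslant n_k+1$), so $L\leqslant 1/4^{n_k}$ and therefore $L^{-7/4}\geqslant 4^{7n_k/4}\geqslant 4^{7/2}=2^{7}$, using $n_k\geqslant 2$. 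Since also $2^{7/4}>2$, we obtain
\[
B-A=2^{7/4}\bigl(2^{7/4}-1\bigr)L^{-7/4}>2\cdot 2^{7}=2^{8}>\tfrac{5\pi}{2},
\]
which yields the required $l(k)$; note $l(k)\geqslant 1$ automatically, since $2l(k)\pi>A>0$.

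Finally, set $u_{n_k}=g^{-1}\!\bigl((4l(k)+1)\tfrac{\pi}{2}\bigr)$ and $v_{n_k}=g^{-1}\!\bigl(2l(k)\pi\bigr)$. Both $2l(k)\pi$ and $(4l(k)+1)\tfrac{\pi}{2}=2l(k)\pi+\tfrac{\pi}{2}$ lie strictly inside $(A,B)$ by the choice of $l(k)$, hence $u_{n_k},v_{n_k}\in(d,c)\subseteq(d_{n_k},c_{n_k})\subseteq(a_{n_k},b_{n_k})$, and by the definition of $g$ we have $(u_{n_k}-a_{n_k})^{-7/4}=(4l(k)+1)\tfrac{\pi}{2}$ and $(v_{n_k}-a_{n_k})^{-7/4}=2l(k)\pi$, which is assertion A. For B one repeats the argument verbatim with $g(t)=(b_{n_k}-t)^{-7/4}$ on $(c_{n_k},d_{n_k})$, where now $d_{n_k}=\tfrac{b_{n_k}+c_{n_k}}{2}$, so that $b_{n_k}-d_{n_k}=L/4$ and $b_{n_k}-c_{n_k}=L/2$; then $g$ is strictly increasing with the same range $(A,B)$, and \eqref{Lemma1b} supplies the same bound $L\leqslant 1/4^{n_k}$, so the identical estimate produces $m(k)$ and the pair $\{u_{n_k},v_{n_k}\}$.

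The only genuinely quantitative point — and the step I would verify most carefully — is the inequality $B-A>\tfrac{5\pi}{2}$ together with the ensuing claim that the open interval $\bigl(\tfrac{A}{2\pi},\tfrac{B}{2\pi}-\tfrac14\bigr)$ then contains an integer; everything else reduces to the monotonicity of $g$ and the elementary nesting of the quarter-intervals of $(a_{n_k},b_{n_k})$, both of which are immediate.
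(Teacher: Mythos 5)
Your proposal is correct and follows essentially the same route as the paper: both arguments observe that $x\mapsto(x-a_{n_k})^{-7/4}$ maps $(d_{n_k},c_{n_k})$ onto an interval whose length (bounded below via $b_{n_k}-a_{n_k}\leqslant 1/4^{n_k+1}$ and $n_k\geqslant 2$) far exceeds what is needed to contain both $2l\pi$ and $(4l+1)\pi/2$ for a common integer $l$, and then pull back by the intermediate value theorem. The only cosmetic difference is that you make explicit the threshold $B-A>5\pi/2$ and the integer-in-an-interval step, where the paper simply establishes the cruder bound $f(d_{n_k})-f(c_{n_k})>4\pi$ and invokes the intermediate value theorem directly.
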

	\begin{proof}
		We will give proof of the first part only. The second part is similar. Let $\left(a_{n},b_{n}\right)\in \mathcal{G}_{n}^{2n}$ and let $\left(a_{n},b_{n}\right)\subseteq \left[a+1/4^{n+1}, a+5/4^{n+1}\right]$. Then the length of $\left(a_{n},b_{n}\right)$ is of the form $1/4^{m(n)}$ where $m(n)\in\{n+1,\,...\,,2n\}$ for $n\in \mathbb{N}$. Thus
		\begin{eqnarray}\label{nondifferentiableinequality}
		\frac{1}{4^{2n}}\leqslant b_{n}-a_{n}=\frac{1}{4^{m(n)}}\leqslant \frac{1}{4^{n+1}}.
		\end{eqnarray}
		Define $f(x)=\left(x-a_{n}\right)^{-\frac{7}{4}}$ for $x\in \left(a_{n},b_{n}\right)$. If $c_{n}=(a_{n}+b_{n})/2$ and $d_{n}=(a_{n}+c_{n})/2\,$, then for $n\geqslant 2$,
		\begin{align}
		\begin{split}
		f\left(d_{n}\right)-f\left(c_{n}\right)&=\left(d_{n}-a_{n}\right)^{-\frac{7}{4}}-\left(c_{n}-a_{n}\right)^{-\frac{7}{4}}=2^{\frac{7(2m(n)+2)}{4}}-2^{\frac{7(2m(n)+1)}{4}}\\
		&= 2^{\frac{7(m(n)+1)}{2}}\left(1-2^{-\frac{7}{4}}\right)>2^{\frac{7(n+1)}{2}}\left(1-2^{-1}\right)=2^{\frac{7n+5}{2}}>4\pi.\label{Eq4}
		\end{split}
		\end{align}
		By the intermediate value theorem and \eqref{Eq4}, we will get a pair of real numbers $u_{n}, v_{n}$ such that $\left\{u_{n}, v_{n}\right\}\subseteq\left(d_{n},c_{n}\right)$ and
		\begin{align*}
		&f\left( u_{n} \right)=\left(u_{n}-a_{n}\right)^{-\frac{7}{4}}= \left(4l(n)+1\right)\frac{\pi}{2},\\
		&f\left( v_{n} \right)=\left(v_{n}-a_{n}\right)^{-\frac{7}{4}}= 2l(n)\pi,\,\,\text{for}\,\,n\geqslant 2,
		\end{align*}
		where $l(n)$ is an integer depending on $n$. This completes the proof.
	\end{proof}

	\par Let $\left\{\left(a_{n},b_{n}\right)\right\}$ be an enumeration of $\mathcal{G}$, let $c_{n}=(a_{n}+b_{n})/2$ and let $d_{n}=(a_{n}+c_{n})/2$ for all $n\in \mathbb{N}$. Define
	\begin{gather}
	g_{n}(x)=
	\begin{cases}
	\left(x-a_{n}\right)^{\frac{1}{4}}\sin\left(x-a_{n}\right)^{-\frac{7}{4}}&\text{if $a_{n}<x\leqslant c_{n}$}\\
	\qquad\qquad 0&\text{if $x=a_{n}$},
	\end{cases}\\
	h_{n}(x)=
	\begin{cases}
	\left(b_{n}-x\right)^{\frac{1}{4}}\sin\left(b_{n}-x\right)^{-\frac{7}{4}}&\text{if $c_{n}\leqslant x<b_{n}$}\\
	\qquad\qquad 0&\text{if $x=b_{n}$},
	\end{cases}
	\end{gather}
	and
	\begin{gather}
	f_{n}(x)=
	\begin{cases}
	g_{n}(x)&\mbox{if}\,\,a_{n}\leqslant x\leqslant c_{n}\\
	h_{n}(x)&\mbox{if}\,\,c_{n}\leqslant x\leqslant b_{n}.
	\end{cases}
	\end{gather}
	Then $f_{n}$ is continuous on $[a_{n},b_{n}]$ and differentiable on $(a_{n},b_{n})$ for all $n\in\mathbb{N}$. Now define $f\colon [0,1]\to \mathbb{R}$ by
	\begin{gather}\label{Eq13}
	f(x)=
	\begin{cases}
	f_{n}(x)&\mbox{on}\,\,\left(a_{n},b_{n}\right)\quad \text{for $n\in \mathbb{N}$},\\
	\quad 0&\mbox{on}\,\,S.
	\end{cases}
	\end{gather}
	By the construction, it is clear that $f$ is continuous on $[0,1]$ and differentiable on $[0,1]\setminus S$. Moreover, note that if
	\begin{gather}
	G_{n}(x)=
	\begin{cases}
	(x-a_{n})^{3}\cos(x-a_{n})^{-\frac{7}{4}}&\text{if $a_{n}<x\leqslant c_{n}$}\\
	\qquad\qquad 0&\text{if $x=a_{n}$}
	\end{cases}
	\end{gather}
	and
	\begin{gather}
	H_{n}(x)=
	\begin{cases}
	(b_{n}-x)^{3}\cos(b_{n}-x)^{-\frac{7}{4}}&\text{if $c_{n}\leqslant x<b_{n}$}\\
	\qquad\qquad 0&\text{if $x=b_{n}$}
	\end{cases}
	\end{gather}
	then
	\begin{gather}
	g_{n}(x)=\frac{4}{7}G_{n}^{'}(x)-\frac{12}{7}(x-a_{n})^{2}\cos(x-a_{n})^{-\frac{7}{4}}\quad\mbox{if}\,\,a_{n}<x\leqslant c_{n};\label{Eq7}
	\end{gather}
	and
	\begin{gather}
	h_{n}(x)=-\frac{4}{7}H_{n}^{'}(x)-\frac{12}{7}(b_{n}-x)^{2}\cos(b_{n}-x)^{-\frac{7}{4}}\quad\mbox{if}\,\,c_{n}\leqslant x< b_{n}.\label{Eqnew10}
	\end{gather}

	\begin{theorem}\label{remarkable lemma1}
		Let $f$ be the function defined in \eqref{Eq13}. Then $f$ is not differentiable on $S$.
	\end{theorem}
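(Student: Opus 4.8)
The plan is to show that for each fixed $a\in S$ the difference quotient $x\mapsto\bigl(f(x)-f(a)\bigr)/(x-a)=f(x)/(x-a)$ (note $f(a)=0$) fails to converge as $x\to a$; it suffices to exhibit two sequences tending to $a$ from the same side along which these quotients tend to two different values, namely $0$ and $+\infty$. The sequences will come from the contiguous intervals supplied by Lemma \ref{Lemma1} together with the special points inside them furnished by Lemma \ref{Corollary1}.

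First I would fix $a\in S$ and apply Lemma \ref{Lemma1} to obtain a sequence $\{(a_{n_k},b_{n_k})\}_{k\geqslant 2}$ in $\mathcal G$ with $n_k\to\infty$, $(a_{n_k},b_{n_k})\in\mathcal G_{n_k}^{2n_k}$, and satisfying either \eqref{Lemma1a} or \eqref{Lemma1b}. By symmetry it is enough to treat \eqref{Lemma1a}, the other case being handled identically with left-hand difference quotients and the pieces $h_{n_k}$ in place of $g_{n_k}$. Under \eqref{Lemma1a}, the first part of Lemma \ref{Corollary1} produces points $u_{n_k},v_{n_k}\in(d_{n_k},c_{n_k})\subseteq(a_{n_k},b_{n_k})$ with $(u_{n_k}-a_{n_k})^{-7/4}=(4l(k)+1)\pi/2$ and $(v_{n_k}-a_{n_k})^{-7/4}=2l(k)\pi$, hence $\sin(u_{n_k}-a_{n_k})^{-7/4}=1$ and $\sin(v_{n_k}-a_{n_k})^{-7/4}=0$. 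Since $u_{n_k},v_{n_k}\in\bigl[a+4^{-(n_k+1)},\,a+5\cdot 4^{-(n_k+1)}\bigr]$ and $n_k\to\infty$, both sequences decrease to $a$ while staying $>a$; moreover on $(a_{n_k},c_{n_k}]$ the function $f$ coincides with $g_{n_k}$.

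Next I would evaluate the quotients. From $\sin(v_{n_k}-a_{n_k})^{-7/4}=0$ we get $f(v_{n_k})=g_{n_k}(v_{n_k})=0$, so $f(v_{n_k})/(v_{n_k}-a)=0$ for every $k$. On the other hand $f(u_{n_k})=g_{n_k}(u_{n_k})=(u_{n_k}-a_{n_k})^{1/4}$; since $u_{n_k}>d_{n_k}$ gives $u_{n_k}-a_{n_k}>(b_{n_k}-a_{n_k})/4\geqslant 4^{-(2n_k+1)}$ by \eqref{nondifferentiableinequality}, while $u_{n_k}-a\leqslant b_{n_k}-a\leqslant 5\cdot 4^{-(n_k+1)}$, we obtain
\[
\frac{f(u_{n_k})}{u_{n_k}-a}\;\geqslant\;\frac{4^{-(2n_k+1)/4}}{5\cdot 4^{-(n_k+1)}}\;=\;\frac15\,4^{(2n_k+3)/4}\;\xrightarrow[k\to\infty]{}\;+\infty.
\]
Thus the right-hand difference quotients of $f$ at $a$ equal $0$ along $\{v_{n_k}\}$ and diverge to $+\infty$ along $\{u_{n_k}\}$, so $\lim_{x\to a^{+}}f(x)/(x-a)$ does not exist (finitely or in the extended sense); a fortiori $f$ is not differentiable at $a$. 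Since $a\in S$ was arbitrary, this proves the theorem.

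I do not anticipate a serious obstacle, since all of the combinatorics is already packaged in Lemmas \ref{Lemma1} and \ref{Corollary1}. The one point needing care is the exponent bookkeeping in the displayed estimate: one must confirm that the ``deep'' lower bound $u_{n_k}-a_{n_k}\gtrsim 4^{-2n_k}$, after the exponent $1/4$ is applied, still dominates the ``shallow'' upper bound $u_{n_k}-a\lesssim 4^{-n_k}$ in the denominator, i.e. that $(2n_k+1)/4<n_k+1$, which holds for all $n_k\geqslant 1$; and in the case \eqref{Lemma1b} one must correctly switch to $h_{n_k}$, to the estimate $b_{n_k}-u_{n_k}>(b_{n_k}-a_{n_k})/4$, and to the left-hand quotient $f(u_{n_k})/(u_{n_k}-a)$ with $a-u_{n_k}\leqslant 5\cdot 4^{-(n_k+1)}$.
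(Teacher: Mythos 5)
Your proposal is correct and follows essentially the same route as the paper: apply Lemmas \ref{Lemma1} and \ref{Corollary1} to extract the points $u_{n_k},v_{n_k}$, observe that the difference quotients vanish along $\{v_{n_k}\}$ (since $\sin(v_{n_k}-a_{n_k})^{-7/4}=0$) and blow up along $\{u_{n_k}\}$ via the bounds $u_{n_k}-a_{n_k}\geqslant (b_{n_k}-a_{n_k})/4\geqslant 4^{-(2n_k+1)}$ and $u_{n_k}-a\leqslant 5\cdot 4^{-(n_k+1)}$. The exponent bookkeeping in your display matches the paper's estimates \eqref{remarkable lemma12}--\eqref{remarkable lemma13}, so nothing further is needed.
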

	\begin{proof}
		Let $a\in S$. Without loss of generality, assume $\left\{\left\{ u_{n_{k}},v_{n_{k}}\right\} \right\}_{k\geqslant 2}$ is a sequence satisfying  Lemma \ref{Corollary1}\ref{Corollary1a}. Then
		\begin{align}\label{remarkable lemma11}
		\begin{split}
		\frac{f(u_{n_{k}})-f(a)}{u_{n_{k}}-a}&=\frac{f_{n_{k}}(u_{n_{k}})}{u_{n_{k}}-a}=
		\frac{g_{n_{k}}(u_{n_{k}})}{u_{n_{k}}-a}\\
		&=\frac{\left(u_{n_{k}}-a_{n_{k}}\right)^{\frac{1}{4}}\sin\left(u_{n_{k}}-a_{n_{k}}\right)^{-\frac{7}{4}}}{u_{n_{k}}-a}=\frac{\left(u_{n_{k}}-a_{n_{k}}\right)^{\frac{1}{4}}}{u_{n_{k}}-a}.
		\end{split}
		\end{align}
		Now as
		\[
		\frac{b_{n_{k}}-a_{n_{k}}}{4}=d_{n_{k}}-a_{n_{k}}<u_{n_{k}}-a_{n_{k}}<c_{n_{k}}-a_{n_{k}}=\frac{b_{n_{k}}-a_{n_{k}}}{2},
		\]
		using \eqref{nondifferentiableinequality}, we get
		\begin{align}\label{remarkable lemma12}
		\frac{1}{2^{n_{k}+\frac{1}{2}}}<\left(u_{n_{k}}-a_{n_{k}}\right)^{\frac{1}{4}}<\frac{1}{2^{\frac{n_{k}}{2}+\frac{3}{4}}}.
		\end{align}
		Furthermore, as $\left(a_{n_{k}},b_{n_{k}}\right)\subseteq \left[a+1/4^{n_{k}+1}, a+5/4^{n_{k}+1}\right]$,
		\begin{align}\label{remarkable lemma13}
		\frac{1}{4^{n_{k}+1}}<u_{n_{k}}-a<\frac{5}{4^{n_{k}+1}}.
		\end{align}
		Finally, applying \eqref{remarkable lemma12} and \eqref{remarkable lemma13} in \eqref{remarkable lemma11}, we get $$\lim\limits_{k\rightarrow \infty}\frac{f(u_{n_{k}})-f(a)}{u_{n_{k}}-a}=\infty.$$ 
		Proof of $\lim\limits_{k\rightarrow \infty}(f(v_{n_{k}})-f(a))/(v_{n_{k}}-a)=0$
		is obvious. Therefore $f$ is not differentiable at $a$ and this completes the proof.
	\end{proof}
	
	\begin{corollary}\label{Not ACG^{*}}
		Let $f$ be the function defined in \eqref{Eq13}. Then $f$ is not $ACG^{*}$.
	\end{corollary}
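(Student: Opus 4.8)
The plan is to deduce the corollary directly from Theorem \ref{remarkable lemma1} by invoking a classical differentiability property of $ACG^{*}$ functions. Recall that a function which is $ACG^{*}$ on an interval is $VBG^{*}$ on that interval, and that every $VBG^{*}$ function on $[a,b]$ is finitely differentiable at almost every point of $[a,b]$ (see \cite{RSG}). I would argue by contradiction.

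Suppose $f$ were $ACG^{*}$ on $[0,1]$. Then, by the property just recalled, the set of points of $[0,1]$ at which $f$ fails to be differentiable has Lebesgue measure zero. On the other hand, Theorem \ref{remarkable lemma1} asserts that $f$ is not differentiable at any point of $S$, and it was noted just after Definition \ref{Definition1} that $\lambda(S)=1/2$. Hence the set where $f$ is not differentiable has measure at least $1/2>0$, a contradiction. Therefore $f$ is not $ACG^{*}$.

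This argument is short, and I do not expect any real obstacle once Theorem \ref{remarkable lemma1} and the value $\lambda(S)=1/2$ are in hand. The only point requiring care is to appeal to the differentiation theorem for the restricted class $VBG^{*}$: for the wide class $ACG$ (respectively $VBG$) one only gets approximate differentiability almost everywhere, which would not yield the desired contradiction.
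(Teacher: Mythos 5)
Your argument is correct and is essentially the same as the paper's: the paper also deduces the corollary from Theorem \ref{remarkable lemma1} together with the fact that $ACG^{*}$ functions are differentiable almost everywhere, using that $\lambda(S)=1/2>0$. Your additional remarks (routing through $VBG^{*}$ and distinguishing this from the merely approximate differentiability of $ACG$ functions) are accurate refinements of the same one-line proof.
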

	\begin{proof}
		Since $ACG^{*}$ functions are differentiable a.e., by Theorem \ref{remarkable lemma1}, we get that $f$ is not $ACG^{*}$.
	\end{proof}

	\begin{theorem}\label{remarkable lemma2}
		Let $f$ be the function defined in \eqref{Eq13}. Then $f$ is Laplace differentiable on $[0,1]$.
	\end{theorem}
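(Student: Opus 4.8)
The plan is to split $[0,1]$ into the points lying outside $S$ and the points of $S$. If $x\notin S$, then $f$ is ordinarily differentiable at $x$ by construction, and since $f$ is continuous it is Perron integrable on a neighbourhood of $x$; writing $f(x+t)-f(x)=f'(x)t+o(t)$ and using the elementary identity $s^{2}\int_{0}^{\delta}e^{-st}t\,dt=1-e^{-s\delta}(s\delta+1)\to 1$ together with $s^{2}\int_{0}^{\delta}e^{-st}o(t)\,dt\to 0$ (split the integral at a small $\eta$ and estimate the two pieces), one sees that both one-sided Laplace limits at $x$ equal $f'(x)$, so $LD_{1}f(x)=f'(x)$. (Alternatively this follows from $\underline{D}f\leqslant \underline{LD_{1}}f\leqslant \overline{LD_{1}}f\leqslant \overline{D}f$.) Hence the whole point is the set $S$, where $f$ is not differentiable by Theorem~\ref{remarkable lemma1}; there I claim that $LD_{1}f(a)=0$ for every $a\in S$.

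Fix $a\in S$ and put $F(x)=\int_{0}^{x}f$, which is well defined and $C^{1}$ on $[0,1]$ because $f$ is continuous, with $F'=f$ and in particular $F'(a)=f(a)=0$. Since $f(a)=0$, integrating by parts against $e^{-st}$ rewrites the right-hand Laplace limit as
\[
s^{2}\int_{0}^{\delta}e^{-st}\bigl[f(a+t)-f(a)\bigr]\,dt
= s^{2}e^{-s\delta}\bigl(F(a+\delta)-F(a)\bigr)+s^{3}\int_{0}^{\delta}e^{-st}\bigl(F(a+t)-F(a)\bigr)\,dt,
\]
and the same computation (with the prefactor $-s^{2}$ and $h(t)=F(a-t)-F(a)$, $h'(t)=-f(a-t)$) gives the analogous identity for the left-hand limit with $F(a-t)-F(a)$. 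The boundary terms go to $0$ as $s\to\infty$, so both limits will be $0$ once I establish the key estimate
\[
\bigl|F(a+t)-F(a)\bigr|\leqslant C\,t^{3}\quad\text{and}\quad\bigl|F(a-t)-F(a)\bigr|\leqslant C\,t^{3}\qquad(0\leqslant t\leqslant\delta),
\]
with $C$ independent of $s$ and $t$ (it may depend on $a$): then $s^{3}\int_{0}^{\delta}e^{-st}\,C t^{3}\,dt\leqslant 6C/s\to 0$. Together with the first paragraph this proves that $f$ is Laplace differentiable on $[0,1]$ (at the endpoints $0,1\in S$ only the relevant one-sided statement is needed, by an identical argument).

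The cubic estimate is the heart of the matter and the step I expect to be the main obstacle. The crude pointwise bound $|f(x)|\leqslant\operatorname{dist}(x,S)^{1/4}$ (immediate from the formulas for $g_{n},h_{n}$) only gives $|F(a+t)-F(a)|\leqslant\int_{a}^{a+t}(u-a)^{1/4}\,du=\tfrac45 t^{5/4}$, and an exponent $\leqslant 2$ is useless in the integration by parts above — so one must exploit the cancellation of the oscillation. On a contiguous interval $(a_{n},b_{n})$, writing $\ell_{n}=b_{n}-a_{n}$ and substituting $u=(x-a_{n})^{-7/4}$, the bookkeeping $\tfrac74\cdot\tfrac{12}{7}=3$ together with one integration by parts ($\bigl|\int_{R}^{\infty}u^{-12/7}\sin u\,du\bigr|\leqslant 2R^{-12/7}$) yields $\bigl|\int_{0}^{L}y^{1/4}\sin(y^{-7/4})\,dy\bigr|\leqslant \tfrac87 L^{3}$; the mirror substitution handles $h_{n}$, so $\bigl|\int_{a_{n}}^{b_{n}}f\bigr|\leqslant \tfrac27\ell_{n}^{3}$ and, more generally, the integral of $f$ over any initial or terminal sub-interval of $(a_{n},b_{n})$ is $O(\ell_{n}^{3})$. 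Now decompose $\int_{a}^{a+t}f$: since $a\in S$, every contiguous interval meeting $(a,a+t)$ has its left endpoint $\geqslant a$, those contained in $(a,a+t)$ are pairwise disjoint subintervals of an interval of length $t$, whence $\sum\ell_{n}^{3}\leqslant(\max\ell_{n})^{2}\sum\ell_{n}\leqslant t^{2}\cdot t=t^{3}$, and there is at most one further interval straddling $a+t$, whose contribution is again $O(t^{3})$ (directly when $a+t$ lies before its midpoint $c_{n}$, since then the relevant sub-interval has length $\leqslant t$; and because $\ell_{n}\leqslant 2t$ when $a+t$ lies past $c_{n}$). Summing, $|F(a+t)-F(a)|\leqslant C t^{3}$, and the left-hand estimate is symmetric. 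The difficulty, and what one cannot avoid, is precisely this marriage of the per-gap cancellation bound $O(\ell_{n}^{3})$ with the packing bound $\sum\ell_{n}\leqslant t$ for gaps inside a window of width $t$, no pointwise control on $f$ being strong enough on its own.
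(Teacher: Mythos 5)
Your proof is correct, and it takes a genuinely different route from the paper's, although both ultimately rest on the same cancellation: the antiderivative of $y^{1/4}\sin\left(y^{-7/4}\right)$ is $O(y^{3})$ near $0$ (the paper encodes this in the auxiliary functions $G_{n}$, $H_{n}$ and the identities \eqref{Eq7}--\eqref{Eqnew10}; you extract it via the substitution $u=y^{-7/4}$ and one integration by parts). The paper keeps the parameter $s$ throughout: it splits $[x,x+\delta]$ into the contiguous intervals of $S$, integrates $s^{2}e^{-s(y-x)}G_{n}'(y)$ by parts inside each gap, and bounds each gap's contribution by $8(b_{n}-a_{n})$ uniformly in $s$; summing yields only $\limsup_{s\to\infty}|\cdots|\leqslant 8\delta$, which does not vanish for fixed $\delta$, so the paper must invoke Lemma \ref{lemma11} (independence of the $\limsup$ from $\delta$) and shrink $\delta$ through points of $S$ to conclude the limit is $0$. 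You instead integrate by parts once, globally, against the primitive $F=\int_{0}^{\cdot}f$ and reduce everything to the pointwise estimate $|F(a+t)-F(a)|\leqslant Ct^{3}$, obtained by combining the per-gap cancellation bound $O(\ell_{n}^{3})$ with the packing bound $\sum\ell_{n}\leqslant t$, $\max\ell_{n}\leqslant t$ for gaps inside a window of width $t$; your treatment of the single straddling gap, and the observation that no gap can straddle $a$ itself because $a\in S$, make the decomposition airtight. Your route buys a fixed-$\delta$ argument that never needs Lemma \ref{lemma11}, and the cubic estimate on the primitive is a reusable and strictly stronger piece of information; the paper's route avoids proving any pointwise estimate on $F$ and stays entirely inside the Laplace transform, at the cost of the extra $\delta$-shrinking step. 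I see no gap in your argument.
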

	\begin{proof}
		It is sufficient to check the Laplace differentiability of $f$ on $S$ only. Let $x\in S\setminus\{1\}$. Choose $\delta>0$ be such that $x+\delta\in S$. Then
		\begin{align*}
		[x,x+\delta]=\left([x,x+\delta]\setminus S\right)\cup \left([x,x+\delta]\cap S\right)=\left(\cup_{k}\left(a_{n_{k}},b_{n_{k}}\right)\right)\cup A_{S},
		\end{align*}
		where $A_{S}=[x,x+\delta]\cap S$ and $\left\{\left(a_{n_{k}},b_{n_{k}}\right)\right\}$ is a sub-sequence of $\left\{\left(a_{n},b_{n}\right)\right\}$, the sequence of contiguous intervals of $S$ in $[0,1]$. Then
		\begin{gather*}
		\int_{x}^{x+\delta}s^{2}e^{-s(y-x)}\left(f(y)-f(x)\right)\,dy=\int_{x}^{x+\delta}s^{2}e^{-s(y-x)}f(y)\,dy\\
		=\int\limits_{\bigcup_{k}\left(a_{n_{k}},b_{n_{k}}\right)} s^{2}e^{-s(y-x)}f(y)\,dy=\sum_{k}\int_{a_{n_{k}}}^{b_{n_{k}}}s^{2}e^{-s(y-x)}f_{n_{k}}(y)\,dy.
		\end{gather*}
		Choose an interval $\left(a_{n_{k}},b_{n_{k}}\right)$ from $\left\{\left(a_{n_{k}},b_{n_{k}}\right)\right\}$ and to avoid the complexity of notations denote it by $\left(a_{n},b_{n}\right)$. Note that
		\begin{align}
		&a_{n}\leqslant y\leqslant c_{n}\implies
		\begin{cases}
		0\leqslant (y-a_{n})< (y-x)\,\,\text{and}\\
		0<(c_{n}-a_{n})<(c_{n}-x)<\delta.
		\end{cases}\label{eq:facts1}\\
		&c_{n}\leqslant y\leqslant b_{n}\implies
		\begin{cases}
		0\leqslant (b_{n}-y)< (y-x)\,\,\text{and}\\
		0<(b_{n}-c_{n})<(c_{n}-x)<\delta.
		\end{cases}\label{eq:facts3}\\
		&\quad\qquad\qquad\qquad\left|G_{n}(x)\right|\leqslant (x-a_{n})^{3}.\label{eq:facts2}\\
		&\quad\qquad\qquad\qquad\left|H_{n}(x)\right|\leqslant (b_{n}-x)^{3}.\label{eq:facts4}
		\end{align}
		
		As $x^{r}e^{-x}\leqslant r!$ for $x>0$, by \eqref{eq:facts1} and \eqref{eq:facts2}, we get
		\begin{align}
		\begin{split}
		&\left|\int_{a_{n}}^{c_{n}}s^{2}e^{-s(y-x)}G_{n}^{'}(y)\,dy\right|\\
		&\leqslant \left|s^{2}e^{-s(c_{n}-x)}G_{n}(c_{n})\right| + \left|\int_{a_{n}}^{c_{n}}s^{3}e^{-s(y-x)}G_{n}(y)\,dy\right|\\
		&\leqslant s^{2}e^{-s(c_{n}-x)}(c_{n}-a_{n})^{3}+\int_{a_{n}}^{c_{n}}s^{3}e^{-s(y-x)}(y-a_{n})^{3}\,dy\\
		&\leqslant s^{2}e^{-s(c_{n}-x)}(c_{n}-x)^{2}(c_{n}-a_{n})+\int_{a_{n}}^{c_{n}}s^{3}e^{-s(y-x)}(y-x)^{3}\,dy\\
		&\leqslant 2(c_{n}-a_{n})+ 6(c_{n}-a_{n})=8(c_{n}-a_{n}).\label{eq:Facts1}
		\end{split}
		\end{align}
		Furthermore, \eqref{Eq7}, \eqref{eq:facts1} and \eqref{eq:Facts1} imply
		\begin{align*}
		&\left|\int_{a_{n}}^{c_{n}}s^{2}e^{-s(y-x)}f_{n}(y)\,dy\right|=\left|\int_{a_{n}}^{c_{n}}s^{2}e^{-s(y-x)}g_{n}(y)\,dy\right|\\
		&\leqslant\left|\int_{a_{n}}^{c_{n}}s^{2}e^{-s(y-x)}\left(\frac{4}{7}G_{n}^{'}(y)-\frac{12}{7}(y-a_{n})^{2}\cos(y-a_{n})^{-\frac{7}{4}}\right)\,dy\right|\\
		&\leqslant\frac{32}{7}(c_{n}-a_{n}) + \frac{12}{7}\int_{a_{n}}^{c_{n}}s^{2}e^{-s(y-x)}(y-a_{n})^{2}\,dy\\
		&\leqslant\frac{32}{7}(c_{n}-a_{n}) + \frac{12}{7}\int_{a_{n}}^{c_{n}}s^{2}e^{-s(y-x)}(y-x)^{2}\,dy\leqslant 8(c_{n}-a_{n}).
		\end{align*}
		Similarly, $\left|\int_{c_{n}}^{b_{n}}s^{2}e^{-s(y-x)}f_{n}(y)\,dy\right|\leqslant 8(b_{n}-c_{n})$ can be proved with the help of \eqref{Eqnew10}, \eqref{eq:facts3} and \eqref{eq:facts4}. It follows
		\[
		\left|\int_{a_{n}}^{b_{n}}s^{2}e^{-s(y-x)}f_{n}(y)\,dy\right|\leqslant 8(b_{n}-a_{n}).
		\]
		Thus
		\begin{align}\label{equation 4.25}
		\begin{split}
		\limsup\limits_{s\rightarrow \infty}\left|\int_{x}^{x+\delta}s^{2}e^{-s(y-x)}f(y)\,dy\right|&\leqslant \limsup\limits_{s\rightarrow \infty} \sum_{k}\left|\int_{a_{n_{k}}}^{b_{n_{k}}}s^{2}e^{-s(y-x)}f_{n_{k}}(y)\,dy\right|\\
		&\leqslant \limsup\limits_{s\rightarrow \infty} \sum_{k}8(b_{n_{k}}-a_{n_{k}})\leqslant \delta.
		\end{split}
		\end{align}
		As $x$ is arbitrary, by\eqref{equation 4.25} and Lemma \ref{lemma11}, we get $LD_{1}^{+}f(x)=0$ for all $x\in S\setminus\{1\}$. Proof of $LD_{1}^{-}f(x)=0$ for all $x\in S\setminus\{0\}$ is similar. This completes the proof.
	\end{proof}

	\begin{theorem}\label{remarkable theorem}
		Let $f$ be the function defined in \eqref{Eq13}. Then $LD_{1}f$ is Laplace integrable on $[0,1]$ but not Perron integrable.
	\end{theorem}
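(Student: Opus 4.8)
\emph{Plan.} The plan is to treat the two assertions of the theorem separately; the real work has already been done in Theorems~\ref{remarkable lemma1} and~\ref{remarkable lemma2} and Corollary~\ref{Not ACG^{*}}, and what remains is short. The guiding idea is that, no matter which of the two integrals one uses, a primitive of a given function is determined up to an additive constant, so a Perron primitive of $LD_{1}f$ would have to coincide, up to a constant, with the Laplace primitive $f$ produced in the first step --- and that is incompatible with $f$ failing to be $ACG^{*}$.

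\emph{Laplace integrability.} First I would use Theorem~\ref{remarkable lemma2}: $LD_{1}f(x)$ exists and is finite for every $x\in[0,1]$, so $\underline{LD}_{1}f=\overline{LD}_{1}f=LD_{1}f$ identically. Hence $f$ is itself a major function of $LD_{1}f$ --- it is continuous, $\underline{LD}_{1}f=LD_{1}f\geqslant LD_{1}f$, and $\underline{LD}_{1}f=LD_{1}f>-\infty$ --- and, applying the same remark to $-f$, also a minor function of $LD_{1}f$. Combined with Lemma~\ref{increasinglemma} this forces
\[
\sup\{V_{0}^{1}\mid V\text{ minor}\}=\inf\{U_{0}^{1}\mid U\text{ major}\}=f(1)-f(0),
\]
a finite value; running the same argument on $[0,x]$ for each $x\in[0,1]$ shows $LD_{1}f\in\mathcal{LP}[0,1]$ and that $f$ is a primitive of $LD_{1}f$.

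\emph{Failure of Perron integrability.} Next I would argue by contradiction: suppose $LD_{1}f\in\mathsf{P}[0,1]$, with Perron primitive $G$. Since the Laplace integral includes the Perron integral and agrees with it in value, $G$ is also a Laplace primitive of $LD_{1}f$, so comparing $G$ with the primitive $f$ from the first step gives, for every $x\in[0,1]$,
\[
\bigl(G(x)-f(x)\bigr)-\bigl(G(0)-f(0)\bigr)=\int_{0}^{x}LD_{1}f-\int_{0}^{x}LD_{1}f=0 ,
\]
i.e. $G$ and $f$ differ by a constant. But an indefinite Perron integral is $ACG^{*}$ (Perron $=$ restricted Denjoy; Chapter~11 of \cite{RSG}), so $f=G-\text{const}$ would be $ACG^{*}$ on $[0,1]$, contradicting Corollary~\ref{Not ACG^{*}}. (One may also bypass the $ACG^{*}$ characterisation entirely: a Perron primitive is differentiable a.e., so $f=G-\text{const}$ would be differentiable a.e., contradicting Theorem~\ref{remarkable lemma1} because $\lambda(S)=1/2>0$.) Either way $LD_{1}f$ is not Perron integrable.

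\emph{Main obstacle.} I do not expect a genuine obstacle: the only point meriting care is the identification $G=f+\text{const}$, and it rests on nothing beyond the fact that "primitive" is defined by the same relation $F(x)-F(a)=\int_{a}^{x}(\cdot)$ for both integrals together with the coincidence of the two integrals on Perron-integrable integrands, both already recorded in the excerpt. The difficulty of this section is concentrated entirely in the construction of $f$ and in Theorems~\ref{remarkable lemma1}--\ref{remarkable lemma2}.
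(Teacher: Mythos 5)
Your proposal is correct and follows essentially the same route as the paper: establish that $LD_{1}f$ exists finitely everywhere (Theorem~\ref{remarkable lemma2}), conclude Laplace integrability with primitive $f$ via the fundamental theorem (you re-derive Theorem~\ref{THEOREM} inline by noting $f$ is simultaneously a major and minor function, which is exactly how that theorem is proved), and then derive a contradiction with Corollary~\ref{Not ACG^{*}} from the fact that a Perron primitive would have to agree with $f$ up to a constant. Your alternative ending via a.e.\ differentiability and $\lambda(S)=1/2$ is a valid shortcut but is the same idea, since the paper's Corollary~\ref{Not ACG^{*}} itself rests on $ACG^{*}$ functions being differentiable a.e.
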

	\begin{proof}
		By Theorem \ref{remarkable lemma2}, we get $LD_{1}f$ exists finitely on $[0,1]$. Let $\phi= LD_{1}f$. As $f(0)=0$, Theorem \ref{THEOREM} implies $\phi$ is Laplace integrable and $f(x)=\int_{0}^{x}\phi$ for $x\in[0,1]$. Now if $\phi$ is Perron integrable, then $(\mathsf{P})\int_{0}^{x}\phi=\int_{0}^{x}\phi$ for all $x\in [0,1]$. Which implies $f$ is $ACG^{*}$, contradicting the Corollary \ref{Not ACG^{*}}. Thus $\phi$ is not Perron integrable.
	\end{proof}

	\section{Basic properties}\label{Basic Properties}

	\par The next theorem is a straightforward consequence of the definition of Laplace integral, so we omit its proof.

	\begin{theorem}\label{Theorem 1}\noindent
		\begin{enumerate}[\upshape A.]
			\item Let $f\in \mathcal{LP}[a,b]$ and $F(x)= \int_{a}^{x}f$. If $U$ and $V$ are major and minor functions of $f$ respectively, then each of the functions $U-F$ and $F-V$ are non-decreasing. \label{theorem 3.8}
			\item 
			\begin{enumerate}[\upshape a.]
				\item If $f\in \mathcal{LP}[a,b]$, the $f$ is Laplace integrable on every sub-interval of $[a,b]$.
				\item Let $c\in (a,b)$. If $f$ is Laplace integrable on both intervals $[a,c]$ and $[c,b]$, then $f$ is Laplace integrable on $[a,b]$ and $\int_{a}^{b}f =\int_{a}^{c}f + \int_{c}^{b}f$.
			\end{enumerate}
			\item If $f\in \mathcal{LP}[a,b]$, then $f$ is finite valued a.e on $[a,b]$.
			\item \label{A.E. Equal Functions} Let $f\in \mathcal{LP}[a,b]$ and let $g=f$ a.e. on $[a,b]$. Then $g\in \mathcal{LP}[a,b]$ and $\int_{a}^{b}g=\int_{a}^{b}f$.
			\item Let $f, h \in \mathcal{LP}[a,b]$. Then:
			\begin{enumerate}[\upshape a.]
				\item $kf+ lh\in \mathcal{LP}[a,b]$ and $\int_{a}^{b}(kf+ lh)= k\int_{a}^{b}f + l\int_{a}^{b}h$ for each $k,\,l\in \mathbb{R}$.
				\item If $f\leqslant h$ a.e. on $[a,b],$ then $\int_{a}^{b}f\leqslant \int_{a}^{b}h$.
			\end{enumerate}
		\end{enumerate}
	\end{theorem}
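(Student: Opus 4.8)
The plan is to prove the six assertions in the order B, A, C, D, E, since A relies on B and E(b) relies on D together with E(a). The tools I would use throughout are Lemma~\ref{increasinglemma} and Corollary~\ref{corollary_1}; the super-/sub-additivity of $\underline{LD}_1,\overline{LD}_1$ that is already implicit in the proof of Lemma~\ref{increasinglemma}, and their behaviour under multiplication by a constant; the fact that the one-sided derivates $\underline{LD}_1^{+}U(c)$, $\underline{LD}_1^{-}U(c)$ depend only on $U(c)$ and the values of $U$ on the corresponding side of $c$; the sandwich $\underline{D}\leqslant\underline{LD}_1\leqslant\overline{LD}_1\leqslant\overline{D}$ (so a function differentiable at a point has all Laplace derivates equal to its derivative there, and a monotone function has finite Laplace derivates a.e.); and Theorem~19 of \cite{OLD}.

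For B I would first record a \emph{restriction} principle: the restriction of a major (resp.\ minor) function of $f$ on $[a,b]$ to a subinterval $[c,d]$ is again a major (resp.\ minor) function of $f$ on $[c,d]$ --- continuity is clear, and $\underline{LD}_1 U\geqslant f$, $\underline{LD}_1 U>-\infty$ persist, the one-sided derivates at $c,d$ being dominated by the two-sided ones. Part B(a) is then immediate: pick $U,V$ with $U_a^b-V_a^b<\epsilon$; since $U-V$ is non-decreasing, $U_c^d-V_c^d\leqslant U_a^b-V_a^b<\epsilon$, and Corollary~\ref{corollary_1} applies on $[c,d]$. For B(b), given major/minor pairs $(U_1,V_1)$ on $[a,c]$ and $(U_2,V_2)$ on $[c,b]$ with small defects and increments close to the corresponding integrals, I would glue $U:=U_1$ on $[a,c]$ and $U:=U_2+(U_1(c)-U_2(c))$ on $[c,b]$; it is continuous, unchanged on the two half-intervals, and at the join $\underline{LD}_1^{+}U(c)=\underline{LD}_1^{+}U_2(c)\geqslant\underline{LD}_1 U_2(c)\geqslant f(c)$ and $\underline{LD}_1^{-}U(c)=\underline{LD}_1^{-}U_1(c)\geqslant f(c)$ (both $>-\infty$), so $\underline{LD}_1 U(c)\geqslant f(c)$ and $U$ is a major function of $f$ on $[a,b]$; gluing $V$ analogously and letting the defects, and the gap between increments and integrals, tend to $0$ yields $f\in\mathcal{LP}[a,b]$ and $\int_a^b f=\int_a^c f+\int_c^b f$. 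With B in place, A follows quickly: for $x<y$ and any minor function $V$ of $f$ on $[a,b]$, the restriction principle gives $V_a^x\leqslant\int_a^x f=F(x)$ and, via Lemma~\ref{increasinglemma} on $[x,y]$, $V_x^y\leqslant U(y)-U(x)$ for a major function $U$; summing and taking the supremum over $V$ gives $F(y)-F(x)=\int_x^y f\leqslant U(y)-U(x)$, so $U-F$ is non-decreasing, and symmetrically (using additivity and the minor-increment bound on $[x,y]$) $F-V$ is non-decreasing.

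For C, fix a major function $U_0$ and a minor function $V_0$ of $f$ and set $W=U_0-V_0$; by Lemma~\ref{increasinglemma} it is continuous and non-decreasing, hence differentiable with finite derivative off a null set $N$. For $x\notin N$, writing $U_0=W+V_0$ and using that $W$ is differentiable at $x$, $\underline{LD}_1 U_0(x)=W'(x)+\underline{LD}_1 V_0(x)$ and $\overline{LD}_1 V_0(x)=\overline{LD}_1 U_0(x)-W'(x)$; since $\underline{LD}_1 V_0(x)\leqslant\overline{LD}_1 V_0(x)<+\infty$ (minor function) and $\overline{LD}_1 U_0(x)\geqslant\underline{LD}_1 U_0(x)>-\infty$ (major function), both $\underline{LD}_1 U_0(x)$ and $\overline{LD}_1 V_0(x)$ are finite, and as $\overline{LD}_1 V_0\leqslant f\leqslant\underline{LD}_1 U_0$ everywhere, $f$ is finite off $N$. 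For D, let $E=\{g\neq f\}$, which is null, and build a continuous non-decreasing $\psi$ with $\psi(a)=0$, $\psi(b)<\eta$, and $\underline{D}\psi\equiv+\infty$ on $E$ (for instance $\psi(x)=\sum_n\lambda(O_n\cap[a,x])$ with $O_n\supseteq E$ open and $\lambda(O_n)<2^{-n}\eta$), so that $\underline{LD}_1\psi=+\infty$ on $E$ and $\underline{LD}_1\psi\geqslant0$ throughout. For a major function $U$ of $f$, super-additivity gives $\underline{LD}_1(U+\psi)\geqslant\underline{LD}_1 U+\underline{LD}_1\psi$, which is $\geqslant g$ off $E$, $=+\infty\geqslant g$ on $E$, and $>-\infty$ everywhere, so $U+\psi$ is a major function of $g$; likewise $V-\psi$ is a minor function of $g$ for any minor function $V$ of $f$. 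Taking $U_a^b-V_a^b$ and $\eta$ small, Corollary~\ref{corollary_1} gives $g\in\mathcal{LP}[a,b]$, and squeezing $\int_a^b g$ between $V_a^b-\eta$ and $U_a^b+\eta$ forces $\int_a^b g=\int_a^b f$.

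For E(a): if $U$ is major for $f$ then $kU$ is major for $kf$ when $k\geqslant0$ and minor for $kf$ when $k<0$ (using $\underline{LD}_1(kU)=k\,\overline{LD}_1 U$ and $\overline{LD}_1(kU)=k\,\underline{LD}_1 U$ for $k<0$), with the dual statement for minor functions; passing to $\inf$/$\sup$ gives $kf\in\mathcal{LP}$ with $\int(kf)=k\int f$. If $U_1,U_2$ are major for $f,h$ then $U_1+U_2$ is major for $f+h$ by super-additivity, dually for minor functions, so $f+h\in\mathcal{LP}$ and $\int(f+h)=\int f+\int h$; combining proves E(a). For E(b), by E(a) and D the function $p:=\max(h-f,0)$ lies in $\mathcal{LP}[a,b]$ with $\int_a^b p=\int_a^b h-\int_a^b f$, and since $p\geqslant0$ everywhere, every major function $U$ of $p$ satisfies $\underline{LD}_1 U\geqslant p\geqslant0$ and $\underline{LD}_1 U>-\infty$ throughout, hence is non-decreasing by Theorem~19 of \cite{OLD}, so $U_a^b\geqslant0$ and $\int_a^b p\geqslant0$. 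The only genuinely delicate point I anticipate is the join-point verification in B(b) (and in the restriction principle): one must check that the glued function is a major function \emph{through the one-sided Laplace derivates} at the gluing point, exploiting that these are insensitive to the function on the other side; everything else is bookkeeping with the monotonicity of $U-V$ and the calculus of $\underline{LD}_1,\overline{LD}_1$ already used in Section~\ref{Definition}.
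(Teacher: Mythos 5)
The paper gives no proof of this theorem at all --- it is dismissed with the sentence ``a straightforward consequence of the definition of Laplace integral, so we omit its proof'' --- so there is no argument of the authors' to measure yours against. Your proof is correct, and it is the standard Perron-theory development (Lemmas 8.22--8.25 and the surrounding material of \cite{RSG}) transplanted to Laplace derivates: restriction and gluing of major/minor functions for B, monotonicity of $U-V$ (Lemma \ref{increasinglemma}) for A and C, an auxiliary non-decreasing $\psi$ with $\underline{D}\psi=+\infty$ on the exceptional null set for D, and super-additivity of $\underline{LD}_1$ together with Theorem 19 of \cite{OLD} for E. I checked the delicate spots and they hold up: the one-sided Laplace derivates at the gluing point in B(b) do depend only on the values of the function on the corresponding side, so the glued function is a genuine major function; and in C the identities $\underline{LD}_1U_0=W'+\underline{LD}_1V_0$ and $\overline{LD}_1V_0=\overline{LD}_1U_0-W'$ are legitimate at points where $W=U_0-V_0$ is differentiable, because a finite limit can be split off a $\liminf$ or $\limsup$. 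Two conventions are worth stating explicitly in a final write-up: first, the paper's definition of a minor function literally reads ``$-V$ is a major function of $f$'', which you have (correctly) interpreted as ``$-V$ is a major function of $-f$'', i.e.\ $\overline{LD}_1V\leqslant f$ and $\overline{LD}_1V<+\infty$ --- this is what the proof of Lemma \ref{increasinglemma} actually uses; second, at endpoints of subintervals $\underline{LD}_1$ must be read as the appropriate one-sided derivate, which you do address. In E(a) you might add one sentence invoking parts C and D to ensure $f+h$ is well defined (no $\infty-\infty$) before forming $U_1+U_2$, but that is bookkeeping, not a gap.
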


	\begin{theorem}\label{Continuous primitive}
		Let $f\in \mathcal{LP}[a,b]$ and let $F(x)= \int_{a}^{x}f$ for all $x\in [a,b]$. Then $F$ is continuous on $[a,b]$.
	\end{theorem}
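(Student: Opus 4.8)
The plan is to exploit the fact that, for a Laplace integrable $f$, the primitive $F$ is squeezed between a major function and a minor function in the sense of increments, and that major and minor functions are continuous by definition; continuity of $F$ then follows by a sandwich argument. Concretely, I would first fix $x_{0}\in[a,b]$ and $\epsilon>0$. Since $f\in\mathcal{LP}[a,b]$, the common value of the sup over minor functions and the inf over major functions is finite, which forces both families to be non-empty; so choose a major function $U$ and a minor function $V$ of $f$. By Theorem \ref{Theorem 1}\ref{theorem 3.8}, both $U-F$ and $F-V$ are non-decreasing on $[a,b]$.

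Next I would turn the monotonicity of $U-F$ and $F-V$ into a two-sided increment estimate: for $a\leqslant x<y\leqslant b$,
\[
V(y)-V(x)\leqslant F(y)-F(x)\leqslant U(y)-U(x).
\]
Specialising to the pair $\{x_{0},x\}$ (treating the cases $x>x_{0}$ and $x<x_{0}$ separately, with the roles of the endpoints swapped in the second) yields in both cases
\[
|F(x)-F(x_{0})|\leqslant |U(x)-U(x_{0})|+|V(x)-V(x_{0})|.
\]
Since $U$ and $V$ are continuous at $x_{0}$ (being major/minor functions), the right-hand side tends to $0$ as $x\to x_{0}$; hence $F$ is continuous at $x_{0}$. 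As $x_{0}\in[a,b]$ was arbitrary, $F$ is continuous on $[a,b]$.

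I do not anticipate a genuine obstacle here: the argument is essentially the observation that $F$ differs from the continuous function $U$ by a monotone function and from the continuous function $V$ by a monotone function, so its oscillation at any point is controlled by the oscillations of $U$ and $V$. The only points requiring a word of care are (i) noting that the integrability hypothesis guarantees the existence of at least one major and one minor function, and (ii) recalling that continuity of major and minor functions is built into their definitions; neither needs the approximation parameter $\epsilon$, so in fact a single pair $(U,V)$ suffices.
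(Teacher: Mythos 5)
Your proof is correct and follows essentially the same route as the paper: both invoke Theorem \ref{Theorem 1}\ref{theorem 3.8} to sandwich the increments of $F$ between those of a continuous minor function $V$ and a continuous major function $U$, i.e.\ $V_{x_1}^{x_2}\leqslant F_{x_1}^{x_2}\leqslant U_{x_1}^{x_2}$, and deduce continuity of $F$ from that of $U$ and $V$. Your write-up is in fact slightly more careful than the paper's, which states the conclusion in one line (and contains a typo, asserting continuity of $f$ rather than $F$).
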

	\begin{proof}
		Let $[x_{1}, x_{2}]\subseteq [a,b]$ and let $U$, $V$ be major and minor functions of $f$ respectively on $[a,b]$. Then by Theorem \ref{Theorem 1}\ref{theorem 3.8}, we get $V_{x_{1}}^{x_{2}}\leq F_{x_{1}}^{x_{2}}\leq U_{x_{1}}^{x_{2}}$, and this implies $f$ is uniformly continuous on $[a,b]$.
	\end{proof}

	\begin{theorem}[\bf Fundamental theorem of calculus I]\label{THEOREM}
		If $F\colon[a,b]\to \mathbb{R}$ is continuous and $LD_{1}F$ exists finitely nearly everywhere on $[a,b]$, then $LD_{1}F\in \mathcal{LP}[a,b]$ and $\int_{a}^{x}LD_{1}F = F(x) - F(a)$ for all $x\in [a,b]$.
	\end{theorem}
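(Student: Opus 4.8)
The plan is to show that $F$ itself serves simultaneously as a major and a minor function of $LD_1 F$, up to an arbitrarily small correction coming from the exceptional null set where $LD_1 F$ may fail to exist. First I would observe that since $LD_1 F$ exists finitely nearly everywhere, at every point $x$ where it exists we have $\underline{LD}_1 F(x) = \overline{LD}_1 F(x) = LD_1 F(x)$, so in particular $\underline{LD}_1 F \geqslant LD_1 F$ holds there; and at points of the exceptional set $E$ (which is countable, since ``nearly everywhere'' means off a countable set) $F$ need not satisfy the major-function inequality. So $F$ is \emph{almost} an ex-major function of $LD_1 F$ in the sense of Definition \ref{exmajor}: the inequality $\underline{LD}_1 F \geqslant LD_1 F$ holds everywhere except possibly on the countable set $E$, hence certainly a.e., and $\underline{LD}_1 F > -\infty$ nearly everywhere (namely off $E$). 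The same applies to $-F$ with $-LD_1 F$. Thus $F$ is an ex-major function and $-F$ an ex-minor function of $\phi := LD_1 F$ — here I am using Definition \ref{exmajor} and its analogue, which only demand the inequality a.e. and the finiteness of the lower derivate nearly everywhere.

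Next, by Corollary \ref{corollary_1} together with Theorem \ref{exequivalent} (equivalence of $\mathcal{LP}$ and $\mathcal{LP}_{ex}$), and Lemma \ref{corrlm3}, it suffices to exhibit, for each $\epsilon > 0$, an ex-major function $U$ and an ex-minor function $W$ of $\phi$ with $U_a^b - W_a^b < \epsilon$. Taking $U = F$ and $W = F$ gives $U_a^b - W_a^b = 0 < \epsilon$, so $\phi \in \mathcal{LP}_{ex}[a,b] = \mathcal{LP}[a,b]$. Moreover, for the value of the integral: since $F$ is an ex-major function, $\int_a^b \phi \leqslant F_a^b$; since $F$ is also an ex-minor function, $\int_a^b \phi \geqslant F_a^b$; hence $\int_a^b \phi = F_a^b = F(b) - F(a)$. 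Applying this on each subinterval $[a,x]$ (legitimate because $F$ restricted to $[a,x]$ is still continuous with $LD_1 F$ finite nearly everywhere there) yields $\int_a^x LD_1 F = F(x) - F(a)$ for every $x \in [a,b]$.

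The one point that needs care — and which I expect to be the main obstacle — is justifying that $F$ genuinely qualifies as an ex-major (resp. ex-minor) function despite the exceptional set $E$: the definition of ex-major function requires $\underline{LD}_1 U \geqslant f$ \emph{a.e.}\ and $\underline{LD}_1 U > -\infty$ \emph{nearly everywhere}, so I must confirm that ``$LD_1 F$ exists finitely nearly everywhere'' forces $E$ to be countable (so that a.e.\ and nearly-everywhere conditions are both met automatically off $E$) and that at points of $E$ no constraint is violated. If instead one wanted to avoid the ex-integral machinery and argue directly with honest major/minor functions, the obstacle would be to smooth $F$ across the countably many bad points using Lemma \ref{corrlm1} and Lemma \ref{corrlm2} to push the lower derivate above $\phi$ everywhere while changing $F_a^b$ by less than $\epsilon$ — but that is precisely the content already packaged into Lemma \ref{corrlm3} and Theorem \ref{exequivalent}, so routing through $\mathcal{LP}_{ex}$ is the cleaner path.
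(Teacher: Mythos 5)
Your proposal is correct and follows essentially the same route as the paper: the paper's proof simply declares that $F$ is both a major and a minor function of $f:=LD_{1}F$ and concludes $F(b)-F(a)\leqslant \underline{\int_{a}^{b}}f\leqslant \overline{\int_{a}^{b}}f\leqslant F(b)-F(a)$. You are in fact more careful than the paper, which glosses over the nearly-everywhere exceptional set; your explicit detour through ex-major/ex-minor functions, Lemma \ref{corrlm3} and Theorem \ref{exequivalent} is exactly the justification the paper's terse argument implicitly relies on.
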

	\begin{proof}
		Let $LD_{1}F = f$ nearly everywhere. Then $F$ is a major as well as a minor function of $f$. Which implies $F(b) - F(a)\leqslant \underline{\int_{a}^{b}}f \leqslant \overline{\int_{a}^{b}}f \leqslant F(b) - F(a)$. Therefore $\int_{a}^{b}LD_{1}F$ exists, and $\int_{a}^{b}LD_{1}F = F(b) - F(a)$.
	\end{proof}
	
	\begin{theorem}\label{THEOREm}
		Let $LD_{1}F=f$ on $[a,b]$. If $f\in \mathcal{LP}[a,b]$, then $\int_{a}^{x}f = F(x) - F(a)$ for all $x\in [a,b]$.
	\end{theorem}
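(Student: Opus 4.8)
The plan is to recognise that, under the hypothesis $LD_1F=f$ on all of $[a,b]$, the continuous function $F$ is itself simultaneously a major and a minor function of $f$, and then to let the monotonicity statement already recorded in Theorem \ref{Theorem 1} do the rest.

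First I would record that $LD_1F=f$ on $[a,b]$ forces $\underline{LD}_1F=\overline{LD}_1F=f$ everywhere on $[a,b]$, and since $f$ is real-valued this common value is finite at every point. Combined with the continuity of $F$, this means the defining requirements of a major function of $f$ are met by $F$ (namely $\underline{LD}_1F=f\geqslant f$ and $\underline{LD}_1F=f>-\infty$ on $[a,b]$), and symmetrically that $F$ meets the requirements of a minor function of $f$. The hypothesis $f\in\mathcal{LP}[a,b]$ then guarantees that $\Phi(x):=\int_a^x f$ is a well-defined function on $[a,b]$ with $\Phi(a)=0$.

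Next I would invoke Theorem \ref{Theorem 1}\ref{theorem 3.8} twice: with the major function $U=F$ it yields that $F-\Phi$ is non-decreasing on $[a,b]$, and with the minor function $V=F$ it yields that $\Phi-F$ is non-decreasing on $[a,b]$. Hence $F-\Phi$ is both non-decreasing and non-increasing, i.e.\ constant on $[a,b]$; evaluating at $x=a$ gives $F(x)-\Phi(x)=F(a)$, that is $\int_a^x f=F(x)-F(a)$ for every $x\in[a,b]$. Alternatively, since $LD_1F=f$ exists finitely everywhere, hence nearly everywhere, one may simply quote Theorem \ref{THEOREM} directly.

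I do not expect a genuine obstacle in this argument: its whole content is the observation that $F$ plays the role of both a major and a minor function of $f$. The only point that demands a moment's attention is verifying the finiteness clauses in the definitions of major and minor function, and that is immediate because $f$ takes only real values.
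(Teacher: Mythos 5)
Your argument hinges on $F$ being continuous, but continuity of $F$ is not among the hypotheses of this theorem, and that omission is the whole point of stating it separately from Theorem \ref{THEOREM}. The paper's definition of a major (resp.\ minor) function explicitly requires the function to be continuous, so without first establishing continuity you cannot declare $F$ to be a major and a minor function of $f$, and Theorem \ref{Theorem 1}\ref{theorem 3.8} (equivalently Lemma \ref{increasinglemma}) does not apply; your proposed shortcut of quoting Theorem \ref{THEOREM} directly is unavailable for the same reason, since that theorem also assumes $F$ continuous. The hypothesis $LD_{1}F=f$ on $[a,b]$ only requires $F$ to be Perron integrable near each point and yields Laplace continuity of $F$, which is strictly weaker than continuity. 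Indeed, if continuity of $F$ were part of the hypotheses, the present theorem would be an immediate special case of Theorem \ref{THEOREM} (as you yourself observe) and there would be nothing left to prove; the fact that the paper supplies a separate argument is a signal that $F$ is not assumed continuous here.

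The paper's own proof proceeds differently: it notes that the everywhere-existence of $LD_{1}F$ makes $F$ Laplace continuous on $[a,b]$, and then invokes Theorem 3.7 of \cite{OLC}, a monotonicity-type result valid for Laplace continuous (not necessarily continuous) functions, to compare $F$ against the major and minor functions of $f$ and conclude that $F$ is a primitive. To repair your argument you would need either to add continuity of $F$ as an explicit hypothesis (thereby proving a strictly weaker statement) or to replace the appeal to the major/minor-function machinery with the corresponding monotonicity theorem for Laplace continuous functions, which is exactly the route the paper takes. The remaining steps of your write-up (the finiteness of $\underline{LD}_{1}F$, the two applications of monotonicity, and the evaluation at $x=a$) are fine once that single issue is resolved.
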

	\begin{proof}
		It is evident that $F$ is Laplace continuous on $[a,b]$. Now using Theorem 3.7 of \cite{OLC}, it can be easily proved that $F$ is a primitive of $f$.
	\end{proof}

	\begin{theorem}[\bf Fundamental theorem of calculus II]\label{A.E. Laplace Differentiability}
		Let $f\in \mathcal{LP}[a,b]$ and let $F(x)=\int_{a}^{x}f$, $x\in [a,b]$. Then $LD_{1}F=f$ a.e. on $[a,b]$.
	\end{theorem}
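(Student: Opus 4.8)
The plan is to trap the Laplace derivates of the primitive $F$ between $f$ minus a small non‑negative error and $f$ plus a small non‑negative error, using a major function for the upper comparison, a minor function for the lower one, and the a.e.\ differentiability of monotone functions to make the errors small in $L^{1}$. Fix $\epsilon>0$. By Corollary \ref{corollary_1} there are a major function $U$ and a minor function $V$ of $f$ with $U_{a}^{b}-V_{a}^{b}<\epsilon$. The functions $G:=U-F$ and $H:=F-V$ are continuous (by Theorem \ref{Continuous primitive}, $F$ is continuous, and $U,V$ are continuous by definition) and, by Theorem \ref{Theorem 1}\ref{theorem 3.8}, non‑decreasing. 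Hence, by Lebesgue's theorem on monotone functions, $G$ and $H$ are a.e.\ differentiable with finite non‑negative derivatives, and
\[
\int_{a}^{b}(G'+H')\;\leqslant\;G_{a}^{b}+H_{a}^{b}\;=\;(U_{a}^{b}-F_{a}^{b})+(F_{a}^{b}-V_{a}^{b})\;=\;U_{a}^{b}-V_{a}^{b}\;<\;\epsilon.
\]

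Next I would transport ordinary differentiability of $G$ and $H$ into the Laplace calculus. A continuous function is Perron integrable on every interval, so the comparison $\underline{D}g\leqslant\underline{LD}_{1}g\leqslant\overline{LD}_{1}g\leqslant\overline{D}g$ with the Dini derivates applies to $G$ and $H$; therefore $LD_{1}G=G'$ and $LD_{1}H=H'$ at a.e.\ point. At such a point $x$, the Laplace difference quotients of $G$ converge from both sides to the finite value $G'(x)$, so subtracting them in the identity $F=U-G$ gives
\[
\underline{LD}_{1}F(x)=\underline{LD}_{1}U(x)-LD_{1}G(x)\;\geqslant\; f(x)-G'(x),
\]
where $\underline{LD}_{1}U\geqslant f$ because $U$ is a major function. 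Symmetrically, from $F=V+H$ and the defining property $\overline{LD}_{1}V\leqslant f$ of the minor function $V$, one gets $\overline{LD}_{1}F(x)\leqslant f(x)+H'(x)$. Hence for a.e.\ $x$,
\[
f(x)-G'(x)\;\leqslant\;\underline{LD}_{1}F(x)\;\leqslant\;\overline{LD}_{1}F(x)\;\leqslant\;f(x)+H'(x),
\]
and, since $G',H'\geqslant 0$, both $|\underline{LD}_{1}F-f|$ and $|\overline{LD}_{1}F-f|$ are $\leqslant G'+H'$ a.e.

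To finish I would let $\epsilon\downarrow 0$. For each $m\in\mathbb{N}$, the set $\{x:|\overline{LD}_{1}F(x)-f(x)|>1/m\}$ is contained, up to a null set, in $\{x:G'(x)+H'(x)>1/m\}$, which by Chebyshev's inequality has measure at most $m\int_{a}^{b}(G'+H')<m\epsilon$; since $\epsilon$ was arbitrary, this set has outer measure $0$, and taking the union over $m$ shows $\overline{LD}_{1}F=f$ a.e. The same reasoning gives $\underline{LD}_{1}F=f$ a.e., and since each one‑sided lower Laplace derivate is dominated by the corresponding one‑sided upper derivate, all four one‑sided Laplace derivates of $F$ coincide with $f$ a.e.; that is, $LD_{1}F=f$ a.e.\ on $[a,b]$.

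The step I expect to need the most care is the linearity identity $\underline{LD}_{1}(U-G)(x)=\underline{LD}_{1}U(x)-LD_{1}G(x)$ (and its counterpart for $\overline{LD}_{1}$): it rests on the elementary fact that subtracting from a family of difference quotients another family that tends to a finite limit leaves the $\liminf$ (resp.\ $\limsup$) unchanged — which is exactly why one works with the monotone functions $G,H$, whose derivatives are finite a.e., rather than with $U,V$ directly. The remaining ingredients (monotonicity of $G,H$ via Theorem \ref{Theorem 1}\ref{theorem 3.8}, the estimate on $\int(G'+H')$, and the concluding measure estimate) are routine.
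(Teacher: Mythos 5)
Your proof is correct and rests on exactly the same mechanism as the paper's: the differences $U-F$ and $F-V$ are non-decreasing, hence a.e.\ differentiable with $\int_a^b$ of their derivatives bounded by $U_a^b-V_a^b<\epsilon$, and at points of differentiability the Laplace derivates of $F$ and of $U$ (resp.\ $V$) differ exactly by that ordinary derivative, which is then forced to be small outside a set of small measure. The only difference is packaging — you run the sandwich directly with both comparison functions and finish with a Chebyshev estimate, while the paper argues by contradiction on one side at a time — so this is essentially the paper's proof.
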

	\begin{proof}
		Suppose $LD_{1}F\neq f$ on a positive measure subset of $[a,b]$. Then $\exists$ a positive constant $k$ such that at least one of the following two sets
		\begin{align*}
		E_{1}&=\left\{ x\in [a,b]\mid f(x)-\underline{LD}_{1}F(x)>k     \right \}\\
		E_{2}&=\left\{ x\in [a,b]\mid \overline{LD}_{1}F(x)-f(x)>k     \right\}
		\end{align*}
		is of positive measure. Assume $\lambda(E_{2})=\mu>0$. Let $0<\epsilon<\frac{1}{2}k\mu$ and let $V$ be a minor function of $f$ such that $(F-V)_{a}^{x}<\epsilon$ for all $x\in [a,b]$. Define $R=F-V$ on $[a,b]$. Then $R$ is non-decreasing and $\int_{a}^{b}R'\leqslant R(b)-R(a)<\epsilon$. The set $E_{3}=\left\{ x\in [a,b]\mid R'(x)>\frac{1}{2}k  \right\}$ is of measure less than $\mu$; otherwise, we will get $\epsilon>\frac{1}{2}k\mu$, which is a contradiction. Hence $E_{2}\setminus E_{3}$ is nonempty. Now for $x\in E_{2}\setminus E_{3}$, we have $f(x)\geqslant \overline{LD}_{1}V(x)=\overline{LD}_{1}F(x)-R'(x)\geqslant \overline{LD}_{1}F(x)-\frac{1}{2}k$, which is again a contradiction. Therefore $LD_{1}F=f$ a.e. on $[a,b]$.
	\end{proof}

	\begin{corollary}
		If $f\in \mathcal{LP}[a,b]$, then $f$ is measurable.
	\end{corollary}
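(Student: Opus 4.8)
The plan is to realise $f$ as almost everywhere equal to a genuinely measurable function built from its primitive, and then to use the completeness of Lebesgue measure. By Theorem~\ref{Continuous primitive} the function $F(x)=\int_a^x f$ is continuous on $[a,b]$, and by Theorem~\ref{A.E. Laplace Differentiability} we have $f=LD_1F$ a.e.\ on $[a,b]$; in particular $f=\underline{LD}_1^{+}F$ a.e.\ on $[a,b)$. Hence it suffices to prove that the (extended-real-valued) function $x\mapsto\underline{LD}_1^{+}F(x)$ is measurable on $[a,b)$; the symmetric argument gives measurability of $\underline{LD}_1^{-}F$ on $(a,b]$, and $\{a,b\}$ is null.

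First I would fix $k\in\mathbb{N}$ with $b-\tfrac1k>a$ and work on $J_k=[a,b-\tfrac1k]$. For $x\in J_k$ and $t\in[0,\tfrac1k]$ we have $x+t\in[a,b]$, so
\[
\Phi(x,s)\;:=\;s^{2}\int_{0}^{1/k}e^{-st}\bigl[F(x+t)-F(x)\bigr]\,dt\;=\;s^{2}\int_{0}^{1/k}e^{-st}F(x+t)\,dt\;-\;sF(x)\bigl(1-e^{-s/k}\bigr)
\]
is well defined for every $s>0$, and for each fixed $s$ the map $x\mapsto\Phi(x,s)$ is continuous on $J_k$, since $F$ is continuous and the integrand $e^{-st}F(x+t)$ is bounded. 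By Lemma~\ref{lemma11} the value of $\underline{LD}_1^{+}F(x)$ does not change if it is computed with window width $\tfrac1k$, so $\underline{LD}_1^{+}F(x)=\liminf_{s\to\infty}\Phi(x,s)$ for all $x\in J_k$.

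Now, for each $n\in\mathbb{N}$ the function $x\mapsto\inf_{s\ge n}\Phi(x,s)$ is upper semicontinuous on $J_k$, being the infimum of a family of continuous functions, hence Borel measurable; consequently $\underline{LD}_1^{+}F=\sup_{n\in\mathbb{N}}\inf_{s\ge n}\Phi(\cdot,s)$ is measurable on $J_k$. Since $[a,b)=\bigcup_{k}J_k$, $\underline{LD}_1^{+}F$ is measurable on $[a,b)$. Together with the analogous statement for $\underline{LD}_1^{-}F$ on $(a,b]$ this shows $\underline{LD}_1F$ is measurable on $[a,b]$, so $f$, being a.e.\ equal to it, is measurable by the completeness of Lebesgue measure.

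The argument is short, and the one delicate point is the reduction from the continuous limit parameter $s\to\infty$ to the sequential expression $\sup_n\inf_{s\ge n}$; this is exactly what the continuity of $\Phi(\cdot,s)$ in $x$ for each fixed $s$, together with Lemma~\ref{lemma11} (which lets us fix a single convenient window width on each $J_k$), are there to supply. Everything else — boundedness of $F$ on the compact interval, and measurability of suprema and infima of such families — is routine.
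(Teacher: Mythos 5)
Your proof is correct and follows essentially the same route as the paper: both realise $f$ a.e.\ as the $s\to\infty$ limit of the continuous functions $x\mapsto s^{2}\int_{0}^{\delta}e^{-st}[F(x+t)-F(x)]\,dt$ (via Theorem~\ref{A.E. Laplace Differentiability}) and conclude measurability from being an a.e.\ limit of continuous functions. The paper simply takes the sequence $s=n$ and cites pointwise a.e.\ convergence, whereas you render the same idea more carefully through $\liminf=\sup_{n}\inf_{s\geqslant n}$ and upper semicontinuity; the extra care is harmless but not a different argument.
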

	\begin{proof}
		Define $F_{n}(x)= n^{2}\int_{0}^{\delta}e^{-nt}[F(x+t)-F(x)]$. Then $F_{n}$ is continuous on $[a,b]$ and $\lim\limits_{n\rightarrow \infty}F_{n}(x) = f(x)$ a.e. on $[a,b]$, which implies $f$ is measurable.
	\end{proof}

	\par In \cite{OLC}, the Laplace continuity is defined for Perron integrable functions. Here we redefine the Laplace continuity $f$ at $x$ by assuming it is Laplace integrable.

	\begin{definition}\label{Laplace continuity}
		Let $f$ be Laplace integrable on a neighbourhood of $x$. If $\exists\, \delta>0$ such that the following limits
		\begin{align*}
		\lim\limits_{s\rightarrow \infty}s\int_{0}^{\delta}e^{-st}f(x+t)\,dt\quad\text{and}\quad\lim\limits_{s\rightarrow \infty}s\int_{0}^{\delta}e^{-st}f(x-t)\,dt
		\end{align*}
		exist and are equal, then the common value is denoted by $LD_{0}f(x)$. And we say $f$ is Laplace continuous at $x$ if $LD_{0}f(x)=f(x)$.
	\end{definition}

	\par The above definition also does not depend on $\delta$. It can be easily proved that the continuity of $f$ at $x$ implies $LD_{0}f(x)=f(x)$.

	\begin{theorem}\label{Continuity implies differentiability}
		Let $f\in \mathcal{LP}[a,b]$ and let $F(x)=\int_{a}^{x}f$.
		\begin{enumerate}[\upshape A.]
			\item If $f$ is Laplace continuous at $x\in [a,b]$, then $LD_{1}F(x)=f(x)$.
			\item If $f$ is continuous at $x\in [a,b]$, then $F^{'}(x)=f(x)$
		\end{enumerate}
	\end{theorem}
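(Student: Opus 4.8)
The plan is to treat the two assertions independently: part B by a direct estimate on the difference quotient of $F$, and part A by reducing $LD_{1}F(x)$ to $LD_{0}f(x)=f(x)$ by means of an integration-by-parts identity for the Laplace integral. For part B, fix $x\in[a,b]$ where $f$ is continuous and let $\epsilon>0$; choose $\eta>0$ with $|f(y)-f(x)|\le\epsilon$ whenever $y\in[a,b]$ and $|y-x|<\eta$. The constant function equal to $f(x)$ is Laplace integrable on every subinterval (its primitive $t\mapsto f(x)\,t$ is continuous with $LD_{1}$ equal to $f(x)$ everywhere, so Theorem \ref{THEOREM} applies), so by the linearity in Theorem \ref{Theorem 1} the function $y\mapsto f(y)-f(x)$ is Laplace integrable on $[x,x+h]$ for $0<h<\eta$ with $x+h\le b$, and lies between $-\epsilon$ and $\epsilon$ there. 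The monotonicity clause of Theorem \ref{Theorem 1} then gives
\[
-\epsilon h\ \le\ \int_{x}^{x+h}\bigl(f-f(x)\bigr)\ =\ F(x+h)-F(x)-h\,f(x)\ \le\ \epsilon h ,
\]
so $\bigl|\,(F(x+h)-F(x))/h-f(x)\,\bigr|\le\epsilon$; the case $h<0$ is identical, and at the endpoints $F'$ is read as a one-sided derivative. Letting $\epsilon\downarrow0$ yields $F'(x)=f(x)$.

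For part A, Laplace continuity of $f$ at $x$ furnishes $\delta>0$ with $\lim_{s\to\infty}s\int_{0}^{\delta}e^{-st}f(x+t)\,dt=f(x)=\lim_{s\to\infty}s\int_{0}^{\delta}e^{-st}f(x-t)\,dt$, and by the $\delta$-independence noted after Definition \ref{Laplace continuity} the same holds for every $\delta$. Put $G(t)=F(x+t)-F(x)$; by additivity (Theorem \ref{Theorem 1}) and translation invariance of the integral, $G(t)=\int_{0}^{t}f(x+r)\,dr$, $G$ is continuous and $G(0)=0$. The crux is the identity
\[
s^{2}\!\int_{0}^{\delta}e^{-st}G(t)\,dt\ =\ s\!\int_{0}^{\delta}e^{-st}f(x+t)\,dt\ -\ s\,e^{-s\delta}G(\delta),
\]
which is integration by parts: $\int_{0}^{\delta}e^{-st}\,dG=\bigl[e^{-st}G(t)\bigr]_{0}^{\delta}+s\int_{0}^{\delta}e^{-st}G(t)\,dt$, using $G(0)=0$. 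Since $\delta$ and $G(\delta)$ are fixed, $s\,e^{-s\delta}G(\delta)\to0$, so letting $s\to\infty$ gives $LD_{1}^{+}F(x)=f(x)$. Running the same computation on $[x-\delta,x]$ with $\widetilde G(t)=F(x-t)-F(x)=-\int_{0}^{t}f(x-r)\,dr$ yields
\[
(-s^{2})\!\int_{0}^{\delta}e^{-st}\widetilde G(t)\,dt\ =\ s\!\int_{0}^{\delta}e^{-st}f(x-t)\,dt\ +\ s\,e^{-s\delta}\widetilde G(\delta)\ \longrightarrow\ f(x)\quad(s\to\infty),
\]
so $LD_{1}^{-}F(x)=f(x)$ as well, whence $LD_{1}F(x)=f(x)$ (with one-sided readings at the endpoints).

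The whole argument rests on the integration-by-parts identity, and this is where the work lies: unlike Perron primitives, Laplace primitives need not be $ACG^{*}$ (Corollary \ref{Not ACG^{*}}), so one cannot simply quote the classical Henstock integration-by-parts theorem, and one must first know that $t\mapsto e^{-st}f(x+t)$ is itself Laplace integrable on $[0,\delta]$ (this is already tacit in Definition \ref{Laplace continuity}). I would obtain the identity either from the integration-by-parts theorem for the Laplace integral announced in the abstract, or directly: establish the product rule $LD_{1}(g\Phi)=g'\Phi+g\,LD_{1}\Phi$ for $g\in C^{1}$ — by splitting $g(t+h)\Phi(t+h)-g(t)\Phi(t)$ into $g(t+h)\bigl(\Phi(t+h)-\Phi(t)\bigr)+\bigl(g(t+h)-g(t)\bigr)\Phi(t)$ and using the concentration of the kernel $s^{2}e^{-st}$ near $h=0$ together with continuity of $\Phi$ — apply it with $g(t)=e^{-st}$ and $\Phi(t)=\int_{0}^{t}f(x+r)\,dr$, and then invoke Theorems \ref{THEOREM} and \ref{THEOREm}; the one genuinely delicate point in this second route is upgrading the resulting Laplace-derivative identities from "almost everywhere" to "nearly everywhere", which is handled exactly as in the proof of Theorem \ref{THEOREm}. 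Once the identity is available, part A is the two-line limit computation above, and part B uses none of it.
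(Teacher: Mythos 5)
Your argument is correct. Note first that the paper does not actually write out a proof of this theorem: part A is dispatched by citing Theorem 3.2 of \cite{OLC} (where the analogous computation is done in the Perron setting) and part B is declared straightforward, so your proposal supplies exactly the details the paper omits. Part B is the expected squeeze via the linearity and monotonicity clauses of Theorem \ref{Theorem 1}, and is fine. For part A, your identity
\[
s^{2}\int_{0}^{\delta}e^{-st}G(t)\,dt=s\int_{0}^{\delta}e^{-st}f(x+t)\,dt-s\,e^{-s\delta}G(\delta)
\]
is the right mechanism, and it does follow rigorously from the paper's own Theorem \ref{IntByParts1} applied on $[0,\delta]$ to $f(x+\cdot)$ with $g(r)=-se^{-sr}$ (whose primitive is $e^{-st}-1$), after adding back $\int_{0}^{\delta}f(x+t)\,dt=G(\delta)$; you are also right that the classical Henstock integration-by-parts theorem cannot simply be quoted here, since the primitive need not be $ACG^{*}$ (Corollary \ref{Not ACG^{*}}). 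Two presentational caveats: Theorem \ref{IntByParts1} appears \emph{after} this theorem in the paper, so you should record that its proof rests only on Lemma \ref{lemmaforintp2} and Corollary \ref{corollary_1} and not on Theorem \ref{Continuity implies differentiability}, so there is no circularity (the first place both are used together is Taylor's theorem); and your alternative second route through the pointwise product rule is dispensable and, as you note, sketchier on the ``a.e.\ versus nearly everywhere'' point, so the integration-by-parts route should be taken as the proof.
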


	\par Proof of the first part is similar to that of Theorem 3.2 of \cite{OLC} and the second part is straightforward. Let $I$ be an interval and let $\mathcal{BLC}(I)$ be the set of all bounded Laplace continuous functions on $I$. As bounded Laplace integrable functions are Lebesgue integrable, by the Corollary 3.13 of \cite{OLC}, we get $\left( \mathcal{BLC}(I), \|\,.\,\|_{\infty} \right)$ is complete, where $\|f\|_{\infty}=\sup\limits_{x\in I}|f(x)|$. Now define
	\[
	\mathcal{BLC}^{n}(I)=\left\{ (f_{1},...,f_{n})^{T}\mid f_{i}\in\mathcal{BLC}(I)\quad\text{for each $i=1,...,n$}  \right\}
	\]
	and
	\[
	\|(f_{1},...,f_{n})^{T}\|_{n,\infty}=\max\limits_{1\leqslant i \leqslant n}\left\{\|f_{i}\|_{\infty}\right\},
	\]
	where by $(f_{1},...,f_{n})^{T}$ we mean the transpose of $(f_{1},...,f_{n})$. Then we get
	\begin{theorem}\label{Complete-bounded-Laplace}
		Let $I$ be an interval. Then $\left( \mathcal{BLC}^{n}(I), \|\,.\,\|_{n,\infty} \right)$ is complete. 
	\end{theorem}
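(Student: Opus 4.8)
The plan is to deduce the completeness of the product space directly from the already noted completeness of $\left(\mathcal{BLC}(I),\|\cdot\|_{\infty}\right)$, via the standard observation that a finite product of complete normed spaces equipped with the maximum norm is complete. So the whole argument is a coordinatewise reduction.

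First I would fix an arbitrary Cauchy sequence $\{g^{(m)}\}_{m\in\mathbb{N}}$ in $\left(\mathcal{BLC}^{n}(I),\|\cdot\|_{n,\infty}\right)$ and write $g^{(m)}=(f_{1}^{(m)},\dots,f_{n}^{(m)})^{T}$. Since $\|f_{i}^{(m)}-f_{i}^{(l)}\|_{\infty}\leqslant\|g^{(m)}-g^{(l)}\|_{n,\infty}$ for every $i\in\{1,\dots,n\}$ and all $m,l$, each coordinate sequence $\{f_{i}^{(m)}\}_{m}$ is Cauchy in $\left(\mathcal{BLC}(I),\|\cdot\|_{\infty}\right)$. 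By the completeness of that space there exist $f_{i}\in\mathcal{BLC}(I)$ with $\|f_{i}^{(m)}-f_{i}\|_{\infty}\to 0$ as $m\to\infty$; this is the one place the completeness of the factor is genuinely used, since it simultaneously provides the limit and guarantees that $f_{i}$ is again bounded and Laplace continuous, so that the candidate $g=(f_{1},\dots,f_{n})^{T}$ lies in $\mathcal{BLC}^{n}(I)$.

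Finally I would check convergence in the product norm: given $\epsilon>0$, pick $N_{i}$ with $\|f_{i}^{(m)}-f_{i}\|_{\infty}<\epsilon$ for $m\geqslant N_{i}$, set $N=\max_{1\leqslant i\leqslant n}N_{i}$, and observe that for $m\geqslant N$ one has $\|g^{(m)}-g\|_{n,\infty}=\max_{1\leqslant i\leqslant n}\|f_{i}^{(m)}-f_{i}\|_{\infty}<\epsilon$. Hence $g^{(m)}\to g$ in $\mathcal{BLC}^{n}(I)$, and the space is complete. There is no real obstacle here; the only point deserving a word is verifying that the limit tuple still belongs to the space, which is exactly what completeness of each coordinate factor supplies.
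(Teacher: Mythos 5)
Your argument is correct and is exactly the route the paper intends: the paper states this theorem without proof, treating it as the immediate consequence of the completeness of $\left(\mathcal{BLC}(I),\|\cdot\|_{\infty}\right)$ together with the standard fact that a finite product of complete normed spaces under the maximum norm is complete, which is precisely your coordinatewise reduction.
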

	\par Let $F(x)=\int_{a}^{x}f$ and let $\mathcal{LP}^{*}[a,b]$ be the set of all equivalence classes of Laplace integrable functions, where the equivalence relation is ``$f\sim g$ iff $f=g$ a.e. on $[a,b]$". Define $\|f\|_{[a,b]}=\|F\|_{\infty}$ for $f\in \mathcal{LP}[a,b]$. Then $(\mathcal{LP}^{*}[a,b],\|\,.\,\|_{[a,b]})$ forms a normed linear space. The norm, $\|\,.\,\|_{[a,b]}$, is generally known as the Alexiewicz's norm (\cite{LFDIF}). For convenience, we will denote $\mathcal{LP}^{*}[a,b]$ by $\mathcal{LP}[a,b]$ and assume that it will not confuse.

	\begin{theorem}\label{incomplete}
		$(\mathcal{LP}[0,1],\|\,.\,\|_{[0,1]})$ is not complete.
	\end{theorem}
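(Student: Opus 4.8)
The plan is to exhibit a Cauchy sequence in $(\mathcal{LP}[0,1],\|\,.\,\|_{[0,1]})$ whose limit, in the Alexiewicz norm, cannot be (the equivalence class of) a Laplace integrable function. The natural source of such a sequence is a primitive that is continuous on $[0,1]$ but whose ``derivative'' (in the Laplace sense) fails to exist finitely on a set of positive measure, so that no $g\in\mathcal{LP}[0,1]$ can have it as primitive. Concretely, I would pick a continuous function $\Phi\colon[0,1]\to\mathbb{R}$ that is nowhere Laplace differentiable on a set of positive measure (for instance a Weierstrass-type function, or a suitable lacunary trigonometric series, which has $\underline{LD}_1\Phi=-\infty$ or oscillating Laplace derivates on a large set), and approximate it uniformly by a sequence of smooth functions $F_n$ (e.g. partial sums, or mollifications $F_n=\Phi*\rho_{1/n}$, renormalised so $F_n(0)=0$). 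Each $F_n$ is $C^\infty$, hence $f_n:=F_n'=LD_1F_n\in\mathcal{LP}[0,1]$ with primitive $F_n$, so $f_n\in\mathcal{LP}[0,1]$ and $\|f_n-f_m\|_{[0,1]}=\|F_n-F_m\|_\infty\to 0$; thus $(f_n)$ is Cauchy.

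The second step is to show the sequence has no limit in $\mathcal{LP}[0,1]$. Suppose $f_n\to g$ in $\|\,.\,\|_{[0,1]}$ with $g\in\mathcal{LP}[0,1]$, and let $G(x)=\int_0^x g$. By definition of the norm, $\|F_n-G\|_\infty=\|f_n-g\|_{[0,1]}\to 0$, so $F_n\to G$ uniformly; since $F_n\to\Phi$ uniformly by construction, $G=\Phi$ on $[0,1]$. But $G$ is the primitive of a Laplace integrable function, and by Theorem~\ref{A.E. Laplace Differentiability} we would have $LD_1G=g$ a.e., i.e. $\Phi$ is Laplace differentiable a.e.\ on $[0,1]$. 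This contradicts the choice of $\Phi$. Hence no such $g$ exists and the space is incomplete.

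The main obstacle is producing a clean, verifiable $\Phi$ together with an explicit uniformly-convergent approximating sequence of Laplace-integrable-primitive type, while being sure that $\Phi$ genuinely fails Laplace differentiability on a positive-measure set --- Laplace differentiability is weaker than ordinary differentiability (it sits between the Dini derivates, via Theorem 2.18.3 of \cite{HOD}), so ``nowhere differentiable'' alone is not enough; one needs the Laplace derivates themselves to disagree or be infinite on a big set. A convenient route is to reuse the machinery already developed: the function $f$ of Section~\ref{NeWexample} has a Laplace-integrable non-Perron-integrable ``derivative'' $\phi=LD_1f$, and more to the point $f$ is not $ACG^{*}$; iterating a similar construction, or stacking scaled copies on the contiguous intervals of a fat Cantor set so that the resulting primitive is Laplace continuous but its would-be second-order behaviour degenerates, yields a $\Phi$ that is not Laplace differentiable a.e. Alternatively, and perhaps most economically, one takes $\Phi$ to be any continuous function that is not $VBG_*$ on any portion and argues as above; the only real work is certifying the a.e.\ non-Laplace-differentiability, which I would do by a direct estimate of the Laplace difference quotients $s^2\int_0^\delta e^{-st}[\Phi(x+t)-\Phi(x)]\,dt$ along a lacunary sequence of scales $s$, exactly in the spirit of Theorem~\ref{remarkable lemma1}. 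Once $\Phi$ is fixed, the rest of the argument is the short two-paragraph contradiction above.
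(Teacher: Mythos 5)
Your overall skeleton is the same as the paper's: take a continuous function $\Phi$ that cannot be the indefinite Laplace integral of anything, approximate it uniformly by primitives $F_n$ of Laplace integrable functions $f_n$, observe that $\{f_n\}$ is Cauchy in the Alexiewicz norm, and derive a contradiction from Theorem~\ref{A.E. Laplace Differentiability} if a limit $g$ existed. That frame is sound (and the approximation step is even easier than you make it: by Weierstrass, polynomials already suffice, no mollification needed). The genuine gap is in the choice and certification of $\Phi$. You require a continuous function that fails to be Laplace differentiable on a set of positive measure, and you correctly flag that this is the hard part --- Laplace derivates sit between the Dini derivates, so nowhere-differentiability of a Weierstrass-type function proves nothing, and ``not $VBG_*$ on any portion'' does not by itself rule out a.e.\ Laplace differentiability either. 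None of your candidate constructions is carried out, so the proof as written rests on an unproved (and nontrivial) existence claim.

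The paper avoids this difficulty entirely by choosing $\Phi=\theta$, the Cantor singular function. Here the obstruction is not failure of a.e.\ Laplace differentiability --- $\theta'=0$ a.e., hence $LD_1\theta=0$ a.e.\ --- but singularity: if $\int_0^x g=\theta(x)$ for some $g\in\mathcal{LP}[0,1]$, then Theorem~\ref{A.E. Laplace Differentiability} forces $g=LD_1\theta=0$ a.e., whence $\theta\equiv 0$ by Theorem~\ref{Theorem 1}\ref{A.E. Equal Functions}, a contradiction. In other words, the contradiction you want is available already from a function that \emph{is} Laplace differentiable a.e.\ but is not recovered by integrating that derivative. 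I suggest you replace your $\Phi$ by the Cantor function (or any continuous nonconstant singular function) and rerun your second paragraph with this modified final step; everything else in your argument then goes through, and the one step you identified as the ``main obstacle'' disappears.
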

	\begin{proof}
		Let $P$ be the Cantor middle third subset in $[0,1]$ and let $\theta$ be the Cantor singular function on $[0,1]$. Let $I_{nk}, k=1,...,2^{n-1}$, be the open intervals, which are removed from $[0,1]$ in the nth step to construct $P$. Let
		\begin{gather*}
		G_{n}=\bigcup\limits_{j=1}^{n}\bigcup\limits_{k=1}^{2^{j-1}}I_{jk}\quad\text{and}\quad H_{n}=[0,1]\setminus G_{n}=\bigcup\limits_{i=1}^{2^{n}}[a_{i},b_{i}],
		\end{gather*}
		where the intervals $\displaystyle [a_{i},b_{i}]$ are the disjoint closed sub-intervals of the closed set $H_{n}$. Now define for each $n\in\mathbb{N}$
		\begin{align*}
		f_{n}(x)=
		\begin{cases}
		\qquad\qquad\qquad\theta(x)&\text{if $x\in G_{n}$}\\
		\theta(a_{i})+[\theta(b_{i})-\theta(a_{i})]\omega\left(\dfrac{x-a_{i}}{b_{i}-a_{i}}\right)&\text{if $x\in[a_{i},b_{i}],\quad i=1,...,2^{n}$},
		\end{cases}
		\end{align*}
		where
		\[
		\omega(x)=\dfrac{2x^{2}(3-2x)+\int_{0}^{x}t(1-t)\sin(1/t)\,dt}{2+\int_{0}^{1}t(1-t)\sin(1/t)\,dt}.
		\]
		Then from Theorem 3.4 of \cite{INTRSE}, we get $f_{n}^{'}$ exists on $[0,1]$, $f_{n}(x)=(\mathsf{P})\int_{0}^{x}f_{n}^{'}$ and $\|f_{n}-\theta\|_{\infty}\rightarrow 0$ as $n\rightarrow\infty$. Thus $\{ f_{n}^{'} \}$ is a Cauchy sequence in $(\mathcal{LP}[0,1],\|\,.\,\|_{[0,1]})$. Now if $\{ f_{n}^{'} \}$ converges to some $g\in \mathcal{LP}[0,1]$, then $\int_{0}^{x}g=\theta(x)$ for all $x\in[0,1]$. As $\theta^{'}=0$ a.e. on $[0,1]$, by Theorem \ref{A.E. Laplace Differentiability}, we get $LD_{1}\theta=\theta^{'}=g=0$ a.e. on $[0,1]$. Which implies $\theta=0$ on $[0,1]$ (Theorem \ref{Theorem 1}\ref{A.E. Equal Functions}), a contradiction. Hence $\{ f_{n}^{'} \}$ cannot converge in $(\mathcal{LP}[0,1],\|\,.\,\|_{[0,1]})$ and this completes the proof.
	\end{proof}

	\begin{theorem}\label{Denseness of plynomials}
		Let $\mathcal{P}[a,b]$ be the set of all polynomials on $[a,b]$. Then $\mathcal{P}[a,b]$ is dense in $\left(\mathcal{LP}[a,b],\|\,.\,\|_{[a,b]}\right)$.
	\end{theorem}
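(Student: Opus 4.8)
The plan is to show that polynomials are dense by exploiting the fact that the Alexiewicz norm $\|f\|_{[a,b]}=\|F\|_\infty$ measures the distance between primitives in the sup-norm. So it suffices to show that the set of primitives of polynomials — which is again the set of all polynomials (up to the normalisation $F(a)=0$) — is sup-norm dense in the set of all Laplace primitives. By the Weierstrass approximation theorem, polynomials are dense in $C[a,b]$ with respect to $\|\cdot\|_\infty$, so in fact every continuous function with $F(a)=0$ is a sup-norm limit of polynomials $p_n$ with $p_n(a)=0$.

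First I would fix $f\in\mathcal{LP}[a,b]$ and set $F(x)=\int_a^x f$, which is continuous on $[a,b]$ by Theorem \ref{Continuous primitive}, with $F(a)=0$. Next, given $\epsilon>0$, I would invoke the Weierstrass approximation theorem to obtain a polynomial $q$ with $\|F-q\|_\infty<\epsilon$; replacing $q$ by $q-q(a)$ (which changes $q$ uniformly by at most $\epsilon$, hence $\|F-(q-q(a))\|_\infty<2\epsilon$) we may assume $q(a)=0$. Then $q$ is the primitive of its derivative $q'$, a polynomial, in the sense that $q(x)-q(a)=\int_a^x q'$ (here the Laplace integral of a polynomial agrees with the ordinary/Perron integral since polynomials are continuously differentiable, so Theorem \ref{THEOREM} or the classical fundamental theorem applies). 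Therefore the primitive of $f-q'$ is $F-q$, and $\|f-q'\|_{[a,b]}=\|F-q\|_\infty<2\epsilon$. Since $\epsilon$ was arbitrary and $q'$ is a polynomial, this shows $\mathcal{P}[a,b]$ is dense in $(\mathcal{LP}[a,b],\|\cdot\|_{[a,b]})$.

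There is essentially no hard obstacle here; the only points requiring a moment of care are the bookkeeping about the normalisation $F(a)=0$ (handled by subtracting the constant $q(a)$), and the observation that a polynomial genuinely is the Laplace primitive of its classical derivative, so that passing from a sup-norm approximation of $F$ to an Alexiewicz-norm approximation of $f$ is legitimate. The latter is immediate from the definition $\|g\|_{[a,b]}=\|G\|_\infty$ together with linearity of the Laplace integral (Theorem \ref{Theorem 1}), since $\int_a^x(f-q')=F(x)-q(x)$.
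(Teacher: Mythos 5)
Your proof is correct and follows essentially the same route as the paper: approximate the primitive $F$ uniformly by polynomials via Weierstrass, differentiate, and use the identity $\|f-q'\|_{[a,b]}=\|F-(q-q(a))\|_\infty$. Your extra care with the normalisation $q(a)=0$ is a small but genuine improvement, since the paper's bare claim $\|F-q_n\|_\infty=\|f-q_n'\|_{[a,b]}$ tacitly assumes it.
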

	\begin{proof}
		Let $f\in \mathcal{LP}[a,b]$ and let $F(x)=\int_{a}^{x}f$. As $F$ is continuous on $[a,b]$, by Weierstrass approximation theorem, we can choose a sequence of polynomials converging to $F$ uniformly on $[a,b]$. Let $\left\{q_{n}\right\}$ be such a sequence in $\mathcal{P}[a,b]$. It is evident that $\{q_{n}^{'}\}$ is also a sequence in $\mathcal{P}[a,b]$. Moreover, as $\|F-q_{n}\|_{\infty}=\|f-q_{n}^{'}\|_{[a,b]}$, $\{q_{n}^{'}\}$ converges to $f$ in $\left(\mathcal{LP}[a,b],\|\,.\,\|_{[a,b]}\right)$. This completes the proof.
	\end{proof}

	\begin{theorem}\label{Non-negative}
		If $f\in \mathcal{LP}[a,b]$ and  $f\geqslant 0$ a.e. on $[a,b]$, then $f$ is Lebesgue integrable.
	\end{theorem}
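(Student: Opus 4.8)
The plan is to show that the primitive $F(x)=\int_a^x f$ is non-decreasing, and then invoke a classical result that a continuous non-decreasing function is $BV$, hence its derivative $F'$ exists a.e., is non-negative, and is Lebesgue integrable; the final step is to identify $F'$ with $f$ a.e. using the Fundamental theorem of calculus II (Theorem \ref{A.E. Laplace Differentiability}). First I would use Theorem \ref{Theorem 1}\ref{A.E. Equal Functions} to replace $f$ by a genuinely everywhere-nonnegative representative without changing the integral, so we may assume $f\geqslant 0$ on all of $[a,b]$. Then for $[x_1,x_2]\subseteq[a,b]$, applying Theorem \ref{Theorem 1}\ref{theorem 3.8} and part B gives $F_{x_1}^{x_2}=\int_{x_1}^{x_2} f$, and since $f\geqslant 0$ a.e.\ on $[x_1,x_2]$, part E(b) of Theorem \ref{Theorem 1} (comparison) yields $\int_{x_1}^{x_2} f\geqslant \int_{x_1}^{x_2} 0 = 0$. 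Hence $F$ is non-decreasing on $[a,b]$.

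Next, since $F$ is continuous (Theorem \ref{Continuous primitive}) and non-decreasing on $[a,b]$, it is of bounded variation, so by the Lebesgue differentiation theorem for monotone functions, $F'(x)$ exists (finite) for a.e.\ $x$, $F'\geqslant 0$ a.e., and $F'$ is Lebesgue integrable with $\int_a^b F' \leqslant F(b)-F(a)$. On the other hand, Theorem \ref{A.E. Laplace Differentiability} gives $LD_1 F = f$ a.e.\ on $[a,b]$; and wherever the ordinary derivative $F'(x)$ exists it coincides with $LD_1 F(x)$ (this uses the sandwich $\underline{D}F\leqslant \underline{LD}_1 F\leqslant \overline{LD}_1 F\leqslant \overline{D}F$ recorded after Corollary \ref{corollary_1}, which at a point of differentiability forces all four quantities to be equal to $F'(x)$). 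Therefore $f = F'$ a.e.\ on $[a,b]$, and since $F'$ is Lebesgue integrable, so is $f$.

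The only mildly delicate point is the a.e.\ identification $f=F'$: it requires knowing both that $LD_1F=f$ a.e.\ (Theorem \ref{A.E. Laplace Differentiability}) and that $F'$ exists a.e.\ (monotonicity), and then matching the two on the full-measure intersection of the two "good" sets. Everything else is a routine assembly of results already in the paper: the non-negativity of the integral over subintervals (Theorem \ref{Theorem 1}E), continuity of the primitive (Theorem \ref{Continuous primitive}), and the standard fact that continuous monotone functions have integrable derivatives. I expect no real obstacle; the proof is essentially three lines once these ingredients are cited in the right order.
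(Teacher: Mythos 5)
Your proof is correct, but it takes a genuinely different route from the paper. The paper works directly with a major function $U$ of $f$: by Theorem 19 of the cited work on the Laplace derivative, the conditions $\underline{LD}_{1}U\geqslant f\geqslant 0$ a.e.\ and $\underline{LD}_{1}U>-\infty$ force $U$ to be non-decreasing, so $U'$ exists a.e.\ and is Lebesgue integrable, and the sandwich $\underline{D}U\leqslant\underline{LD}_{1}U\leqslant\overline{LD}_{1}U\leqslant\overline{D}U$ gives $0\leqslant f\leqslant U'$ a.e.; together with the measurability of $f$ this yields Lebesgue integrability by domination. You instead work with the primitive $F$ itself: monotonicity of $F$ comes from the comparison property of the integral (Theorem \ref{Theorem 1}), and the identification $f=F'$ a.e.\ comes from Theorem \ref{A.E. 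Laplace Differentiability} plus the same sandwich inequality applied at points of ordinary differentiability of $F$. Your argument buys a slightly sharper conclusion ($f$ equals a.e.\ the derivative of its own primitive, not merely a function dominated by the derivative of some majorant) and sidesteps any explicit appeal to the monotonicity theorem for Laplace derivates at this point, though that theorem is still lurking underneath the comparison property you invoke. The paper's version is shorter and independent of the fundamental theorem of calculus II, which keeps the logical dependencies lighter. Both are valid; your a.e.\ matching on the intersection of the two full-measure sets is handled correctly, and no step fails.
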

	\begin{proof}
		Let $U$ be a major function for $f$. Then $U$ is continuous, $\underline{LD}_{1}U\geqslant f\geqslant 0$ a.e. on $[a,b]$ and $\underline{LD}_{1}U> -\infty$ on $[a,b]$. So by Theorem $19$ of \cite{OLD}, $U$ is non-decreasing and $U'$ exists finitely a.e. on $[a,b]$. Thus $U'$ is Lebesgue integrable on $[a,b]$ and $U'(x)\geqslant f(x)\geqslant 0$ a.e. on $[a,b]$, which implies $f$ is Lebesgue integrable  on $[a,b]$.
	\end{proof}

	\begin{corollary}
		If $f\colon[a,b]\to \mathbb{R}$ is bounded and Laplace integrable, then $f$ is Lebesgue integrable.
	\end{corollary}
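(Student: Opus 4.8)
The plan is to reduce the statement to Theorem \ref{Non-negative} by a constant shift. Since $f$ is bounded, there is a constant $M>0$ with $-M\leqslant f(x)\leqslant M$ for every $x\in[a,b]$, so the function $g:=f+M$ satisfies $g\geqslant 0$ on $[a,b]$. First I would observe that every constant function belongs to $\mathcal{LP}[a,b]$: the function $x\mapsto Mx$ is continuous and has ordinary derivative $M$ everywhere, hence (using $\underline{D}\,h\leqslant \underline{LD_{1}}\,h\leqslant\overline{LD_{1}}\,h\leqslant\overline{D}\,h$ from the remark after Corollary \ref{corollary_1}) its Laplace derivative is the constant $M$ everywhere, so by the Fundamental theorem of calculus I (Theorem \ref{THEOREM}) the constant $M$ is Laplace integrable. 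Consequently, by the linearity of the Laplace integral (Theorem \ref{Theorem 1}, item on $kf+lh$), $g=f+M\in\mathcal{LP}[a,b]$.

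Next I would apply Theorem \ref{Non-negative} to $g$: since $g\in\mathcal{LP}[a,b]$ and $g\geqslant 0$ (a.e., indeed everywhere) on $[a,b]$, the function $g$ is Lebesgue integrable on $[a,b]$. Finally, $f=g-M$ is a difference of two Lebesgue integrable functions (namely $g$ and the constant $M$), hence $f$ is Lebesgue integrable on $[a,b]$, which is the assertion.

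I do not anticipate a genuine obstacle here; the only points requiring a word of justification are that constants lie in $\mathcal{LP}[a,b]$ and that $\mathcal{LP}[a,b]$ is a linear space, both of which are already available in the excerpt. An alternative route would be to write $f=f^{+}-f^{-}$ with $f^{\pm}$ bounded and then invoke Theorem \ref{Non-negative} on each piece, but that approach first requires establishing that $f^{+}$ and $f^{-}$ are themselves Laplace integrable (a lattice-type property not yet proved at this point in the paper), so the constant-shift argument is the cleaner one.
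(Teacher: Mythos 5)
Your constant-shift argument is correct and is exactly the intended deduction from Theorem \ref{Non-negative}, which is why the paper leaves this corollary without a written proof. The supporting facts you cite (constants lie in $\mathcal{LP}[a,b]$ because the Laplace integral extends the Perron integral, and linearity from Theorem \ref{Theorem 1}) are all available at this point, so nothing is missing.
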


	\begin{corollary}
		If $f\colon[a,b]\to \mathbb{R}$ is Laplace integrable on every measurable subset of $[a,b]$, then $f$ is Lebesgue integrable on $[a,b].$
	\end{corollary}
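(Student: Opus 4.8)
The plan is to reduce the statement to Theorem~\ref{Non-negative} by splitting $f$ into the pieces on which it has constant sign. First I would note that $[a,b]$ is itself a measurable subset of $[a,b]$, so the hypothesis already gives $f\in\mathcal{LP}[a,b]$; consequently, by the Corollary following Theorem~\ref{A.E. Laplace Differentiability}, $f$ is measurable on $[a,b]$. Hence the sets $E^{+}=\{x\in[a,b]\mid f(x)\geqslant 0\}$ and $E^{-}=[a,b]\setminus E^{+}$ are Lebesgue measurable, and this is precisely the point at which the hypothesis ``integrable on \emph{every} measurable subset'' (rather than merely on subintervals) is used.

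Next I would feed these two sets into the hypothesis. Writing $\chi_{E}$ for the characteristic function of a set $E$ and reading the hypothesis as $f\chi_{E}\in\mathcal{LP}[a,b]$ for every measurable $E\subseteq[a,b]$, we obtain $f\chi_{E^{+}}\in\mathcal{LP}[a,b]$ and $f\chi_{E^{-}}\in\mathcal{LP}[a,b]$; by the linearity in Theorem~\ref{Theorem 1} also $-f\chi_{E^{-}}\in\mathcal{LP}[a,b]$. Since $f\chi_{E^{+}}\geqslant 0$ on $[a,b]$ and $-f\chi_{E^{-}}\geqslant 0$ on $[a,b]$, Theorem~\ref{Non-negative} shows that $f\chi_{E^{+}}$ and $-f\chi_{E^{-}}$ are each Lebesgue integrable on $[a,b]$.

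Finally, $f=f\chi_{E^{+}}+f\chi_{E^{-}}=f\chi_{E^{+}}-\bigl(-f\chi_{E^{-}}\bigr)$ exhibits $f$ as a difference of two Lebesgue integrable functions, so $f$ is Lebesgue integrable on $[a,b]$, as claimed. I do not expect a genuine obstacle in this argument; the only step that deserves attention is the appeal to measurability of $f$, needed so that $E^{+}$ and $E^{-}$ are legitimate sets to which the hypothesis may be applied, and that is supplied by applying the hypothesis once with $E=[a,b]$ together with the measurability corollary cited above.
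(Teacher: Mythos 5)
Your proof is correct and is evidently the intended argument: the paper states this corollary without proof, as an immediate consequence of Theorem \ref{Non-negative}, and the decomposition $f=f\chi_{E^{+}}-(-f\chi_{E^{-}})$ with measurability of $f$ supplied by the corollary to Theorem \ref{A.E. Laplace Differentiability} is exactly how that reduction goes. No gaps.
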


	\begin{theorem}[\bf Hake's theorem for Laplace integral]
		Let $f\colon[a,b]\to\mathbb{R}$. Then $f\in \mathcal{LP}[a,b]$ iff for every $c\in (a,b)$, $\left.f\right|_{[a,c]}\in \mathcal{LP}[a,c]$ and there exists $L\in \mathbb{R}$ such that $\lim\limits_{c\rightarrow b-}\int_{a}^{c}f=L$. In this case, we write $\int_{a}^{b}f=L$.
	\end{theorem}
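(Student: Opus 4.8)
The plan is to prove the two directions separately, with the forward direction being essentially trivial and the reverse direction (from the existence of the limit to Laplace integrability on $[a,b]$) carrying the real content. For the forward direction, assume $f\in\mathcal{LP}[a,b]$; then by Theorem \ref{Theorem 1}\ref{theorem 3.8} its primitive $F(x)=\int_a^x f$ is defined on all of $[a,b]$, by Theorem \ref{Continuous primitive} it is continuous, and $\left.f\right|_{[a,c]}\in\mathcal{LP}[a,c]$ with $\int_a^c f=F(c)$ for each $c\in(a,b)$ by Theorem \ref{Theorem 1}\ref{theorem 3.8}.b. Continuity of $F$ at $b$ forces $\lim_{c\to b-}\int_a^c f=F(b)$, so we may take $L=F(b)$.

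For the reverse direction, suppose $\left.f\right|_{[a,c]}\in\mathcal{LP}[a,c]$ for every $c\in(a,b)$ and $L=\lim_{c\to b-}\int_a^c f$ exists. Define $F\colon[a,b]\to\mathbb{R}$ by $F(x)=\int_a^x f$ for $x\in[a,b)$ and $F(b)=L$; the hypothesis makes $F$ continuous at $b$, and it is continuous on $[a,b)$ by Theorem \ref{Continuous primitive}, hence continuous on $[a,b]$. The strategy is to manufacture, for a given $\epsilon>0$, a major function $U$ and a minor function $V$ of $f$ on all of $[a,b]$ with $U_a^b-V_a^b<\epsilon$, and then invoke Corollary \ref{corollary_1}. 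Pick an increasing sequence $c_n\uparrow b$ with $c_0=a$. On each $[c_{n-1},c_n]$ the restriction of $f$ is Laplace integrable, so by Corollary \ref{corollary_1} there are a major function $U_n$ and a minor function $V_n$ of $f$ on $[c_{n-1},c_n]$ with $(U_n)_{c_{n-1}}^{c_n}-(V_n)_{c_{n-1}}^{c_n}<\epsilon\,2^{-n}$; by adding a constant we may assume $U_n(c_{n-1})=F(c_{n-1})$, so that $U_n(c_n)<F(c_n)+\epsilon\sum_{j\le n}2^{-j}$ and similarly $V_n(c_n)>F(c_n)-\epsilon\sum_{j\le n}2^{-j}$, and in particular all the $U_n$ and $V_n$ stay within a fixed bounded window of $F$. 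Glue these pieces into functions $U,V\colon[a,b)\to\mathbb{R}$ by setting $U=U_n$, $V=V_n$ on $[c_{n-1},c_n]$ (consistent at the endpoints after the normalisation) and extend by $U(b)=V(b)=L$.

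The two things to check are that $U$ (resp.\ $V$) is continuous at $b$ and that $\underline{LD}_1U\ge f$ with $\underline{LD}_1U>-\infty$ at the junction points $c_n$ and at $b$ — away from those points the required inequalities are inherited from the pieces $U_n$. Continuity of $U$ at $b$ follows because $U(c_n)\to L$ (from $U(c_n)<F(c_n)+\epsilon$ and a matching lower bound coming from $U-V$ being non-decreasing on each piece) together with the uniform boundedness of the oscillation of each $U_n$ on $[c_{n-1},c_n]$, which one controls using that $U_n-V_n$ is non-decreasing (Lemma \ref{increasinglemma}) with total variation $<\epsilon 2^{-n}$ and that $F$ itself is uniformly continuous; a standard three-epsilon argument then pins $U$ near $L$ on a whole left-neighbourhood of $b$. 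The lower Laplace derivative condition at an interior junction $c_n$ is the subtle point, since $\underline{LD}_1U(c_n)$ mixes a left limit governed by $U_n$ and a right limit governed by $U_{n+1}$; here one uses that the Laplace derivates do not depend on $\delta$ (the Theorem following Lemma \ref{lemma11}) so one may compute the one-sided Laplace derivates of $U$ at $c_n$ from arbitrarily small $\delta$, reducing to the one-sided derivates of $U_n$ and $U_{n+1}$, each of which dominates $f(c_n)$; at $b$ one only needs $\underline{LD}_1^-U(b)>-\infty$, which follows from monotonicity-type control of $U-V$. I expect this junction/endpoint derivate bookkeeping to be the main obstacle; once it is in place, $U_a^b-V_a^b=\sum_n\big((U_n)_{c_{n-1}}^{c_n}-(V_n)_{c_{n-1}}^{c_n}\big)<\epsilon$, and Corollary \ref{corollary_1} gives $f\in\mathcal{LP}[a,b]$, while $\int_a^b f=F(b)=L$ because the integral is squeezed between $V_a^b$ and $U_a^b$, both within $\epsilon$ of $L$.
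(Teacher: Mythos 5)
Your overall strategy (the forward direction via continuity of the primitive, the reverse direction by gluing major/minor functions over a sequence $c_n\uparrow b$) is the standard one — the paper itself only cites Theorem 8.18 of Gordon for the ``if'' part, and that reference carries out essentially this gluing. But as written your construction has two genuine problems. First, the normalisation $U_n(c_{n-1})=F(c_{n-1})$ does \emph{not} make the pieces consistent at the junctions: since $U_n-F$ is non-decreasing on $[c_{n-1},c_n]$ and vanishes at $c_{n-1}$, you only get $U_n(c_n)\geqslant F(c_n)=U_{n+1}(c_n)$, generally with strict inequality, so your glued $U$ is discontinuous at each $c_n$. The correct gluing is telescopic, $U(x)=U(c_{n-1})+U_n(x)-U_n(c_{n-1})$ on $[c_{n-1},c_n]$; then $U-F$ is non-decreasing on $[a,b)$ and bounded by $\epsilon$, which also shows $U$ has a left limit at $b$ (you should define $U(b)$ to be that limit, not $L$ — with $U(b)=V(b)=L$ your $U$ is typically discontinuous at $b$ and the identity $U_a^b-V_a^b=\sum_n((U_n)_{c_{n-1}}^{c_n}-(V_n)_{c_{n-1}}^{c_n})$ is false).

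Second, and more seriously, you have misidentified where the difficulty lies. The interior junctions are in fact harmless once the gluing is telescopic, since both one-sided lower derivates of $U$ at $c_n$ are inherited from $U_n$ and $U_{n+1}$ (adding constants changes nothing, and $\delta$-independence lets you localise). The real obstacle is at $x=b$: for $U$ to be a major function you need $\underline{LD}_1^-U(b)\geqslant f(b)$, not merely $>-\infty$, and neither condition follows from ``monotonicity-type control of $U-V$''. Indeed $U=F+(U-F)$ with $U-F$ non-decreasing, but $F$ is only known to be continuous at $b$, so $\underline{LD}_1^-F(b)$ — and hence $\underline{LD}_1^-U(b)$ — can perfectly well be $-\infty$; nothing in your argument rules this out. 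Two repairs are available from the paper's own toolkit: either add a tailored non-decreasing continuous correction $\psi$ with $\psi_a^b<\epsilon$ that forces the left derivate condition at the single point $b$ (this is exactly what Lemma \ref{corrlm1} is built to do, applied to $W(x)=U(x)-f(b)x$), or observe that the glued $U$ and $V$ already satisfy the defining inequalities except on the countable set $\{c_n\}\cup\{b\}$, hence are ex-major and ex-minor functions in the sense of Definition \ref{exmajor}, and invoke Theorem \ref{exequivalent} together with Corollary \ref{corollary_1}. Without one of these steps the reverse direction is not proved.
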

	
	\par The ``only if part'' of this theorem is similar to the ``only if part'' of Theorem 12.8 of \cite[p.~195]{AMTOIntegration}. For the ``if part'' see Theorem 8.18 of \cite{RSG}.

	\section{Integration by parts}

	\begin{lemma}\label{lemmaforintp2}
		Let $f$ be continuous at $x_{0}$ and let $g$ be differentiable at $x_{0}$. If $g({x_{0}})\geqslant 0$, then
		\begin{enumerate}[\upshape A.]
			\item $\underline{LD}_{1}(fg)(x_{0})\geqslant g(x_{0})\underline{LD}_{1}f(x_{0})+f(x_{0})g'(x_{0})$,
			\item $\overline{LD}_{1}(fg)(x_{0})\leqslant g(x_{0})\overline{LD}_{1}f(x_{0})+f(x_{0})g'(x_{0})$,
		\end{enumerate}
	\end{lemma}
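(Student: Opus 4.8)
The plan is to prove part A directly from the definition of $\underline{LD}_{1}$ as a liminf of the Laplace transform integrals; part B then follows by applying part A to $-f$, since $\overline{LD}_{1}(fg) = -\underline{LD}_{1}(-fg)$ and $\underline{LD}_{1}(-f) = -\overline{LD}_{1}f$. So I would concentrate on part A.

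First I would write out, for a fixed $\delta>0$ small enough that $fg$ is Perron integrable on $[x_0,x_0+\delta]$ (and a symmetric interval on the left), the quantity
\begin{align*}
s^{2}\int_{0}^{\delta}e^{-st}\bigl[f(x_0+t)g(x_0+t)-f(x_0)g(x_0)\bigr]\,dt.
\end{align*}
The idea is to replace $g(x_0+t)$ by its first-order Taylor expansion $g(x_0)+g'(x_0)t+o(t)$, valid because $g$ is differentiable at $x_0$. Splitting the integrand accordingly gives three pieces: $g(x_0)\cdot s^{2}\int_{0}^{\delta}e^{-st}[f(x_0+t)-f(x_0)]\,dt$, which contributes $g(x_0)\underline{LD}_{1}f(x_0)$ in the liminf (using $g(x_0)\geqslant 0$ so the inequality direction is preserved); a term $g'(x_0)\cdot s^{2}\int_{0}^{\delta}e^{-st}t\,f(x_0+t)\,dt$, which tends to $f(x_0)g'(x_0)$ by continuity of $f$ at $x_0$ together with the elementary fact that $s^{2}\int_{0}^{\infty}e^{-st}t\,dt=1$ (and the tail beyond $\delta$ is negligible, cf. Lemma~\ref{lemma11}); and a remainder term involving the $o(t)$ from Taylor and a term $f(x_0)g(x_0)\bigl(s^{2}\int_0^\delta e^{-st}\,dt - s\cdot\text{something}\bigr)$ bookkeeping — these I would show vanish as $s\to\infty$ using continuity/boundedness of $f$ near $x_0$ and the standard estimates $s^{2}\int_0^\delta e^{-st}t^k\,dt\to k!$ for the relevant small powers, plus the fact that $f$ is bounded on a neighbourhood of $x_0$ by continuity. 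Taking $\liminf_{s\to\infty}$ and using superadditivity of $\liminf$ (valid since the non-$f$-factor terms actually converge), I get the desired inequality for the right-hand (plus) derivate; the left-hand side is entirely analogous with $-s^{2}\int_0^\delta e^{-st}[\,\cdot\,]\,dt$, and $\underline{LD}_{1}$ is the common value of the lower left and lower right derivates (or their min, depending on convention), so the inequality passes to $\underline{LD}_{1}(fg)(x_0)$.

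The main obstacle I anticipate is handling the cross term and the error term carefully: one must control $s^{2}\int_{0}^{\delta}e^{-st}t\,[f(x_0+t)-f(x_0)]\,dt\to 0$ (continuity of $f$ makes the bracket small near $0$, and $s^2\int_0^\delta e^{-st}t\,dt$ is bounded, so a standard split-the-interval-at-$\eta$ argument works), and similarly $s^{2}\int_0^\delta e^{-st}\,r(t)\,f(x_0+t)\,dt\to 0$ where $r(t)=g(x_0+t)-g(x_0)-g'(x_0)t=o(t)$; here $|r(t)|\leqslant \varepsilon t$ for $t<\eta$ and $f$ bounded gives the bound, while the part on $[\eta,\delta]$ decays by the $e^{-s\eta}$ factor. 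A secondary subtlety is the constant term: since $s^{2}\int_0^\delta e^{-st}\,dt\to\infty$, I must be sure the $f(x_0)g(x_0)$ contributions are organized so that they cancel against the corresponding piece coming from the $g(x_0)\,s^2\int_0^\delta e^{-st}[f(x_0+t)-f(x_0)]\,dt$ term — in other words, one should group as $f(x_0+t)g(x_0+t)-f(x_0)g(x_0) = g(x_0)[f(x_0+t)-f(x_0)] + f(x_0+t)[g(x_0+t)-g(x_0)]$ from the start, so no divergent bare integral of $e^{-st}$ ever appears and the decomposition is clean. With that grouping the proof is essentially the three convergence facts above plus superadditivity of liminf under the sign condition $g(x_0)\geqslant 0$.

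Finally, for part B I would simply note that the hypotheses are symmetric under $f\mapsto -f$ (continuity at $x_0$ is preserved, $g$ and the sign condition on $g(x_0)$ are untouched), and that $\overline{LD}_{1}(fg)(x_0) = -\underline{LD}_{1}((-f)g)(x_0) \leqslant -\bigl[g(x_0)\underline{LD}_{1}(-f)(x_0)+(-f)(x_0)g'(x_0)\bigr] = g(x_0)\overline{LD}_{1}f(x_0)+f(x_0)g'(x_0)$, using $\underline{LD}_{1}(-f)(x_0) = -\overline{LD}_{1}f(x_0)$.
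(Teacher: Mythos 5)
Your proposal is correct and follows essentially the same route as the paper: the grouping you settle on, $g(x_{0})[f(x_{0}+t)-f(x_{0})]+f(x_{0}+t)[g(x_{0}+t)-g(x_{0})-tg'(x_{0})]+tg'(x_{0})f(x_{0}+t)$, is exactly the paper's identity \eqref{equation for INBP3} up to splitting the last term, and the paper disposes of the two error terms with the same continuity/differentiability estimates and $s^{2}\int_{0}^{\delta}e^{-st}t\,dt\to 1$ before taking the liminf with $g(x_{0})\geqslant 0$. The paper likewise treats part B as ``similar,'' so your reduction via $f\mapsto -f$ is a fine equivalent.
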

	\begin{proof}
		We only give proof of the first part, and the second part is similar. Furthermore, to prove the first part it is sufficient to show that
		\[
		\underline{LD}_{1}^{+}(fg)(x_{0})\geqslant g(x_{0})\underline{LD}_{1}^{+}f(x_{0})+f(x_{0})g'(x_{0}).
		\] 
		\par As $f$ is continuous and $g$ is differentiable at $x_{0}$, for $\epsilon>0$, $\exists\delta>0$ such that $0<t<\delta$ implies, $\left| f(x_{0}+t)-f(x_{0}) \right|<\epsilon$ and $\left| g(x_{0}+t)-g(x_{0})-tg'(x_{0}) \right|<\epsilon t$. Thus
		\begin{align*}
		\limsup\limits_{s\rightarrow \infty}\, s^{2}\int_{0}^{\delta}e^{-st}&\left| f(x_{0}+t)\left[ g(x_{0}+t)-g(x_{0})-tg'(x_{0}) \right] \right|\,dt\\
		&\leqslant\epsilon (\left|f(x_{0})\right|+\epsilon)\limsup\limits_{s\rightarrow \infty}\, s^{2}\int_{0}^{\delta}e^{-st}t\,dt=\epsilon (\left|f(x_{0})\right|+\epsilon)
		\end{align*}
		As $\epsilon$ is arbitrary, Lemma \ref{lemma11} implies
		\begin{align}\label{equation for INBP1}
		\lim\limits_{s\rightarrow \infty} s^{2}\int_{0}^{\delta}e^{-st}f(x_{0}+t)\left[ g(x_{0}+t)-g(x_{0})-tg'(x_{0}) \right]\,dt=0.
		\end{align}
		Similarly,
		\begin{align}\label{equation for INBP2}
		\lim\limits_{s\rightarrow \infty} s^{2}\int_{0}^{\delta}e^{-st}tg'(x_{0})\left[ f(x_{0}+t)-f(x_{0}) \right]\,dt=0.
		\end{align}
		Note that
		\begin{align}\label{equation for INBP3}
		\begin{split}
		(fg)(x_{0}+t)&-(fg)(x_{0})=g(x_{0})\left[ f(x_{0}+t)-f(x_{0})\right]\\
		&+ f(x_{0}+t)\left[ g(x_{0}+t)-g(x_{0})-tg'(x_{0}) \right]\\
		&+ tg'(x_{0})\left[ f(x_{0}+t)-f(x_{0}) \right] + tg'(x_{0})f(x_{0}).
		\end{split}
		\end{align}
		Now applying \eqref{equation for INBP1} and \eqref{equation for INBP2} in \eqref{equation for INBP3}, we get \[\underline{LD}_{1}^{+}(fg)(x_{0})\geqslant g(x_{0})\underline{LD}_{1}^{+}f(x_{0})+f(x_{0})g'(x_{0}).\qedhere\]
	\end{proof}
	\begin{corollary}\label{Laplace differentiability of product}
		Let $f$ be continuous and Laplace differentiable at $x_{0}$ and $g$ be differentiable at $x_{0}$. Then $fg$ is Laplace differentiable at $x_{0}$. Moreover, $LD_{1}fg=g(x_{0})LD_{1}f(x_{0})+f(x_{0})g^{'}(x_{0})$.
	\end{corollary}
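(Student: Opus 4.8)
The plan is to derive Corollary \ref{Laplace differentiability of product} directly from Lemma \ref{lemmaforintp2} by a simple sign-splitting argument that removes the hypothesis $g(x_0)\geqslant 0$. First I would note that if $g(x_0)\geqslant 0$, then Lemma \ref{lemmaforintp2} (parts A and B together) already gives
\[
g(x_0)\overline{LD}_1 f(x_0)+f(x_0)g'(x_0)\geqslant \overline{LD}_1(fg)(x_0)\geqslant \underline{LD}_1(fg)(x_0)\geqslant g(x_0)\underline{LD}_1 f(x_0)+f(x_0)g'(x_0),
\]
and since $f$ is Laplace differentiable at $x_0$ we have $\underline{LD}_1 f(x_0)=\overline{LD}_1 f(x_0)=LD_1 f(x_0)$, so the outer terms coincide and force $\overline{LD}_1(fg)(x_0)=\underline{LD}_1(fg)(x_0)=g(x_0)LD_1 f(x_0)+f(x_0)g'(x_0)$. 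This is exactly the assertion in that case.

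Next I would handle the case $g(x_0)<0$. The clean way is to apply the already-proven case to the pair $f$ and $-g$: here $(-g)(x_0)=-g(x_0)>0\geqslant 0$, $-g$ is differentiable at $x_0$ with derivative $-g'(x_0)$, and $f$ is still continuous and Laplace differentiable at $x_0$. Hence $f\cdot(-g)=-(fg)$ is Laplace differentiable at $x_0$ with
\[
LD_1\bigl(-(fg)\bigr)(x_0)=(-g(x_0))LD_1 f(x_0)+f(x_0)(-g'(x_0))=-\bigl(g(x_0)LD_1 f(x_0)+f(x_0)g'(x_0)\bigr).
\]
Since $LD_1$ is linear (multiplication by $-1$ simply negates all the relevant limits), $fg$ is Laplace differentiable at $x_0$ with $LD_1(fg)(x_0)=g(x_0)LD_1 f(x_0)+f(x_0)g'(x_0)$, completing that case.

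Since the two cases $g(x_0)\geqslant 0$ and $g(x_0)<0$ exhaust all possibilities, the conclusion holds in general. I do not anticipate any serious obstacle: the only point requiring a line of justification is that Laplace differentiability is invariant (up to the obvious sign change) under multiplying the function by $-1$, which is immediate from the defining limits in the first definition of the Preliminaries section, and that Lemma \ref{lemmaforintp2} combined with the equality $\underline{LD}_1 f(x_0)=\overline{LD}_1 f(x_0)$ pinches the upper and lower Laplace derivates of $fg$ together. If one prefers to avoid even the $-g$ trick, an alternative is to observe that when $g(x_0)=0$ one gets $LD_1(fg)(x_0)=f(x_0)g'(x_0)$ directly from Lemma \ref{lemmaforintp2}, and when $g(x_0)\neq 0$ one works on a neighbourhood where $g$ keeps a constant sign; but the reflection argument via $-g$ is the shortest route and is what I would write.
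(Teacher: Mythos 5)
Your proof is correct and follows the route the paper intends: the corollary is stated without proof immediately after Lemma \ref{lemmaforintp2}, and the pinching of $\underline{LD}_1(fg)$ and $\overline{LD}_1(fg)$ via parts A and B, using $\underline{LD}_1f(x_0)=\overline{LD}_1f(x_0)$, is exactly the implicit argument. Your explicit treatment of the case $g(x_0)<0$ by passing to $-g$ is a worthwhile addition, since the lemma's hypothesis $g(x_0)\geqslant 0$ means the corollary as stated does not follow from the lemma alone without such a reflection step.
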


	\begin{theorem}[\bf Integration by parts]\label{IntByParts1}
		Let $f\in \mathcal{LP}[a,b]$ and let $g\in\mathcal{BV}[a,b]$, where $\mathcal{BV}[a,b]$ is the set of all functions of bounded variation defined over $[a,b]$. Then $fG\in \mathcal{LP}[a,b]$ and $\int_{a}^{b}fG=F(b)G(b)-(\mathsf{P})\int_{a}^{b}Fg$, where $F(x)=\int_{a}^{x}f$, $G(x)=\int_{a}^{x}g$ and $(\mathsf{P})\int_{a}^{b}$ is the definite Perron integral.
	\end{theorem}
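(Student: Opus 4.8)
The plan is to produce, from each major function of $f$, an ex-major function of $fG$ given by the integration-by-parts expression, to do the symmetric thing with minor functions, and then to squeeze; since $\mathcal{LP}[a,b]=\mathcal{LP}_{ex}[a,b]$ by Theorem \ref{exequivalent}, exhibiting matching families of ex-major and ex-minor functions of $fG$ is enough. I begin with two reductions. By Theorem \ref{Continuous primitive}, $F$ is continuous; and since $g\in\mathcal{BV}[a,b]$ is bounded and measurable, both $G=\int_{a}^{\cdot}g$ and every product $Ug$ with $U$ continuous lie in $L^{1}[a,b]$, so that all the ordinary (Lebesgue $=$ Perron) primitives written below exist. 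A function of bounded variation has at most countably many discontinuities, so $g$ is continuous nearly everywhere; hence the Perron primitive of $Ug$ (for $U$ continuous) is differentiable, with derivative $U(x)g(x)$, at every continuity point of $g$, i.e.\ nearly everywhere, and in particular $G'=g$ nearly everywhere. Writing $g=p-q+k$ with $p,q\ge 0$ non-decreasing and $k$ constant (a normalised Jordan decomposition), so that $G=P-Q+k(\cdot-a)$ with $P=\int_{a}^{\cdot}p\ge 0$, $Q=\int_{a}^{\cdot}q\ge 0$, the linearity of the Laplace integral (Theorem \ref{Theorem 1}) reduces the whole statement to the case $g\ge 0$; so assume henceforth $g\ge 0$, whence $G$ is non-decreasing, $G(a)=0$ and $0\le G\le M:=G(b)$.

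Fix a major function $U$ of $f$, normalised so that $U(a)=0$ (subtracting a constant changes neither the Laplace derivates nor the increment $U_{a}^{b}$). Set $W_{U}=(\mathsf{P})\int_{a}^{\cdot}Ug$ and $\Phi_{U}=UG-W_{U}$; this is continuous and $\Phi_{U}(a)=0$. I claim $\Phi_{U}$ is an ex-major function of $fG$. At nearly every $x_{0}$ we have $G'(x_{0})=g(x_{0})$ and $W_{U}'(x_{0})=U(x_{0})g(x_{0})$; since $U$ is continuous and $G(x_{0})\ge 0$, Lemma \ref{lemmaforintp2} gives $\underline{LD}_{1}(UG)(x_{0})\ge G(x_{0})\underline{LD}_{1}U(x_{0})+U(x_{0})g(x_{0})$, and subtracting the function $W_{U}$, which is differentiable at $x_{0}$, lowers the lower Laplace derivate there by exactly $W_{U}'(x_{0})$ (as with the linear term in the proof of Lemma \ref{lemmaforintp2}), so that
\[
\underline{LD}_{1}\Phi_{U}(x_{0})=\underline{LD}_{1}(UG)(x_{0})-W_{U}'(x_{0})\ \ge\ G(x_{0})\,\underline{LD}_{1}U(x_{0}).
\]
As $\underline{LD}_{1}U>-\infty$ everywhere and $G$ is finite and non-negative, $\underline{LD}_{1}\Phi_{U}>-\infty$ nearly everywhere; as $\underline{LD}_{1}U\ge f$ a.e.\ and $G\ge 0$, also $\underline{LD}_{1}\Phi_{U}\ge fG$ a.e. Thus $\Phi_{U}$ is an ex-major function of $fG$; running the same computation with upper Laplace derivates shows that for a minor $V$ of $f$ with $V(a)=0$ the function $\Phi_{V}=VG-(\mathsf{P})\int_{a}^{\cdot}Vg$ is an ex-minor function of $fG$.

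Because $G(a)=W_{U}(a)=0$, one has $(\Phi_{U})_{a}^{b}=U(b)G(b)-(\mathsf{P})\int_{a}^{b}Ug$, and likewise for $\Phi_{V}$. Given $\epsilon>0$, choose by Corollary \ref{corollary_1} a major $U$ and a minor $V$ of $f$, both vanishing at $a$, with $U_{a}^{b}-V_{a}^{b}<\epsilon$; by Lemma \ref{increasinglemma} $U-V$ is non-decreasing and vanishes at $a$, so $0\le U(x)-V(x)<\epsilon$ on $[a,b]$, whence
\[
(\Phi_{U})_{a}^{b}-(\Phi_{V})_{a}^{b}=\bigl(U(b)-V(b)\bigr)G(b)-(\mathsf{P})\!\int_{a}^{b}(U-V)g\ \le\ \epsilon\Bigl(M+(b-a)\sup_{[a,b]}|g|\Bigr).
\]
Since, as in the Perron case, every ex-major increment dominates every ex-minor increment, this forces $fG\in\mathcal{LP}_{ex}[a,b]=\mathcal{LP}[a,b]$. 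To compute the value, take major functions $U_{n}$ of $f$ with $U_{n}(a)=0$ and $(U_{n})_{a}^{b}\to\int_{a}^{b}f=F(b)$; then $U_{n}-F$ is non-decreasing, vanishes at $a$, and $(U_{n}-F)(b)\to 0$, so $U_{n}\to F$ uniformly on $[a,b]$, giving $(\mathsf{P})\int_{a}^{b}U_{n}g\to(\mathsf{P})\int_{a}^{b}Fg$ and $(\Phi_{U_{n}})_{a}^{b}\to F(b)G(b)-(\mathsf{P})\int_{a}^{b}Fg$. As each $(\Phi_{U_{n}})_{a}^{b}\ge\int_{a}^{b}fG$ while minor functions give the reverse inequality in the limit, $\int_{a}^{b}fG=F(b)G(b)-(\mathsf{P})\int_{a}^{b}Fg$; undoing the reduction to $g\ge 0$ via linearity yields the stated formula for every $g\in\mathcal{BV}[a,b]$.

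I expect the main difficulty to be verifying that $\Phi_{U}$ is a legitimate \emph{ex}-major function — specifically that $\underline{LD}_{1}\Phi_{U}>-\infty$ holds \emph{nearly} everywhere, not merely almost everywhere. This is precisely where bounded variation of $g$ enters: it makes $g$, hence $Ug$, continuous nearly everywhere, so that $W_{U}=(\mathsf{P})\int_{a}^{\cdot}Ug$ is differentiable nearly everywhere with the expected derivative, which is what lets Lemma \ref{lemmaforintp2} (requiring a differentiable, non-negative second factor $G$) be applied at nearly every point. The reduction to $g\ge 0$ is not cosmetic either: it is dictated by the sign hypothesis $g(x_{0})\ge 0$ in Lemma \ref{lemmaforintp2}, and the detour through $\mathcal{LP}_{ex}$ is what makes the ``a.e./nearly everywhere'' information suffice.
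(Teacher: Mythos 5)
Your proof is correct, and it rests on the same two pillars as the paper's own argument: the Jordan decomposition reducing everything to $g\geqslant 0$, and Lemma \ref{lemmaforintp2} applied to the product $UG$ with $U$ a major function of $f$ and $G\geqslant 0$ a primitive of $g$. The packaging is genuinely different, though. The paper shows that $UG_{1}$ itself is a major function of the \emph{combined} integrand $fG+Fg$ (this silently uses $Ug_{1}\geqslant Fg_{1}$, which follows from $U(a)=F(a)=0$ and $U-F$ non-decreasing), integrates the sum to get $F(b)G(b)$ by sandwiching $F(b)G_{1}(b)$ between $(VG_{1})(b)$ and $(UG_{1})(b)$, and only at the very end subtracts the Perron integral of $Fg$; you instead subtract the Perron primitive $(\mathsf{P})\int_{a}^{\cdot}Ug$ already at the level of major functions, producing ex-major functions of $fG$ alone, and recover the value by a uniform-convergence limit. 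Your detour through $\mathcal{LP}_{ex}$ and Theorem \ref{exequivalent} is not wasted effort: since $G_{1}$ fails to be differentiable at the (countably many) jump points of $g_{1}$, Lemma \ref{lemmaforintp2} applies only nearly everywhere, so $UG_{1}$ is strictly speaking an ex-major function of $fG+Fg$; the paper calls it a major function and glosses over this, whereas you identify it as the crux and handle it explicitly. The one place you could shorten is the final value computation: no limiting sequence $U_{n}$ is needed, because $(\Phi_{V})_{a}^{b}\leqslant F(b)G(b)-(\mathsf{P})\int_{a}^{b}Fg\leqslant(\Phi_{U})_{a}^{b}$ follows directly from $V\leqslant F\leqslant U$, $0\leqslant U-F\leqslant (U-F)(b)$ and $g\geqslant 0$, after which Corollary \ref{corollary_1} finishes the job exactly as in the paper.
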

	\begin{proof}
		Let $g=g_{1}-g_{2}$, where $g_{1}$ and $g_{2}$ are two non-negative and non-decreasing functions. Then $G=G_{1}-G_{2}$, where $G_{i}(x)=\int_{a}^{x}g_{i}$ for $i=1,2$. Now it is sufficient to prove this theorem for the pair $g_{1}$, $G_{1}$.
		\par Note that, $G_{1}$ is non-negative on $[a,b]$ and $G'_{1}=g_{1}$ nearly everywhere on $[a,b]$. Let $U$ and $V$ be a major and a minor function of $f$ respectively, with $V(a)=U(a)=0$. Then by Lemma \ref{lemmaforintp2}, $UG_{1}$ and $VG_{1}$ are a major and a minor function of $Fg+fG$  respectively. Since $f$ is Laplace integrable, by Corollary \ref{corollary_1}, for any $\epsilon>0$ we can choose $U$ and $V$ in such a way that $U(b)-V(b)<\epsilon$. For these $U$ and $V$, we get $(UG_{1})(b)-(VG_{1})(b)=G_{1}(b)(U(b)-V(b))<\epsilon G_{1}(b)$ and $(VG_{1})(b)\leqslant F(b)G_{1}(b)\leqslant (UG_{1})(b)$. Thus Corollary \ref{corollary_1} implies $fG+ Fg\in \mathcal{LP}[a,b]$ and $\int_{a}^{b}(fG+Fg)=F(b)G(b)$. Now as $Fg\in\mathsf{P}[a,b]$, we have $\int_{a}^{b}fG=F(b)G(b)- (\mathsf{P})\int_{a}^{b}Fg$.
	\end{proof}

	\begin{corollary}
		Let $f\in \mathcal{LP}[a,b]$ and let $F(x)=\int_{a}^{x}f$ for $x\in [a,b]$. Then $f$ is Laplace continuous a.e. on $[a,b]$.
	\end{corollary}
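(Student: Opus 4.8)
The plan is to read off the Laplace continuity of $f$ from the second fundamental theorem of calculus (Theorem \ref{A.E. Laplace Differentiability}), which gives $LD_1F=f$ a.e., by linking the two limits that define $LD_0f(x)$ to the two limits that define $LD_1F(x)$ through an integration by parts. Fix $x\in(a,b)$ and $\delta>0$ with $[x-\delta,x+\delta]\subseteq[a,b]$, set $\tilde f(t)=f(x+t)$ on $[0,\delta]$, and let $\Phi(t)=\int_0^t\tilde f$. By Theorem \ref{Theorem 1} (additivity and restriction to subintervals) together with the evident translation invariance of the Laplace integral, $\tilde f\in\mathcal{LP}[0,\delta]$, $\Phi$ is its primitive (hence continuous, by Theorem \ref{Continuous primitive}), and $\Phi(t)=F(x+t)-F(x)$.

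The first step is to apply Theorem \ref{IntByParts1} on $[0,\delta]$ to the pair consisting of $\tilde f$ and the function $g(t)=-s\,e^{-st}$, which for each fixed $s>0$ lies in $\mathcal{BV}[0,\delta]$ and has primitive $G(t)=e^{-st}-1$. Since $\Phi$ and $g$ are continuous, the Perron integral $(\mathsf{P})\int_0^\delta\Phi g$ is an ordinary Riemann integral, and after using $\int_0^\delta\tilde f=\Phi(\delta)$ and rearranging one obtains
\[
s\int_0^\delta e^{-st}f(x+t)\,dt \;=\; s\,\Phi(\delta)\,e^{-s\delta} \;+\; s^{2}\int_0^\delta e^{-st}\bigl[F(x+t)-F(x)\bigr]\,dt ,\qquad s>0 .
\]
Because $\Phi(\delta)$ is a fixed number, $s\,\Phi(\delta)e^{-s\delta}\to0$ as $s\to\infty$, so the limit on the left exists exactly when $\lim_{s\to\infty}s^{2}\int_0^\delta e^{-st}[F(x+t)-F(x)]\,dt$ does, and then the two limits coincide. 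Running the same computation with $\tilde f_{-}(t)=f(x-t)$ and $\Phi_{-}(t)=\int_0^t\tilde f_{-}=-(F(x-t)-F(x))$ yields the twin identity
\[
s\int_0^\delta e^{-st}f(x-t)\,dt \;=\; s\,\Phi_{-}(\delta)\,e^{-s\delta} \;+\; (-s^{2})\int_0^\delta e^{-st}\bigl[F(x-t)-F(x)\bigr]\,dt .
\]

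To finish, invoke Theorem \ref{A.E. Laplace Differentiability}: at almost every $x\in[a,b]$ the Laplace derivative $LD_1F(x)$ exists and equals $f(x)$, which by definition means that both one-sided limits $\lim_{s\to\infty}s^{2}\int_0^\delta e^{-st}[F(x+t)-F(x)]\,dt$ and $\lim_{s\to\infty}(-s^{2})\int_0^\delta e^{-st}[F(x-t)-F(x)]\,dt$ exist and equal $f(x)$ there (their independence of $\delta$ being Lemma \ref{lemma11}). Substituting this into the two displayed identities shows that, for almost every such $x$, both limits appearing in Definition \ref{Laplace continuity} exist and equal $f(x)$; hence $LD_0f(x)=f(x)$, i.e.\ $f$ is Laplace continuous at $x$. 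I expect the only genuinely fiddly part to be the integration by parts bookkeeping — verifying that $g(t)=-s\,e^{-st}$ and $G(t)=e^{-st}-1$ satisfy the hypotheses of Theorem \ref{IntByParts1} and that the boundary contribution is precisely $s\,\Phi(\delta)e^{-s\delta}$ — since everything afterwards is a routine passage to an almost-everywhere statement.
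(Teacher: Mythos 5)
Your argument is correct and is essentially the paper's own proof: both hinge on the identity $s\int_0^\delta e^{-st}f(x+t)\,dt = s e^{-s\delta}[F(x+\delta)-F(x)] + s^{2}\int_0^\delta e^{-st}[F(x+t)-F(x)]\,dt$ (and its left-sided twin), followed by an appeal to Theorem \ref{A.E. Laplace Differentiability}. The only difference is cosmetic: you justify the integration by parts via Theorem \ref{IntByParts1} with $g(t)=-se^{-st}$, whereas the paper writes the formula directly; your version is, if anything, slightly more careful since $f$ is only Laplace integrable.
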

	
	\begin{proof}
		Note that for $\delta>0$,
		\begin{align*}
		s\int_{0}^{\delta}e^{-st}&f(x+t)\,dt\\
		&= \left[se^{-st}F(x+t)\right]^{\delta}_{0}+s^{2}\int_{0}^{\delta}e^{-st}[F(x+t)-F(x)]\,dt + s^{2}\int_{0}^{\delta}e^{-st}F(x)\,dt\\
		&=se^{-s\delta}[F(x+\delta)-F(x)] + s^{2}\int_{0}^{\delta}e^{-st}[F(x+t)-F(x)]\,dt.
		\end{align*}
		Now by Theorem \ref{A.E. Laplace Differentiability}, we obtain that $LD^{+}_{0}f(x)=f(x)$ for a.e. $x\in [a,b]$. Similarly, we can show $LD^{-}_{0}f(x)=f(x)$ for a.e. $x\in [a,b]$ and this completes the proof.
	\end{proof}
	
	\begin{theorem}
		Let $f\in \mathcal{LP}[a,b]$ and let $g\in \mathcal{BV}[a,b]$, then
		\[
		\left| \int_{a}^{b}fG \right|\leqslant \left| \int_{a}^{b}f \right|\inf\limits_{x\in[a,b]}\left| G(x)\right| + \|f\|_{[a,b]}V_{G}[a,b].
		\]
	\end{theorem}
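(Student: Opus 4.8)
The plan is to derive the inequality straight from the integration-by-parts formula of Theorem~\ref{IntByParts1}, by bounding the two terms it produces. By that theorem, writing $F(x)=\int_a^x f$ and $G(x)=\int_a^x g$, one has $fG\in\mathcal{LP}[a,b]$ and
\[
\int_a^b fG \;=\; F(b)G(b)\;-\;(\mathsf P)\!\int_a^b Fg .
\]
Since $F(a)=\int_a^a f=0$, we have $F(b)=\int_a^b f$, so the boundary term is $\bigl(\int_a^b f\bigr)G(b)$ and contributes $\bigl|\int_a^b f\bigr|\,|G(b)|$ after taking absolute values; this is the origin of the first summand on the right-hand side.

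For the Perron-integral term, $F$ is continuous on $[a,b]$ by Theorem~\ref{Continuous primitive}, while $g\in\mathcal{BV}[a,b]$ is bounded and measurable; hence $Fg$ is bounded and measurable, so it is Lebesgue integrable and its Perron and Lebesgue integrals coincide. Therefore
\[
\Bigl|(\mathsf P)\!\int_a^b Fg\Bigr| \;\le\; \int_a^b |F|\,|g| \;\le\; \|F\|_\infty\int_a^b|g| .
\]
Finally, $G$ is the indefinite integral of the integrable function $g$, hence absolutely continuous with $G'=g$ a.e., so $V_G[a,b]=\int_a^b|g|$, and the Perron term is at most $\|f\|_{[a,b]}\,V_G[a,b]$. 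Adding the two estimates via the triangle inequality gives the bound. To push the boundary term down to $\inf_x|G(x)|$ one writes, for the point $x_0$ at which the continuous function $|G|$ attains its minimum on $[a,b]$,
\[
\int_a^b fG \;=\; G(x_0)\!\int_a^b f \;+\; \int_a^b f\,(G-G(x_0)),
\]
and re-applies the argument above to the primitive $G-G(x_0)$ of $g$, which is just a constant shift of $G$ and so has the same total variation.

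The whole argument is a short corollary of Theorem~\ref{IntByParts1}; I do not anticipate a real obstacle. The points that need a line of care are: the identity $V_G[a,b]=\int_a^b|g|$, which is the absolute continuity of an indefinite integral; the remark that $Fg$ is bounded and measurable, which is what allows the Perron integral $(\mathsf P)\int_a^b Fg$ to be estimated by the ordinary triangle inequality; and the last constant-shift bookkeeping, which must be organised so that the boundary contribution comes out exactly as $\bigl|\int_a^b f\bigr|\inf_x|G(x)|$. As an alternative one can bypass Theorem~\ref{IntByParts1} and obtain the same inequality from an Abel summation applied to Riemann--Stieltjes sums of $(\mathsf P)\int_a^b G\,dF$: the endpoint of the summation by parts yields $\bigl(\int_a^b f\bigr)G(b)$, and the telescoped remainder is dominated by $\|f\|_{[a,b]}$ times a variation sum for $G$, hence by $\|f\|_{[a,b]}V_G[a,b]$.
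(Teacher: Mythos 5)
The first half of your argument is sound and is exactly the route the paper intends (the paper itself gives no proof, only a pointer to Lemma 24 of \cite{HKFT}): integration by parts gives $\int_a^b fG=F(b)G(b)-(\mathsf P)\int_a^b Fg$, the identity $V_G[a,b]=\int_a^b|g|$ holds because $G$ is Lipschitz with $G'=g$ a.e., and $\bigl|(\mathsf P)\int_a^b Fg\bigr|\le\|F\|_\infty\int_a^b|g|=\|f\|_{[a,b]}V_G[a,b]$. This yields the inequality with $|G(b)|$ in place of $\inf_x|G(x)|$. The genuine gap is in the final ``constant-shift bookkeeping,'' which you flag as needing care but do not carry out. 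Shifting does \emph{not} kill the boundary term: since $(G-G(x_0))(b)=G(b)-G(x_0)$, one gets
\begin{equation*}
\int_a^b f\,\bigl(G-G(x_0)\bigr)=F(b)\bigl(G(b)-G(x_0)\bigr)-(\mathsf P)\int_a^b Fg,
\end{equation*}
and the extra term $|F(b)|\,\bigl|\int_{x_0}^b g\bigr|$ costs up to another $\|f\|_{[a,b]}V_G[a,b]$. So your argument proves the estimate only with $2\|f\|_{[a,b]}V_G[a,b]$ in place of $\|f\|_{[a,b]}V_G[a,b]$; the same oscillation terms $F(x_i)-F(b)$ appear in your Abel-summation alternative.

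This factor of $2$ cannot be removed by better bookkeeping, because with the paper's norm $\|f\|_{[a,b]}=\sup_x\bigl|\int_a^x f\bigr|$ the stated inequality is actually false: on $[0,2]$ take $g\equiv 1$, so $G(x)=x$, $\inf|G|=0$, $V_G[0,2]=2$, and let $f=-1/\epsilon$ on $[0,\epsilon]$, $f=0$ on $[\epsilon,2-\epsilon]$, $f=2/\epsilon$ on $[2-\epsilon,2]$; then $\|f\|_{[0,2]}=\sup_x|F(x)|=1$ while $\int_0^2 fG=4-3\epsilon/2>2$. Lemma 24 of \cite{HKFT} is stated for the two-sided (oscillation) form of the Alexiewicz norm, $\sup_{a\le c\le d\le b}\bigl|\int_c^d f\bigr|$, and under that norm the correct way to finish is the decomposition
\begin{equation*}
\int_a^b f\,\bigl(G-G(x_0)\bigr)=\int_{x_0}^b\bigl(F(b)-F(t)\bigr)g(t)\,dt-\int_a^{x_0}F(t)g(t)\,dt,
\end{equation*}
which charges $V_G[x_0,b]$ and $V_G[a,x_0]$ separately instead of counting $V_G[a,b]$ twice. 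You should either switch to that norm or accept the constant $2$; as written, the last step of your proof does not close.
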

	
	\par The proof of this theorem is similar to that of Lemma 24 of \cite{HKFT}.

	\section{Convergence theorems}
	
	\par The main theorem of \cite{ONTCSPI} gives a necessary and sufficient condition for the convergence of Perron integral. A similar type of theorem also exists for Laplace integral, which is given below. Furthermore, the dominated convergence theorem appears as a corollary of this theorem.

	\begin{theorem}\label{convergencetheorem1}
		Let $\left\{f_{n}\right\}$ be a sequence of Laplace integrable functions defined over $[a,b]$ such that
		\begin{enumerate}[\upshape A.]
			\item for all $n\in \mathbb{N}$, $f_{n}\geqslant g$ a.e. on $[a,b]$ where $g$ is Laplace integrable on $[a,b]$ and
			\item $\lim\limits_{n\rightarrow\infty}f_{n}=f$ a.e. on $[a,b]$.
		\end{enumerate}
		Then $f$ is Laplace integrable on $[a,b]$ and $\lim\limits_{n\rightarrow\infty}\int_{a}^{x}f_{n}=\int_{a}^{x}f$ if and only if the sequence of integrals $\left\{ \int_{a}^{x}(f_{n}-g)  \right\}$ is equiabsolutely continuous \textup{(EAC)} on $[a,b]$.
	\end{theorem}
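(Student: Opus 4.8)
The plan is to reduce the assertion to a classical fact about Lebesgue integration. First I would replace $f_{n}$ by $f_{n}-g$: since $g\in\mathcal{LP}[a,b]$, Theorem \ref{Theorem 1} shows $\mathcal{LP}[a,b]$ is linear, so $f\in\mathcal{LP}[a,b]$ iff $f-g\in\mathcal{LP}[a,b]$, while $x\mapsto\int_{a}^{x}g$ is a fixed continuous function, whence $\int_{a}^{x}f_{n}\to\int_{a}^{x}f$ iff $\int_{a}^{x}(f_{n}-g)\to\int_{a}^{x}(f-g)$; the EAC hypothesis is already phrased in terms of $\{\int_{a}^{\cdot}(f_{n}-g)\}$. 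Thus we may assume $g\equiv 0$ and $f_{n}\geqslant 0$ a.e. By Theorem \ref{Non-negative} each $f_{n}$ is then Lebesgue integrable, and since the Laplace integral includes the Perron integral (which in turn agrees with the Lebesgue integral on non-negative integrands), $\int_{a}^{x}f_{n}$ equals the indefinite Lebesgue integral of $f_{n}$. The statement has now become a statement about a sequence of non-negative Lebesgue integrable functions on $[a,b]$.

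For the necessity ($\Rightarrow$), suppose $f\in\mathcal{LP}[a,b]$ and $\int_{a}^{x}f_{n}\to\int_{a}^{x}f$. Since $f\geqslant 0$ a.e., Theorem \ref{Non-negative} gives that $f$ is Lebesgue integrable, with $\int_{a}^{x}f$ its Lebesgue primitive. Taking $x=b$ yields $\int_{a}^{b}f_{n}\to\int_{a}^{b}f<\infty$; combined with $f_{n}\to f$ a.e. and $f_{n}\geqslant 0$, applying Fatou's lemma to the non-negative functions $f_{n}+f-|f_{n}-f|$ (Scheff\'e's lemma) gives $\int_{a}^{b}|f_{n}-f|\to 0$. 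From this $L^{1}$-convergence, equiabsolute continuity of $\{\int_{a}^{\cdot}f_{n}\}$ follows in the standard way: given $\epsilon>0$, choose $N$ with $\int_{a}^{b}|f_{n}-f|<\epsilon/2$ for $n>N$, then $\delta>0$ so small that $\lambda(E)<\delta$ forces $\int_{E}f<\epsilon/2$ and $\int_{E}f_{j}<\epsilon$ for each $j\leqslant N$; then $\int_{E}f_{n}<\epsilon$ for all $n$, and summing over a finite partition into pieces of total length $<\delta$ recovers the EAC condition as stated.

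For the sufficiency ($\Leftarrow$), assume $\{\int_{a}^{\cdot}f_{n}\}$ is EAC. Since each $f_{n}\geqslant 0$, this says precisely that $\sup_{n}\int_{E}f_{n}$ tends to $0$ as $\lambda(E)\to 0$; moreover, splitting $[a,b]$ into finitely many subintervals each shorter than the $\delta$ associated with $\epsilon=1$ shows $\sup_{n}\int_{a}^{b}f_{n}<\infty$. Hence $\{f_{n}\}$ is uniformly integrable on the finite-measure space $[a,b]$, so by Vitali's convergence theorem $f$ is Lebesgue integrable (hence $f\in\mathcal{LP}[a,b]$) and $\int_{a}^{b}|f_{n}-f|\to 0$, which forces $\int_{a}^{x}f_{n}\to\int_{a}^{x}f$ uniformly in $x$.

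The main obstacle is the identification made in the first paragraph: that for a non-negative Laplace integrable function the Laplace integral coincides with the Lebesgue integral. This rests on Theorem \ref{Non-negative} together with the monotonicity and uniqueness-of-primitive results behind Theorems \ref{THEOREM} and \ref{A.E. Laplace Differentiability}; once it is available, both implications reduce to the classical Vitali and Scheff\'e theorems. An alternative route, staying inside the Laplace framework, would be to transcribe the proof of the corresponding Perron-integral theorem of \cite{ONTCSPI}, replacing Perron major and minor functions throughout by Laplace ones.
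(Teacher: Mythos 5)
Your proposal is correct. It shares with the paper the one essential observation — that $f_{n}-g\geqslant 0$ a.e.\ and Laplace integrable forces $f_{n}-g$ to be absolutely (Lebesgue, hence Perron) integrable via Theorem \ref{Non-negative}, so the whole problem lives in the absolutely integrable world — but after that the two arguments diverge. The paper stops there and simply invokes the convergence theorem of \cite{ONTCSPI} for Perron integrals applied to the sequence $\{f_{n}-g\}$, then translates back by adding $g$; its proof is three lines and entirely a citation. You instead unpack that black box: you note that for non-negative integrands the Laplace, Perron and Lebesgue integrals coincide, identify EAC of $\{\int_{a}^{\cdot}(f_{n}-g)\}$ with uniform integrability of $\{f_{n}-g\}$ (which for non-negative integrands does follow from the interval version by outer regularity, and the uniform $L^{1}$ bound comes from partitioning $[a,b]$ as you say), and then run Scheff\'e for necessity and Vitali for sufficiency. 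The payoff of your route is a self-contained proof that depends only on classical Lebesgue theory rather than on the external Perron-integral result; the cost is that it only covers the case the paper actually needs (minorant reduced to $0$), whereas the cited theorem of \cite{ONTCSPI} is a statement about general Perron-integrable sequences. Your closing remark about transcribing the proof of \cite{ONTCSPI} with Laplace major and minor functions describes a third route that neither you nor the paper carries out. No gaps; the identification of the Laplace integral with the Lebesgue integral for non-negative integrands, which you flag as the main obstacle, is indeed secured by Theorem \ref{Non-negative} together with the inclusion of the Perron integral in the Laplace integral and the uniqueness of the value of the Laplace integral.
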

	\begin{proof}
		Note that $f_{n}-g$ is Perron integrable. Thus by the Theorem of \cite{ONTCSPI}, we get $f-g\in \mathsf{P}[a,b]$ and $\lim\limits_{n\rightarrow\infty}(\mathsf{P})\int_{a}^{x}(f_{n}-g)=(\mathsf{P})\int_{a}^{x}(f-g)$ iff the sequence of integrals $\left\{ (\mathsf{P})\int_{a}^{x}(f_{n}-g)  \right\}$ is EAC on $[a,b]$. But as $g\in \mathcal{LP}[a,b]$, we get $f\in \mathcal{LP}[a,b]$ and $\lim\limits_{n\rightarrow\infty}\int_{a}^{x}f_{n}=\int_{a}^{x}f$ iff $\left\{ \int_{a}^{x}(f_{n}-g)  \right\}$ is EAC on $[a,b]$
	\end{proof}

	\begin{corollary}[\bf Dominated convergence theorem]\label{Dominated convergence theorem}
		Let $\left\{f_{n}\right\}$ be a sequence of Laplace integrable functions defined over $[a,b]$ such that
		\begin{enumerate}[\upshape A.]
			\item for all $n\in \mathbb{N}$, $g\leqslant f_{n}\leqslant h$ a.e. on $[a,b]$, where $g$ and $h$ are Laplace integrable on $[a,b]$ and
			\item $\lim\limits_{n\rightarrow\infty}f_{n}=f$ a.e. on $[a,b]$.
		\end{enumerate}
		Then $f$ is Laplace integrable on $[a,b]$ and $\lim\limits_{n\rightarrow\infty}\int_{a}^{x}f_{n}=\int_{a}^{x}f$, $\forall x \in [a,b]$.
	\end{corollary}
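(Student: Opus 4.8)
The plan is to obtain this as an immediate consequence of Theorem \ref{convergencetheorem1}, applied with the lower bounding function $g$. The hypotheses of that theorem are in force: each $f_n$ is Laplace integrable with $f_n \geqslant g$ a.e.\ on $[a,b]$, the function $g$ is Laplace integrable, and $f_n \to f$ a.e.\ on $[a,b]$. Hence it suffices to verify that the sequence of indefinite integrals $\left\{\int_a^x(f_n-g)\right\}$ is equiabsolutely continuous on $[a,b]$; once that is done, Theorem \ref{convergencetheorem1} delivers simultaneously that $f\in\mathcal{LP}[a,b]$ and that $\int_a^x f_n\to\int_a^x f$ for every $x\in[a,b]$.

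To check the EAC condition, first note that for each $n$ the function $f_n-g$ is Laplace integrable by Theorem \ref{Theorem 1}, and $0\leqslant f_n-g\leqslant h-g$ a.e.\ on $[a,b]$. By Theorem \ref{Non-negative}, both $f_n-g$ and $h-g$ are Lebesgue integrable, and since the Laplace integral extends the Lebesgue integral, the indefinite integrals $\int_a^x(f_n-g)$ and $\int_a^x(h-g)$ coincide with the corresponding Lebesgue integrals. Now fix $\epsilon>0$. The indefinite Lebesgue integral of $h-g$ is absolutely continuous, so there is $\delta>0$ such that $\int_E(h-g)<\epsilon$ whenever $E\subseteq[a,b]$ is measurable with $\lambda(E)<\delta$. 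Given non-overlapping subintervals $[c_i,d_i]$ of $[a,b]$ with $\sum_i(d_i-c_i)<\delta$, set $E=\bigcup_i[c_i,d_i]$; then, using $f_n-g\geqslant 0$ a.e.,
\[
\sum_i\left|\int_{c_i}^{d_i}(f_n-g)\right|=\int_E(f_n-g)\leqslant\int_E(h-g)<\epsilon
\]
for every $n$, which is precisely equiabsolute continuity of the family $\left\{\int_a^x(f_n-g)\right\}$.

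I do not anticipate a genuine obstacle here; the only subtlety is the bookkeeping that reduces EAC of the whole family to domination by the single Lebesgue integrable function $h-g$, together with the identification of the Laplace indefinite integrals of the nonnegative functions $f_n-g$ and $h-g$ with their Lebesgue counterparts, both of which are straightforward consequences of Theorems \ref{Theorem 1} and \ref{Non-negative}.
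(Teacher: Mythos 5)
Your proof is correct and takes essentially the same route as the paper's: apply Theorem \ref{convergencetheorem1} with the lower bound $g$, then verify equiabsolute continuity of $\left\{\int_a^x(f_n-g)\right\}$ from the domination $0\leqslant f_n-g\leqslant h-g$ and the absolute continuity of $\int_a^x(h-g)$. The only difference is that you spell out the details (Lebesgue integrability of the nonnegative functions via Theorem \ref{Non-negative} and the $\epsilon$--$\delta$ bookkeeping) that the paper compresses into one sentence.
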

	\begin{proof}
		Let $F_{n}(x)=\int_{a}^{x}(f_{n}-g)$, $x\in [a,b]$. As $0\leqslant f_{n}-g\leqslant h-g$ and $\phi(x)=\int_{a}^{x}(h-g)$ is absolutely continuous on $[a,b]$, $\left\{ F_n \right\}$ is EAC on $[a,b]$. This completes the proof.
	\end{proof}

	\begin{theorem}[\bf Sequence of integrals]\label{sequence of integrals}
		Let $f\in \mathcal{LP}[a,b]$ and let $\left\{g_{n}\right\}$ be a sequence in $\mathcal{BV}[a,b]$ such that $\{g_{n}\}$ is of uniform bounded variation and converges pointwise to some $g$ on $[a,b]$, then $\lim\limits_{n\rightarrow \infty}\int_{a}^{b}fG_{n}=\int_{a}^{b}fG$, where $G_{n}(x)=\int_{a}^{x}g_{n}$ and $G(x)=\int_{a}^{x}g$.
	\end{theorem}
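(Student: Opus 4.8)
The plan is to reduce the claim to the estimate proved at the end of the integration-by-parts section together with the classical dominated convergence theorem. Put $\tilde{g}_n=g_n-g$ and $\tilde{G}_n(x)=G_n(x)-G(x)=\int_a^x\tilde{g}_n$. First I would extract the standard consequences of the hypothesis of uniform bounded variation: if $V_{g_n}[a,b]\leqslant M$ for all $n$, then $|g_n(x)|\leqslant |g_n(a)|+M$, and since $g_n(a)\to g(a)$ the sequence $\{g_n(a)\}$ is bounded, so the $g_n$ are uniformly bounded, say $|g_n|\leqslant C$ on $[a,b]$; moreover, for any partition of $[a,b]$ the variation sum for $g$ is the pointwise limit of the variation sums for $g_n$, hence $V_g[a,b]\leqslant M$ and $g\in\mathcal{BV}[a,b]$. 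Consequently $\tilde{g}_n\in\mathcal{BV}[a,b]$ with $|\tilde{g}_n|\leqslant 2C$, and by Theorem \ref{IntByParts1} each of $fG_n$, $fG$ and $f\tilde{G}_n$ lies in $\mathcal{LP}[a,b]$, so that $\int_a^b fG_n-\int_a^b fG=\int_a^b f\tilde{G}_n$ by the linearity in Theorem \ref{Theorem 1}.

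Next I would apply the estimate theorem (the last theorem of the integration-by-parts section) to the pair $f$, $\tilde{g}_n$. Since $\tilde{G}_n(a)=0$ we have $\inf_{x\in[a,b]}|\tilde{G}_n(x)|=0$, so the estimate collapses to
\[
\left|\int_a^b fG_n-\int_a^b fG\right|=\left|\int_a^b f\tilde{G}_n\right|\leqslant \|f\|_{[a,b]}\,V_{\tilde{G}_n}[a,b].
\]
Because $\tilde{g}_n$ is bounded and measurable it is Lebesgue integrable, so $\tilde{G}_n$ is absolutely continuous with $\tilde{G}_n'=\tilde{g}_n$ a.e.; hence $V_{\tilde{G}_n}[a,b]=\int_a^b|\tilde{G}_n'|=\int_a^b|g_n-g|$. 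Finally, since $g_n\to g$ pointwise on $[a,b]$ and $|g_n-g|\leqslant 2C$, the dominated convergence theorem gives $\int_a^b|g_n-g|\to 0$, whence $\bigl|\int_a^b fG_n-\int_a^b fG\bigr|\to 0$, which is the assertion.

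There is no serious obstacle here; the points that need a little attention are (i) drawing the uniform bound on $\{g_n\}$ and the membership $g\in\mathcal{BV}[a,b]$ out of the uniform-bounded-variation hypothesis, and (ii) identifying $V_{\tilde{G}_n}[a,b]$ with $\int_a^b|g_n-g|$, which is exactly what lets the dominated convergence theorem finish the job. An alternative route avoiding the estimate theorem is to apply integration by parts (Theorem \ref{IntByParts1}) term by term, writing $\int_a^b fG_n=F(b)G_n(b)-(\mathsf{P})\int_a^b Fg_n$ with $F(x)=\int_a^x f$, and then pass to the limit in $G_n(b)=\int_a^b g_n$ and in the Perron integral $(\mathsf{P})\int_a^b Fg_n$ — the integrands $Fg_n$ being uniformly bounded and convergent pointwise to $Fg$ — again via dominated convergence; I expect the first route to be the cleaner one to write out.
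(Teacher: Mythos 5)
Your argument is correct, but it follows a genuinely different route from the paper's. The paper applies Theorem \ref{IntByParts1} term by term to write $\int_{a}^{b}fG_{n}=F(b)G_{n}(b)-(\mathsf{P})\int_{a}^{b}Fg_{n}$ and then passes to the limit in $G_{n}(b)$ and in $(\mathsf{P})\int_{a}^{b}Fg_{n}$ by invoking an external convergence theorem for Henstock integrals against integrators of uniform bounded variation (Corollary 3.2 of the cited reference) --- essentially your ``alternative route,'' except that you would replace the citation by the classical dominated convergence theorem, which is legitimate since $F$ is bounded and the $g_{n}$ are uniformly bounded. Your primary route instead applies the unlabelled estimate $\left|\int_{a}^{b}fG\right|\leqslant\left|\int_{a}^{b}f\right|\inf_{x\in[a,b]}|G(x)|+\|f\|_{[a,b]}V_{G}[a,b]$ to the difference $\tilde{g}_{n}=g_{n}-g$: since $\tilde{G}_{n}(a)=0$ the first term drops out, and since $\tilde{G}_{n}$ is absolutely continuous, $V_{\tilde{G}_{n}}[a,b]=\int_{a}^{b}|g_{n}-g|\rightarrow 0$ by dominated convergence. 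All the supporting steps you flag are sound: the uniform bound $|g_{n}|\leqslant|g_{n}(a)|+M$ together with $g_{n}(a)\rightarrow g(a)$ gives uniform boundedness, the partition-sum argument gives $g\in\mathcal{BV}[a,b]$ (which the paper asserts without proof as well), and linearity (Theorem \ref{Theorem 1}) justifies $\int_{a}^{b}fG_{n}-\int_{a}^{b}fG=\int_{a}^{b}f\tilde{G}_{n}$ once Theorem \ref{IntByParts1} guarantees all three integrals exist. What your route buys is self-containedness --- it uses only results stated in the paper plus the classical Lebesgue theory --- and an explicit quantitative bound $\left|\int_{a}^{b}fG_{n}-\int_{a}^{b}fG\right|\leqslant\|f\|_{[a,b]}\int_{a}^{b}|g_{n}-g|$; the paper's route is shorter on the page but leans on an external convergence result.
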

	\begin{proof}
		As $\{g_{n}\}$ is of uniform bounded variation, $g\in \mathcal{BV}[a,b]$. Thus $\int_{a}^{b}fG$ is well defined. Now by Corollary 3.2 of \cite{LimitsHenstock}, we get $\lim\limits_{n\rightarrow \infty}\int_{a}^{b}g_{n}=\int_{a}^{b}g$ and $\lim\limits_{n\rightarrow \infty}\int_{a}^{b}Fg_{n}=\int_{a}^{b}Fg$, where $F(x)=\int_{a}^{x}f$. Thus
		\begin{align*}
		\lim\limits_{n\rightarrow \infty}\int_{a}^{b}fG_{n}&=\lim\limits_{n\rightarrow \infty}\left\{ F(b)G_{n}(b)-\int_{a}^{b}Fg_{n} \right\}\\
		&=F(b)G(b)-\int_{a}^{b}Fg=\int_{a}^{b}fG.\qedhere
		\end{align*}
	\end{proof}

	\section{Mean value theorems}\label{Mean value theorems}

	\par  The first mean value theorem for Laplace integral given below is an extension of the generalised mean value theorem stated in \cite{TTUGRIntegral}.

	\begin{theorem}[\bf The first mean value theorem]\label{The first mean value theorem}
		Let $f\in \mathcal{LP}[a,b]\cap\mathcal{D}[a,b]$, where $\mathcal{D}[a,b]$ is the set of all functions having intermediate value property. Let $g\in \mathcal{LP}[a,b]$. If $g$ is non-negative and $fg\in \mathcal{LP}[a,b]$, then
		\[
		\int_{a}^{b}fg=f(\xi)\int_{a}^{b}g\quad\text{for some $\xi\in[a,b]$}.
		\]
		In particular, we have $\int_{a}^{b}f=f(\xi)(b-a)$ for some $\xi\in[a,b]$.
	\end{theorem}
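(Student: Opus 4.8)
The plan is to follow the classical proof of the first mean value theorem. First I would reduce to the case $\int_{a}^{b}g>0$; then show that the average value $c:=(\int_{a}^{b}fg)/(\int_{a}^{b}g)$ satisfies $\inf_{[a,b]}f\leqslant c\leqslant\sup_{[a,b]}f$; and finally realise $c$ as a value of $f$, using the intermediate value property of $f$ in the generic case and a separate measure-theoretic argument in the two extremal cases.

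To dispose of the degenerate case $\int_{a}^{b}g=0$: since $g\geqslant 0$ a.e., Theorem \ref{Theorem 1} makes $x\mapsto\int_{a}^{x}g$ non-decreasing, so it vanishes identically (being $0$ at both endpoints), and then Theorem \ref{A.E. Laplace Differentiability} gives $g=0$ a.e.; hence $fg=0$ a.e. and $\int_{a}^{b}fg=0$ by Theorem \ref{Theorem 1}, so the claimed identity holds with any $\xi\in[a,b]$.

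Now assume $\int_{a}^{b}g>0$ and put $c=(\int_{a}^{b}fg)/(\int_{a}^{b}g)$, $m=\inf_{[a,b]}f$ and $M=\sup_{[a,b]}f$. When $M<\infty$, the pointwise bound $f\leqslant M$ together with $g\geqslant 0$ a.e. gives $fg\leqslant Mg$ a.e., so $\int_{a}^{b}fg\leqslant M\int_{a}^{b}g$ by Theorem \ref{Theorem 1} and hence $c\leqslant M$; symmetrically $c\geqslant m$. If $m<c<M$, then by the definitions of $m$ and $M$ there are points $x_{1},x_{2}\in[a,b]$ with $f(x_{1})<c<f(x_{2})$, and since $f\in\mathcal{D}[a,b]$ it attains the value $c$ at some $\xi$ lying between $x_{1}$ and $x_{2}$; as $\int_{a}^{b}fg=c\int_{a}^{b}g$ by the definition of $c$, this is the desired equality.

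The step I expect to be the main obstacle is the extremal case, say $c=M$ (the case $c=m$ is symmetric); note that $f$ is only assumed Darboux, so $\sup f$ need not be attained and the intermediate value property by itself does not locate $\xi$. Here $Mg-fg\geqslant 0$ a.e. and $\int_{a}^{b}(Mg-fg)=M\int_{a}^{b}g-c\int_{a}^{b}g=0$, so the same reasoning as in the degenerate case (its primitive is constant, then apply Theorem \ref{A.E. Laplace Differentiability}) forces $fg=Mg$ a.e.; since $\int_{a}^{b}g>0$ and $g\geqslant 0$ a.e., the set $\{x:g(x)>0\}$ has positive measure, so $f=M$ on a set of positive measure, whence $f(\xi)=M=c$ for some $\xi\in[a,b]$. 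The ``in particular'' statement follows by applying the result with $g\equiv 1$, which is non-negative and Laplace integrable and satisfies $fg=f\in\mathcal{LP}[a,b]$, giving $\int_{a}^{b}f=f(\xi)(b-a)$.
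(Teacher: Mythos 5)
Your proof is correct and follows the same basic strategy as the paper's: reduce to $\int_{a}^{b}g>0$, sandwich the average $c=\bigl(\int_{a}^{b}fg\bigr)/\bigl(\int_{a}^{b}g\bigr)$ between $\inf f$ and $\sup f$, and invoke the Darboux property. The only real difference is in coverage: the paper writes out a single case, $-\infty<\inf f<f(x)<\sup f=\infty$ for all $x$, where the strict inequalities $\inf f<c<\sup f$ come for free from $\int_{a}^{b}(f-m)g>0$ (a nonnegative Laplace integrable function positive on a set of positive measure has positive integral), and dismisses the remaining cases as ``similar''. Your explicit handling of the extremal cases $c=\sup f$ or $c=\inf f$ --- deducing $fg=Mg$ a.e.\ from the vanishing of the integral of a nonnegative integrand and concluding that $f$ attains $M$ on a set of positive measure --- is exactly the content those omitted cases require, so your write-up is, if anything, more complete than the paper's.
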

	\begin{proof}
		We only prove the following case,
		\begin{align*}
		-\infty<\inf\limits_{x\in[a,b]}f(x)<f(x)<\sup\limits_{x\in[a,b]}f(x)=\infty,\,\,\forall x\in[a,b].
		\end{align*}
		Proofs of other possible cases are similar. Let $A=\left\{ x\in[a,b]\mid g(x)>0 \right\}$. If $\int_{a}^{b}g=0$, then $\lambda(A)=0$ and in this case, $\int_{a}^{b}fg=0=f(a)\int_{a}^{b}g$. Assume $\lambda(A)>0$ and $m=\inf\limits_{x\in[a,b]}f(x)$. Then $(f-m)g>0$ on $A$, $\int_{a}^{b}(f-m)g>0$ and $\int_{a}^{b}g>0$; which implies $m<\int_{a}^{b}fg/\int_{a}^{b}g<\infty$. So there is $\kappa,\eta\in[a,b]$ such that $f(\kappa)<\int_{a}^{b}fg/\int_{a}^{b}g<f(\eta)$. This completes the proof.     	
	\end{proof}

	\begin{theorem}[\bf The second mean value theorem]\label{2nd mean}
		Let $f\in \mathcal{LP}[a,b]$ and let $g\in \mathcal{BV}[a,b]$ be such that $g$ does not change sign. If $G$ is a primitive of $g$, then
		\begin{gather*}
		\int_{a}^{b}fG=G(a)\int_{a}^{\xi}f+G(b)\int_{\xi}^{b}f\quad\text{for some $\xi\in[a,b]$}.
		\end{gather*}
	\end{theorem}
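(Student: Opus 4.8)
The plan is to deduce the statement from the integration by parts formula (Theorem \ref{IntByParts1}) together with the first mean value theorem already proved (Theorem \ref{The first mean value theorem}). First I would reduce to the case $g\geqslant 0$ on $[a,b]$: since $-G$ is a primitive of $-g$ and replacing $(g,G)$ by $(-g,-G)$ multiplies both sides of the asserted identity by $-1$ while leaving $\xi$ admissible, it suffices to treat one sign. With $g\geqslant 0$, note that $g\in\mathcal{BV}[a,b]$ is bounded, hence Lebesgue integrable, so $G$ is absolutely continuous — in particular continuous — and non-decreasing. Put $F(x)=\int_{a}^{x}f$.

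Next I would set $\widetilde G(x)=\int_{a}^{x}g=G(x)-G(a)$ and apply Theorem \ref{IntByParts1} to the pair $f,g$, obtaining $f\widetilde G\in\mathcal{LP}[a,b]$ and $\int_{a}^{b}f\widetilde G=F(b)\widetilde G(b)-(\mathsf{P})\int_{a}^{b}Fg$. Since $fG=f\widetilde G+G(a)f$ with $f\in\mathcal{LP}[a,b]$, adding $G(a)\int_{a}^{b}f=G(a)F(b)$ to both sides and using $\widetilde G(b)=G(b)-G(a)$ gives $fG\in\mathcal{LP}[a,b]$ and
\[
\int_{a}^{b}fG=F(b)G(b)-(\mathsf{P})\int_{a}^{b}Fg .
\]
Now $F$ is continuous, hence $F\in\mathcal{D}[a,b]\cap\mathcal{LP}[a,b]$, and $Fg$ is bounded and measurable, hence Lebesgue — therefore Perron and Laplace — integrable. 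Applying Theorem \ref{The first mean value theorem} to $F$ and $g$ produces $\xi\in[a,b]$ with $(\mathsf{P})\int_{a}^{b}Fg=F(\xi)\int_{a}^{b}g=F(\xi)\bigl(G(b)-G(a)\bigr)$; the degenerate case $\int_{a}^{b}g=0$, i.e.\ $G$ constant, is subsumed with $\xi=a$. Substituting into the displayed identity and regrouping, using $\int_{\xi}^{b}f=F(b)-F(\xi)$ and $\int_{a}^{\xi}f=F(\xi)$, gives
\[
\int_{a}^{b}fG=G(a)F(\xi)+G(b)\bigl(F(b)-F(\xi)\bigr)=G(a)\int_{a}^{\xi}f+G(b)\int_{\xi}^{b}f ,
\]
which is the claim.

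I do not expect a serious obstacle here: the substantive tools are exactly Theorems \ref{IntByParts1} and \ref{The first mean value theorem}, and what remains is routine — identifying the Laplace, Perron and Lebesgue integrals of the Lebesgue integrable functions $g$ and $Fg$ (legitimate because the Laplace integral extends the Perron integral, which agrees with the Lebesgue integral on $L^{1}$), and bookkeeping with the additive constant. The one point demanding care is that ``$G$ is a primitive of $g$'' means $G(x)=G(a)+\int_{a}^{x}g$ rather than $\int_{a}^{x}g$, so after invoking the normalized integration by parts formula the constant $G(a)$ must be restored by hand, as done above.
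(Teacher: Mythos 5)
Your proof is correct. The paper does not actually write out a proof of this theorem --- it only remarks that the argument is ``similar to that of Theorem 12.5 of \cite{AMTOIntegration}'' --- and the route you take (integration by parts via Theorem \ref{IntByParts1} to get $\int_a^b fG = F(b)G(b)-(\mathsf{P})\int_a^b Fg$, then the first mean value theorem applied to the continuous function $F$ and the non-negative $g$, with the sign reduction and the $G(a)$ normalization handled explicitly) is precisely the standard argument that reference uses, now expressed entirely in terms of the paper's own Theorems \ref{IntByParts1} and \ref{The first mean value theorem}. The one point you rightly flag --- that ``primitive'' here means $G(x)=G(a)+\int_a^x g$ while Theorem \ref{IntByParts1} is stated for the normalized primitive $\int_a^x g$ --- is handled correctly, and the hypotheses of Theorem \ref{The first mean value theorem} ($F\in\mathcal{LP}[a,b]\cap\mathcal{D}[a,b]$, $g\geqslant 0$ Laplace integrable, $Fg$ bounded measurable hence Laplace integrable) are all verified. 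No gaps.
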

	
	\par The proof of this theorem is similar to that of Theorem 12.5 of \cite[p.~193]{AMTOIntegration}.

	\section{Taylor's theorem}\label{Taylor-theorem}
	
	\par In the following theorem, Laplace integral is used to write the integral remainder form of Taylor's theorem, and it is an extension of the Theorem 1 of \cite{ERTUHK}. Fortunately, the estimates, given in Theorem 4 of \cite{ERTUHK} also work for the remainder given in \eqref{Taylor's remainder}. So instead of giving all of those estimates, we only give the sup norm estimate of the remainder. For another form of the remainder, see Theorem 1.6 of \cite{TLD2}.
	
	\begin{theorem}[\bf Taylor's theorem]\label{Taylor's theorem}
		Let $f$, $f^{'}$,...,$f^{(n)}$ be continuous on $[a,b]$ and let $LD_{n+1}f$ exists a.e. on $[a,b]$. If
		\begin{align}\label{Taylor0}
		f^{(n)}(x)-f^{(n)}(a)=\int_{a}^{x}LD_{n+1}f,
		\end{align}
		then
		\begin{align}
		f(x)=\sum\limits_{k=0}^{n}\frac{f^{(k)}(a)}{k!}(x-a)^{k}+R_{n,a}(x)\quad\text{for all $x\in [a,b]$}, \label{Taylor1}
		\end{align}
		where
		\begin{align}\label{Taylor's remainder}
		R_{n,a}(x)=\frac{1}{n!}\int_{a}^{x}LD_{n+1}f(t)(x-t)^{n}\,dt.
		\end{align}
		Also,
		\begin{align}\label{Taylor's remainder estimate}
		\|R_{n,a}\|_{\infty}\leqslant\frac{(b-a)^{n}}{n!}\|LD_{n+1}f\|_{[a,b]}. 
		\end{align}
	\end{theorem}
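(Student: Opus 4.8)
The plan is to prove the three conclusions \eqref{Taylor1}, \eqref{Taylor's remainder} and \eqref{Taylor's remainder estimate} in turn, the first two by induction on $n$ and the last by a direct estimate. For the base case $n=0$, the hypothesis \eqref{Taylor0} reads $f(x)-f(a)=\int_a^x LD_1 f$, and \eqref{Taylor1}--\eqref{Taylor's remainder} become $f(x)=f(a)+\int_a^x LD_1 f(t)\,dt$, which is exactly the hypothesis; so there is nothing to prove. For the inductive step, assume the statement holds for $n-1$ applied to $f'$ in place of $f$ (note $f', f'', \dots, (f')^{(n)} = f^{(n+1)}$... wait—more carefully, one applies the inductive hypothesis with $n-1$ to the function $f$ itself but peeling one term, or alternatively integrates by parts). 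I would in fact proceed by integrating the remainder \eqref{Taylor's remainder} by parts: writing $\int_a^x LD_{n+1}f(t)(x-t)^n\,dt$ and using Theorem~\ref{IntByParts1} with $f \leftrightarrow LD_{n+1}f$ (which is Laplace integrable by \eqref{Taylor0} together with Theorem~\ref{THEOREM}, since $f^{(n)}$ is continuous and $LD_{n+1}f$ exists a.e., hence nearly everywhere after adjusting on a null set—this needs a small remark) and $g(t) = -(x-t)^{n-1}$, whose primitive is $G(t) = (x-t)^n/n - x^n/n$ (choosing the constant so $G(a)$ is whatever is convenient). The boundary and Perron-integral terms produced by integration by parts reconstruct $\frac{f^{(n)}(a)}{n!}(x-a)^n$ plus $R_{n-1,a}(x)$, and combining with the inductive hypothesis for $f^{(n-1)}$ yields \eqref{Taylor1}.

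More explicitly, the cleanest route is: apply Theorem~\ref{THEOREM} (fundamental theorem of calculus I) to see that $LD_{n+1}f \in \mathcal{LP}[a,b]$, so that $R_{n,a}(x)$ in \eqref{Taylor's remainder} is well-defined (the integrand is $LD_{n+1}f(t)$ times the continuous, hence bounded and $BV$, function $(x-t)^n$, so the product is Laplace integrable by Theorem~\ref{IntByParts1}). Then set $F(t) = \int_a^t LD_{n+1}f = f^{(n)}(t) - f^{(n)}(a)$ by \eqref{Taylor0}, and apply integration by parts with the $BV$ function $g(t)$ chosen so that $G(t) = \int_a^t g = \frac{1}{n!}(x-t)^n - \frac{1}{n!}(x-a)^n$; this gives
\begin{align*}
\frac{1}{n!}\int_a^x LD_{n+1}f(t)(x-t)^n\,dt &= \int_a^x LD_{n+1}f(t)\,G(t)\,dt + \frac{(x-a)^n}{n!}\bigl(f^{(n)}(x)-f^{(n)}(a)\bigr)\\
&= F(x)G(x) - (\mathsf{P})\!\int_a^x F(t)\,g(t)\,dt + \frac{(x-a)^n}{n!}\bigl(f^{(n)}(x)-f^{(n)}(a)\bigr).
\end{align*}
Since $G(x) = -\frac{(x-a)^n}{n!}$, the term $F(x)G(x)$ cancels part of the last term, leaving $-\frac{f^{(n)}(a)}{n!}(x-a)^n$ minus a Perron integral; the Perron integral $-(\mathsf{P})\int_a^x F(t) g(t)\,dt$, after substituting $F(t) = f^{(n)}(t) - f^{(n)}(a)$ and $g(t) = (x-t)^{n-1}/(n-1)!$, splits into $\frac{1}{(n-1)!}\int_a^x f^{(n)}(t)(x-t)^{n-1}\,dt$ (which is $R_{n-1,a}(x)$, now a genuine Perron/Lebesgue integral since $f^{(n)}$ is continuous) minus $\frac{f^{(n)}(a)}{(n-1)!}\int_a^x (x-t)^{n-1}\,dt = \frac{f^{(n)}(a)}{n!}(x-a)^n$. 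Tallying the three occurrences of $\frac{f^{(n)}(a)}{n!}(x-a)^n$ shows $R_{n,a}(x) = R_{n-1,a}(x) - \frac{f^{(n)}(a)}{n!}(x-a)^n$, and then the classical Taylor expansion with the $(n-1)$st-order integral remainder for the $C^n$ function $f$ (which is the standard Lebesgue-integral Taylor theorem, available since $f \in C^n$) converts $R_{n-1,a}(x)$ into $\sum_{k=n}^{?}$... more simply, $f(x) = \sum_{k=0}^{n-1}\frac{f^{(k)}(a)}{k!}(x-a)^k + R_{n-1,a}(x)$ by the ordinary Taylor theorem, and substituting $R_{n-1,a}(x) = R_{n,a}(x) + \frac{f^{(n)}(a)}{n!}(x-a)^n$ gives \eqref{Taylor1}.

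For the estimate \eqref{Taylor's remainder estimate}, I would apply the inequality from the unnamed theorem following Corollary (the one analogous to Lemma 24 of \cite{HKFT}): with $\tilde f = LD_{n+1}f \in \mathcal{LP}[a,b]$ and $\tilde g(t) = \frac{1}{n!}(x-t)^n$, which is nonnegative and of bounded variation with total variation $\frac{(x-a)^n}{n!}$ on $[a,x]$ and infimum $0$, one gets $\bigl|R_{n,a}(x)\bigr| = \bigl|\int_a^x \tilde f \tilde G'\bigr| \le \bigl|\int_a^x \tilde f\bigr| \cdot 0 + \|\tilde f\|_{[a,x]} \cdot \frac{(x-a)^n}{n!} \le \frac{(b-a)^n}{n!}\|LD_{n+1}f\|_{[a,b]}$, uniformly in $x$, which is \eqref{Taylor's remainder estimate}. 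The main obstacle I anticipate is bookkeeping: getting the constants in the primitive $G$ and the signs in the three copies of $\frac{f^{(n)}(a)}{n!}(x-a)^n$ to cancel correctly, and making sure that at each stage the object being integrated by parts genuinely falls under the hypotheses of Theorem~\ref{IntByParts1} (Laplace-integrable $\times$ $BV$), in particular verifying that $LD_{n+1}f$, which exists only a.e., is nearly-everywhere finite so that Theorem~\ref{THEOREM} applies and \eqref{Taylor0} is consistent.
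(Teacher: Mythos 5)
Your argument is correct in substance but organised differently from the paper's. The paper proves \eqref{Taylor1} by induction on $n$, performing one integration by parts (Theorem \ref{IntByParts1}) per step to pass from $R_{k,a}$ to $R_{k+1,a}$; you perform a single integration by parts on $R_{n,a}$ to land on the classical $(n-1)$st-order integral remainder of the $C^{n}$ function $f$ and then quote the ordinary Taylor theorem. These are the same computation unrolled differently, since in the paper's first $n-1$ inductive steps $LD_{k+1}f=f^{(k+1)}$ and the step is exactly the classical one; your version is shorter if one imports the classical result, the paper's is self-contained. The routes genuinely diverge on the estimate \eqref{Taylor's remainder estimate}: the paper applies the second mean value theorem (Theorem \ref{2nd mean}) with $G(t)=(x-t)^{n}/n!$, $G(x)=0$, to get $R_{n,a}(x)=\frac{(x-a)^{n}}{n!}\int_{a}^{\xi}LD_{n+1}f$ and reads off the bound from the definition of $\|\cdot\|_{[a,x]}$, whereas you use the H\"older-type inequality $|\int fG|\leqslant|\int f|\inf|G|+\|f\|_{[a,b]}V_{G}$. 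Both yield the stated constant.

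Three points to tighten. First, $LD_{n+1}f\in\mathcal{LP}[a,b]$ is immediate from the hypothesis \eqref{Taylor0}, which asserts that the integral exists; Theorem \ref{THEOREM} is not needed, and your parenthetical ``a.e.\ hence nearly everywhere after adjusting on a null set'' is false as written (nearly everywhere means off a countable set, strictly stronger than a.e.), so it is fortunate that it is dispensable. Second, in Theorem \ref{IntByParts1} and in the H\"older-type inequality the paper normalises $G(a)=\int_{a}^{a}g=0$, while your $G(t)=(x-t)^{n}/n!$ has $G(a)=(x-a)^{n}/n!$. For the integration by parts this is harmless (restore the constant by linearity, which your displayed computation in effect does via the term $\frac{(x-a)^{n}}{n!}\bigl(f^{(n)}(x)-f^{(n)}(a)\bigr)$), but for the estimate the normalised primitive $G_{0}(t)=\bigl((x-t)^{n}-(x-a)^{n}\bigr)/n!$ only yields $|R_{n,a}(x)|\leqslant 2\frac{(x-a)^{n}}{n!}\|LD_{n+1}f\|_{[a,x]}$; to obtain the constant $1$ you must apply the inequality to the un-normalised multiplier $(x-t)^{n}/n!$, which is legitimate (it is the form in Lemma 24 of \cite{HKFT}) but should be said explicitly --- or simply use Theorem \ref{2nd mean} as the paper does. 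Third, your opening paragraph hedges between two induction schemes before abandoning both; the ``more explicitly'' paragraph is the actual proof and the first paragraph should be cut.
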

	\begin{proof}
		We will prove this theorem by induction. For $n=0$, \eqref{Taylor0} and\eqref{Taylor1} are equivalent. Assume \eqref{Taylor1} is true for $n=k$ and \eqref{Taylor0} holds for $n=k+1$. Then by Theorem \ref{Continuity implies differentiability} and Theorem \ref{IntByParts1}, we get
		\begin{align*}
		f(x)&-\sum\limits_{i=0}^{k}\frac{f^{(i)}(a)}{i!}(x-a)^{i}=R_{k,a}(x)\\
		&=\frac{1}{k!}\int_{a}^{x}LD_{k+1}f(t)(x-t)^{k}\,dt=\frac{1}{k!}\int_{a}^{x}f^{(k+1)}(t)(x-t)^{k}\,dt\\
		&=\frac{f^{(k+1)}(a)}{(k+1)!}(x-a)^{k+1}+\frac{1}{(k+1)!}\int_{a}^{x}LD_{k+2}f(t)(x-t)^{k+1}\,dt\\
		&=\frac{f^{(k+1)}(a)}{(k+1)!}(x-a)^{k+1}+R_{k+1,a}(x).
		\end{align*}
		Thus
		\[
		f(x)=\sum\limits_{k=0}^{n}\frac{f^{(k)}(a)}{k!}(x-a)^{k}+R_{n,a}(x).
		\]
		Again by Theorem \ref{2nd mean}, we get
		\begin{align*}
		|R_{n,a}(x)|&=\left| \frac{(x-a)^{n}}{n!}\int_{a}^{\xi}LD_{n+1}f(t) \right|\quad\text{for some $\xi\in[a,x]$}\\
		&\leqslant \frac{(x-a)^{n}}{n!}\|LD_{n+1}f\|_{[a,x]}
		\end{align*}
		and it follows
		\[\|R_{n,a}\|_{\infty}\leqslant\frac{(b-a)^{n}}{n!}\|LD_{n+1}f\|_{[a,b]}.\qedhere\]
	\end{proof}

	\begin{example}
		Let $f$ be the function given in \eqref{Eq13} and let $F_{n}$ be the nth primitive of $f$, i.e., $F_{n}^{(n)}=f$ on $[0,1]$. Then
		\begin{align}\label{Taylor's function}
		F_{n}(x)=\sum\limits_{k=0}^{n}\frac{F_{n}^{(k)}(0)}{k!}x^{k}+\int_{0}^{x}LD_{n+1}F_{n}(t)\,dt\qquad\text{for $x\in [0,1]$}.
		\end{align}
		The integral sign in \eqref{Taylor's function} cannot be replaced by $(\mathsf{P})\int_{0}^{x}$, as $LD_{n+1}F_{n}=LD_{1}f$ is not Perron integrable on $[0,1]$ (Theorem \ref{remarkable theorem}).
	\end{example}

	\section{Poisson integral}\label{Poisson integral}

	\par Let us first define some mathematical conventions.
	We denote the partial differential operator of order $k\in \mathbb{N}$ with respect to $x$ by $\partial_{x}^{k}$. And if $F$ is function defined over $\mathbb{R}^{n}$, the we denote $\partial_{x}^{0}F(x_{1},...,x_{n})=F(x_{1},...,x_{n})$. We also denote the total variation of a function $g(x_{1},...,x_{n})$ with respect to the variable $x_{i}$ on an interval $I$ by $V_{I}[g_{n}(x_{1},...,x_{i-1},\,\cdot\,,x_{i+1},...,x_{n})]$.
	
	\begin{lemma}\label{Poisson lemma}
		Let $I_{0}=I_{1}\times I_{2}\times I_{3}$, where $I_{1}$, $I_{2}$ be two intervals in $\mathbb{R}$ and $I_{3}$ is a compact interval in $\mathbb{R}$. Let  $g(x,y,t)$ be a real-valued function defined over $I_{0}$ such that $\partial_{t}^{1}g(x,y,t)$ is continuous on $I_{0}$. If $\{h_{n}\}$ is a sequence  converging to $0$ such that for all $n$, $x+h_{n}\in I_{1}$, and if we define $g_{n}(x,y,t)=g(x+h_{n},y,t)$, then for each $(x,y)\in I_{1}\times I_{2}$, the sequence of reals $\{V_{I_{3}}[g_{n}(x,y,\cdot)]\}$ is bounded.
	\end{lemma}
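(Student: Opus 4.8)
The plan is to reduce the total variation of $g_n(x,y,\cdot)$ on the compact interval $I_3$ to an integral of $\left|\partial_t^1 g\right|$, and then to bound that integral uniformly in $n$ by exploiting the continuity of $\partial_t^1 g$ on an appropriate compact subset of $I_0$.

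First I would fix $(x,y)\in I_1\times I_2$ and observe that, since $\partial_t^1 g$ is continuous on $I_0$, for every $n$ the one-variable function $t\mapsto g_n(x,y,t)=g(x+h_n,y,t)$ is continuously differentiable on the compact interval $I_3$. Hence, by the classical identity relating the variation of a $C^1$ function to the integral of the modulus of its derivative,
\[
V_{I_3}\!\left[g_n(x,y,\cdot)\right]=\int_{I_3}\left|\partial_t^1 g(x+h_n,y,t)\right|\,dt .
\]

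Next I would produce a uniform bound for the right-hand side. Since $h_n\to 0$, the set $K=\{x\}\cup\{x+h_n\mid n\in\mathbb{N}\}$ is a compact subset of $I_1$ (a convergent sequence together with its limit), so $K\times\{y\}\times I_3$ is a compact subset of $I_0$. Continuity of $\partial_t^1 g$ then yields
\[
M:=\sup\left\{\left|\partial_t^1 g(u,y,t)\right|\ \middle|\ u\in K,\ t\in I_3\right\}<\infty ,
\]
and therefore, for every $n$,
\[
V_{I_3}\!\left[g_n(x,y,\cdot)\right]=\int_{I_3}\left|\partial_t^1 g(x+h_n,y,t)\right|\,dt\leqslant M\,\lambda(I_3),
\]
a finite bound independent of $n$, which is exactly the assertion.

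\textbf{Main obstacle.} There is no serious difficulty here: the only point that needs a word of justification is the identity $V_{I_3}[g_n(x,y,\cdot)]=\int_{I_3}|\partial_t^1 g(x+h_n,y,t)|\,dt$ for a continuously differentiable function on a compact interval, which is standard. Everything else is a routine compactness argument, the key idea being to replace the (possibly unbounded) interval $I_1$ by the compact set $K$ carrying the sequence $\{x+h_n\}$ and its limit.
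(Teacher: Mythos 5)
Your proof is correct and follows essentially the same route as the paper: reduce $V_{I_3}[g_n(x,y,\cdot)]$ to $\int_{I_3}|\partial_t^1 g(x+h_n,y,t)|\,dt$ via the $C^1$ identity for total variation, then bound the integrand uniformly in $n$ by the supremum of $|\partial_t^1 g|$ over a compact subset of $I_0$ containing the points $x+h_n$. The only cosmetic difference is that the paper takes a compact interval $J_x\subseteq I_1$ containing $x$ and all $x+h_n$, whereas you take the compact set $\{x\}\cup\{x+h_n\mid n\in\mathbb{N}\}$; both work equally well.
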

	\begin{proof}
		Fix $(x,y)\in I_{1}\times I_{2}$ and choose a compact interval, $J_{x}$, of $x$ such that $J_{x}\subseteq I_{1}$ and $x+h_{n}\in J_{x}$ for all $n$. Then,
		\begin{align*}
		\left|\partial_{t}^{1}g_{n}(x,y,t)\right|=\left|\partial_{t}^{1}g(x+h_{n},y,t)\right|\leqslant \sup\limits_{(u,t)\in J_{x}\times I_{3}}\left|\partial_{t}^{1}g(u,y,t)\right|=M(y),
		\end{align*}
		where $M$ is a real-valued function on $I_{2}$. Which gives
		\[
		V_{I_{3}}[g_{n}(x,y,\cdot)]=\int\limits_{I_{3}}\left|\partial_{t}^{1}g_{n}(x,y,t)\right|\,dt\leqslant \lambda(I_{3})M(y).
		\]
		Thus $\{V_{I_{3}}[g_{n}(x,y,\cdot)]\}$ is bounded.
	\end{proof}

	\par Let $U$ be the open unit disc centred at the origin of $\mathbb{R}^{2}$, let $\partial U$ be the boundary of $U$ and let $f\colon \partial U\to \mathbb{R}$. If $G_{f}(t)=f(e^{it})$ is Lebesgue integrable on $[-\pi, \pi]$, then the Poisson integral of $f$ on $U$ is defined by $F(r,\theta)=(1/2\pi)\int_{-\pi}^{\pi}G_{f}(t)P_{r}(\theta-t)\,dt$, where $r\in[0,1)$ and $\theta\in[-\pi,\pi]$ and $P_{r}(\theta-t)=(1-r^{2})/(1-2r\cos(\theta-t)+r^{2})$ (Section 11.6 of \cite[p.~233]{RealComplexAnalysis}). For $G_{f}\in \mathcal{LP}[-\pi, \pi]$, it is also possible to define $F(r,\theta)$, which is a harmonic function on $U$.
	
	\begin{theorem}\label{Poisson theorem}
		Let $U$ be the open unit disc centred at the origin of $\mathbb{R}^{2}$, let $f\colon \partial U\to \mathbb{R}$ and let $G_{f}\in \mathcal{LP}[-\pi, \pi]$. Then 
		\begin{align}\label{Poisson function}
		F(r,\theta)=\frac{1}{2\pi}\int_{-\pi}^{\pi}G_{f}(t)P_{r}(\theta-t)\,dt
		\end{align}
		is well-defined, where $(r,\theta)\in [0,1)\times[-\pi,\pi]$. Moreover, $F(r,\theta)$ is harmonic function on $U$.
	\end{theorem}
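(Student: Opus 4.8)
The plan is to argue in two stages: first that for each fixed $(r,\theta)\in[0,1)\times[-\pi,\pi]$ the right-hand side of \eqref{Poisson function} genuinely exists as a Laplace integral, and then that the resulting $F$ solves Laplace's equation on $U$, the second stage amounting to differentiating \eqref{Poisson function} under the integral sign with the interchange justified by Theorem~\ref{sequence of integrals}.

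For well-definedness I would fix $(r,\theta)$ with $r<1$ and note that $t\mapsto P_{r}(\theta-t)$ is $C^{\infty}$ on $[-\pi,\pi]$, hence of bounded variation; writing it as $P_{r}(\theta+\pi)+\int_{-\pi}^{t}g_{r,\theta}(u)\,du$ with $g_{r,\theta}\in\mathcal{BV}[-\pi,\pi]$ its $t$-derivative, Theorem~\ref{IntByParts1} applied to $G_{f}\in\mathcal{LP}[-\pi,\pi]$ and $g_{r,\theta}$ yields $G_{f}(\cdot)\int_{-\pi}^{\cdot}g_{r,\theta}\in\mathcal{LP}[-\pi,\pi]$, and adding the constant multiple $P_{r}(\theta+\pi)\,G_{f}$ (Theorem~\ref{Theorem 1}) gives $G_{f}(\cdot)\,P_{r}(\theta-\cdot)\in\mathcal{LP}[-\pi,\pi]$. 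So $F(r,\theta)$ is well-defined.

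For harmonicity I would show that $F\in C^{2}(U)$ with
\[ \partial_{r}^{\,j}\partial_{\theta}^{\,k}F(r,\theta)=\frac{1}{2\pi}\int_{-\pi}^{\pi}G_{f}(t)\,\partial_{r}^{\,j}\partial_{\theta}^{\,k}\bigl[P_{r}(\theta-t)\bigr]\,dt\qquad(j+k\le 2), \]
proving these identities one derivative at a time. For the $\theta$-derivative at $(r,\theta_{0})$ with $r<1$ and an arbitrary sequence $h_{n}\to 0$, the difference quotient equals $(2\pi)^{-1}\int_{-\pi}^{\pi}G_{f}(t)\Phi_{n}(t)\,dt$ with $\Phi_{n}(t)=h_{n}^{-1}[P_{r}(\theta_{0}+h_{n}-t)-P_{r}(\theta_{0}-t)]$, which is $C^{\infty}$ in $t$; setting $g_{n}=\Phi_{n}'$ and $\Psi_{n}(t)=\int_{-\pi}^{t}g_{n}=\Phi_{n}(t)-\Phi_{n}(-\pi)$ one has $\int G_{f}\Phi_{n}=\int G_{f}\Psi_{n}+\Phi_{n}(-\pi)\int G_{f}$. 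The mean value theorem bounds $|g_{n}'|=|\Phi_{n}''|$ on $[-\pi,\pi]$ by $\sup_{\phi}|P_{r}'''(\phi)|$ (with $P_{r}'''$ the third derivative of the one-variable map $\phi\mapsto P_{r}(\phi)$), which is finite since $r<1$, so $\{g_{n}\}$ has uniformly bounded variation on $[-\pi,\pi]$ --- this is precisely what Lemma~\ref{Poisson lemma} supplies --- while $g_{n}$ and $\Psi_{n}$ converge pointwise to the respective derivatives of the kernel. Theorem~\ref{sequence of integrals} then gives $\int G_{f}\Psi_{n}\to\int G_{f}\Psi_{\infty}$; since $\Phi_{n}(-\pi)$ also converges, the additive constants cancel and one obtains $\partial_{\theta}F(r,\theta_{0})=(2\pi)^{-1}\int_{-\pi}^{\pi}G_{f}(t)\,\partial_{\theta}[P_{r}(\theta-t)]|_{\theta=\theta_{0}}\,dt$ because $h_{n}\to 0$ was arbitrary. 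Repeating the argument with $\partial_{\theta}P_{r}$, $\partial_{r}P_{r}$, $\partial_{\theta}^{2}P_{r}$ and $\partial_{r}\partial_{\theta}P_{r}$ in place of $P_{r}$ --- and, when differentiating in $r$, restricting $r$ to a compact subinterval of $[0,1)$ so that the pertinent higher derivative of the kernel stays bounded --- delivers all partials of order $\le 2$; their continuity in $(r,\theta)$ follows by one further application of Theorem~\ref{sequence of integrals} (the kernels $P_{r_{n}}(\theta_{n}-\cdot)$ converge pointwise and have uniformly bounded variation in $t$ on a compact box). Finally, for each fixed $t$ the map $(r,\theta)\mapsto P_{r}(\theta-t)$ is harmonic on $U$ --- it is the real part of $z\mapsto(e^{it}+z)/(e^{it}-z)$ with $z=re^{i\theta}$, whose only singularity, at $e^{it}\in\partial U$, lies outside $U$ --- so, writing $\Delta$ for the polar Laplacian, $\Delta F(r,\theta)=(2\pi)^{-1}\int_{-\pi}^{\pi}G_{f}(t)\,\Delta_{(r,\theta)}P_{r}(\theta-t)\,dt=0$ throughout $U$, which is the assertion.

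The hard part will be the interchange of limit and Laplace integral in the differentiation step: since the Laplace integral is non-absolute there is no dominated-convergence bound on $|G_{f}|\,|\partial_{\theta}P_{r}|$ to lean on, and everything hinges on verifying the uniform-bounded-variation hypothesis of Theorem~\ref{sequence of integrals} for the difference quotients of the kernel's derivatives, which is exactly the role of Lemma~\ref{Poisson lemma}. The only other point requiring care is tracking the additive constants $\Phi_{n}(-\pi)$, so that the limit lands precisely on $\partial_{\theta}P_{r}(\theta_{0}-t)$ and not on a version of it shifted by a constant.
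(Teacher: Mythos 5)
Your proposal is correct and follows essentially the same route as the paper: well-definedness via Theorem~\ref{IntByParts1} applied to the ($C^\infty$, hence BV-derivative) kernel, then differentiation under the integral sign justified by Theorem~\ref{sequence of integrals} with the uniform-bounded-variation hypothesis supplied by Lemma~\ref{Poisson lemma} (the paper first rewrites the difference quotient as $\partial_r^{1}P(r+\xi_n,\theta,t)$ by the mean value theorem before invoking the lemma, which is only a cosmetic difference from your bound on $|\Phi_n''|$), and finally harmonicity of the kernel. You are in fact slightly more careful than the paper about the additive constants arising from $G_n(a)=0$ in Theorem~\ref{sequence of integrals} and about verifying $F\in C^{2}(U)$.
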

	\begin{proof}
		Let us denote $P(r,\theta,t)=P_{r}(\theta-t)$ and $I_{0}=[0,1)\times[-\pi, \pi]\times[-\pi, \pi]$. Then it is obvious that for any non-negative integers $k$, $l$ and $m$; $\partial_{t}^{k}\partial_{r}^{l}\partial_{\theta}^{m}P$ exists and continuous on $I_{0}$. Thus the continuity of $\partial_{t}^{2}\partial_{r}^{k}P$ implies 
		$$V_{[-\pi, \pi]}[\partial_{t}^{1}\partial_{r}^{k}P(r,\theta,\cdot)]<\infty.$$
		So $(1/2\pi)\int_{-\pi}^{\pi}G_{f}(t)\partial_{r}^{k}P(r,\theta,t)\,dt$ is well-defined for all non-negative integer $k$. We now prove that for any positive integer $k$,
		\begin{align}\label{Poisson function1}
		\partial_{r}^{k}F(r,\theta)=\frac{1}{2\pi}\int_{-\pi}^{\pi}G_{f}(t)\partial_{r}^{k}P(r,\theta,t)\,dt.
		\end{align}
		Accordingly, if we can prove \eqref{Poisson function1} for $k=1$, then it will follow by straightforward induction. Let $\{h_{n}\}$ be any sequence converging to $0$ such that $r+h_{n}\in [0,1)$ and $h_{n}\neq 0$ for all $n$. Then
		\begin{align*}
		\frac{F(r+h_{n},\theta)-F(r,\theta)}{h_{n}}&=\frac{1}{2\pi}\int_{-\pi}^{\pi}G_{f}(t)\left(\frac{P(r+h_{n},\theta,t)-P(r,\theta,t)}{h_{n}}\right)\,dt\\
		&=\frac{1}{2\pi}\int_{-\pi}^{\pi}G_{f}(t)\partial_{r}^{1}P(r+\xi_{n},\theta,t)\,dt\qquad\text{for $0<\xi_{n}<h_{n}$}\\
		&=\frac{1}{2\pi}\int_{-\pi}^{\pi}G_{f}(t)g_{n}(r,\theta,t)\,dt,
		\end{align*}
		where $g_{n}(r,\theta,t)=\partial_{r}^{1}P(r+\xi_{n},\theta,t)$. As 
		$$\partial_{t}^{2}g_{n}(r,\theta,t)=\partial_{t}^{2}\partial_{r}^{1}P(r+\xi_{n},\theta,t)$$
		is continuous on $I_{0}$, by Lemma \ref{Poisson lemma}, we get $\{V_{[-\pi, \pi]}[\partial_{t}^{1}g_n(r,\theta,.)]\}$ is bounded. Thus by Theorem \ref{sequence of integrals}, we get
		\begin{gather*}
		\partial_{r}^{1}F(r,\theta)=\lim\limits_{n\rightarrow \infty}\frac{1}{2\pi}\int_{-\pi}^{\pi}G_{f}(t)g_{n}(r,\theta,t)\,dt=\frac{1}{2\pi}\int_{-\pi}^{\pi}G_{f}(t)\partial_{r}^{1}P(r,\theta,t)\,dt.
		\end{gather*}
		Proof of $\partial_{\theta}^{k}F(r,\theta)=(1/2\pi)\int_{-\pi}^{\pi}G_{f}(t)\partial_{\theta}^{k}P(r,\theta,t)\,dt$ for $k\in\mathbb{N}$, is similar. If we denote $\Delta=\partial_{r}^{2}+\frac{1}{r}\partial_{r}^{1}+\frac{1}{r^{2}}\partial_{\theta}^{2}$, which is the polar form of the Laplacian, then we get
		\[
		\Delta F(r,\theta)=\frac{1}{2\pi}\int_{-\pi}^{\pi}G_{f}(t)\Delta P_{r}(\theta-t)\,dt=0,
		\]
		which completes the proof.
	\end{proof}

	\begin{theorem}[\bf Boundary behaviour of Poisson integrals]
		Let $U$ be the open unit disc centred at the origin of $\mathbb{R}^{2}$, let $f\colon \partial U\to \mathbb{R}$ and let $G_{f}\in \mathcal{LP}[-\pi, \pi]$. If we denote $F_{r}(\theta)=F(r,\theta)$ {\upshape(see \eqref{Poisson function})}, where $0\leqslant r<1$ and $-\pi\leqslant \theta \leqslant \pi$, then
		\begin{align}\label{BoundaryCondition-1}
		\|F_{r}\|_{[-\pi,\pi]}\leqslant \|G_{f}\|_{[-\pi,\pi]}.
		\end{align}
		Moreover,
		\begin{align}\label{BoundaryCondition-2}
		\|F_{r}-G_{f}\|_{[-\pi,\pi]}\rightarrow 0\quad\text{as \quad$r\rightarrow 1$.}
		\end{align}
	\end{theorem}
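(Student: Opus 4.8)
The plan is to exploit the Alexiewicz norm structure together with Theorem~\ref{IntByParts1} (integration by parts) and Theorem~\ref{sequence of integrals}. First I would recall that the Poisson kernel $t\mapsto P_{r}(\theta-t)$ is, for each fixed $(r,\theta)\in[0,1)\times[-\pi,\pi]$, a function of bounded variation on $[-\pi,\pi]$ (being $C^{1}$ in $t$ on the compact interval), so that $P_{r}(\theta-t)=H(\theta)+\int_{-\pi}^{t}\partial_{t}^{1}P_{r}(\theta-s)\,ds$ for a suitable constant, i.e.\ $P_{r}(\theta-\cdot)$ is a primitive of its own $t$-derivative. Hence, writing $\Phi(t)=\int_{-\pi}^{t}G_{f}$, Theorem~\ref{IntByParts1} gives
\begin{align*}
F(r,\theta)=\frac{1}{2\pi}\int_{-\pi}^{\pi}G_{f}(t)P_{r}(\theta-t)\,dt
=\frac{1}{2\pi}\Big[\Phi(\pi)P_{r}(\theta-\pi)-(\mathsf{P})\!\int_{-\pi}^{\pi}\Phi(t)\,\partial_{t}P_{r}(\theta-t)\,dt\Big].
\end{align*}
Then for \eqref{BoundaryCondition-1} I would estimate $\|F_{r}\|_{[-\pi,\pi]}=\sup_{\theta}\big|\int_{-\pi}^{\theta}F_{r}\big|$; after interchanging the $\theta$-integral with the $t$-integral (justified since everything in sight is a classical integral once $G_f$ has been traded for its continuous primitive $\Phi$), one is left with an inner kernel $\frac{1}{2\pi}\int_{-\pi}^{\theta}P_{r}(u-t)\,du$ which, as a function of $t$, has total variation at most $1$ on $[-\pi,\pi]$ and whose infimum-of-absolute-value contribution is controlled; invoking the inequality $\big|\int_a^b fG\big|\le \big|\int_a^b f\big|\inf|G|+\|f\|_{[a,b]}V_G[a,b]$ stated just after Theorem~\ref{IntByParts1}, with $f=G_f$, yields $\|F_{r}\|_{[-\pi,\pi]}\le\|G_{f}\|_{[-\pi,\pi]}$ since the Poisson kernel is a probability kernel.

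For \eqref{BoundaryCondition-2} the natural route is to reduce to the known classical fact and then use density. By Theorem~\ref{Denseness of plynomials} (or directly by Weierstrass, since $\Phi=\int_{-\pi}^{\cdot}G_f$ is continuous), choose trigonometric polynomials $q$ with $\|\Phi-Q\|_{\infty}$ small, where $Q$ is the antiderivative; equivalently pick $g=q'$ with $\|G_f-g\|_{[-\pi,\pi]}$ small. The Poisson averaging operator $G\mapsto F_r[G]$ is linear, and by \eqref{BoundaryCondition-1} it is a contraction for $\|\cdot\|_{[-\pi,\pi]}$ uniformly in $r$; hence $\|F_r[G_f]-F_r[g]\|_{[-\pi,\pi]}\le\|G_f-g\|_{[-\pi,\pi]}$. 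Since $g$ is a (continuous, even smooth) genuinely Lebesgue-integrable function, $\|F_r[g]-g\|_{\infty}\to0$ as $r\to1$ by the classical boundary behaviour of Poisson integrals, and $\|\cdot\|_{[-\pi,\pi]}\le(b-a)\|\cdot\|_{\infty}$ gives $\|F_r[g]-g\|_{[-\pi,\pi]}\to0$. A three-epsilon argument
\begin{align*}
\|F_r[G_f]-G_f\|_{[-\pi,\pi]}\le\|F_r[G_f-g]\|_{[-\pi,\pi]}+\|F_r[g]-g\|_{[-\pi,\pi]}+\|g-G_f\|_{[-\pi,\pi]}
\end{align*}
then closes the proof.

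The main obstacle I anticipate is the rigorous justification of the Fubini-type interchange $\int_{-\pi}^{\theta}\!\int_{-\pi}^{\pi}=\int_{-\pi}^{\pi}\!\int_{-\pi}^{\theta}$ in the presence of a merely Laplace-integrable $G_f$, and the accompanying bound on the total variation in $t$ of the iterated kernel $\theta\mapsto\int_{-\pi}^{\theta}P_r(u-\cdot)\,du$; this is where one genuinely needs the integration-by-parts theorem and the variation estimate rather than naive dominated convergence, and where the constant $1$ in \eqref{BoundaryCondition-1} has to be extracted exactly (using $\frac{1}{2\pi}\int_{-\pi}^{\pi}P_r=1$ and positivity of $P_r$). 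The remaining steps — linearity, the contraction property, and the three-epsilon reduction to the classical case — are routine once \eqref{BoundaryCondition-1} is in hand.
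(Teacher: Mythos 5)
Your treatment of \eqref{BoundaryCondition-2} is essentially the paper's argument: approximate $G_{f}$ by $G_{q}$ with $q$ a polynomial (Theorem \ref{Denseness of plynomials}), use the uniform-in-$r$ contraction property \eqref{BoundaryCondition-1} on the difference, and invoke the classical boundary behaviour for the smooth approximant together with the comparison of $\|\cdot\|_{[-\pi,\pi]}$ with the $L_{1}$ (or sup) norm; that part is sound, modulo \eqref{BoundaryCondition-1}.

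The gap is in your proof of \eqref{BoundaryCondition-1}. After the Fubini interchange you are estimating $\int_{-\pi}^{\pi}G_{f}(t)K_{\theta}(t)\,dt$ with $K_{\theta}(t)=\frac{1}{2\pi}\int_{-\pi}^{\theta}P_{r}(u-t)\,du$, and you assert that $K_{\theta}$ has total variation at most $1$ in $t$. That is false: $K_{\theta}'(t)=\frac{1}{2\pi}\bigl(P_{r}(-\pi-t)-P_{r}(\theta-t)\bigr)$, so
\[
V_{[-\pi,\pi]}[K_{\theta}]=\frac{1}{2\pi}\int_{-\pi}^{\pi}\left|P_{r}(-\pi-t)-P_{r}(\theta-t)\right|\,dt,
\]
and since each of $P_{r}(-\pi-\cdot)$ and $P_{r}(\theta-\cdot)$ has integral $2\pi$ over a period while they concentrate at different points as $r\to1$ (for $\theta\neq\pm\pi$), this quantity tends to $2$. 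Meanwhile $\inf_{t}K_{\theta}(t)\to0$ for such $\theta$, so the inequality stated after Theorem \ref{IntByParts1} only yields $\|F_{r}\|_{[-\pi,\pi]}\leqslant 2\|G_{f}\|_{[-\pi,\pi]}$. That weaker, uniform-in-$r$ bound would still be enough to push the three-epsilon argument for \eqref{BoundaryCondition-2} through, but it does not prove \eqref{BoundaryCondition-1} as stated. The paper obtains the constant $1$ by a different mechanism: for each fixed $\theta$ it applies the second mean value theorem (Theorem \ref{2nd mean}) in the variable $t$ and uses $P_{r}(\theta-\pi)=P_{r}(\theta+\pi)$ to collapse the two terms, getting the pointwise bound $2\pi|F_{r}(\theta)|\leqslant\|G_{f}\|_{[-\pi,\pi]}\,P_{r}(\theta+\pi)$; integrating in $\theta$ and using $\frac{1}{2\pi}\int_{-\pi}^{\pi}P_{r}=1$ then gives \eqref{BoundaryCondition-1} exactly, and also sidesteps the Fubini interchange you identify as the main obstacle. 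To repair your proof you would need either to adopt that pointwise-then-integrate argument or to find a genuinely sharper estimate for your iterated kernel than the variation inequality can supply.
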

	\begin{proof}
		As $P_{r}(\theta-t)>0$ and $P_{r}(\theta-\pi)=P_{r}(\theta+\pi)$, by Theorem \ref{2nd mean}, we get
		\begin{align*}
		\left|2\pi F_{r}(\theta)\right|&= \left|P_{r}(\theta+\pi)\int_{-\pi}^{\xi}G_{f} + P_{r}(\theta-\pi)\int_{\xi}^{\pi}G_{f}\right|\\
		&=\left|\int_{-\pi}^{\pi}G_{f}\right|P_{r}(\theta+\pi)\leqslant \|G_{f}\|_{[-\pi,\pi]}P_{r}(\theta+\pi),
		\end{align*}
		which gives, $\left|\int_{-\pi}^{x}F_{r}(\theta)\,d\theta\right|\leqslant (1/2\pi)\|G_{f}\|_{[-\pi,\pi]}\int_{-\pi}^{\pi}P_{r}(\theta+\pi)\,d\theta=\|G_{f}\|_{[-\pi,\pi]}$ for $x\in [-\pi,\pi]$. Thus
		\[
		\|F_{r}\|_{[-\pi,\pi]}\leqslant \|G_{f}\|_{[-\pi,\pi]}.
		\]
		\par To prove \eqref{BoundaryCondition-2}, choose an $\epsilon\,(>0)$. By Theorem \ref{Denseness of plynomials}, there exists a polynomial $q\colon\partial U\to \mathbb{R}$ such that $\|G_{q}-G_{f}\|_{[-\pi,\pi]}<\epsilon$. Let $Q(r,\theta)$ be the Poisson integral of $q$ and let $Q_{r}(\theta)=Q(r,\theta)$. Then
		\[
		F_{r}-G_{f}=\left( F_{r}-Q_{r} \right) + \left( Q_{r}-G_{q} \right) + \left( G_{q}-G_{f}\right).
		\]
		Now \eqref{BoundaryCondition-1} gives, $\|F_{r}-Q_{r}\|_{[-\pi,\pi]}\leqslant \|G_{f}-G_{q}\|_{[-\pi,\pi]}<\epsilon$. Furthermore, note that, if $h\in \mathcal{LP}[a,b]$, then
		\begin{align*}
		\|h\|_{[-\pi,\pi]}=\sup\limits_{x\in [a,b]}\left| \int_{a}^{x}h \right|\leqslant \int_{a}^{b}\left|h\right|=\|h\|_{1},
		\end{align*}
		where $\|h\|_{1}$ is the well known $L_{1}$-norm. Which implies $\|Q_{r}-G_{q}\|_{[-\pi,\pi]}\leqslant \|Q_{r}-G_{q}\|_{1}$. Thus
		$$\|F_{r}-G_{f}\|_{[-\pi,\pi]}\leqslant 2\epsilon + \|Q_{r}-G_{q}\|_{1}.$$
		Now as $\|Q_{r}-G_{q}\|_{1}\rightarrow 0$ for $r\rightarrow 1$ (Theorem 11.16 of \cite[p.~239]{RealComplexAnalysis}), and $\epsilon$ is arbitrary, we get $\|F_{r}-G_{f}\|_{[-\pi,\pi]}\rightarrow 0$ for $r\rightarrow 1$.
	\end{proof}

	\section{Existence and uniqueness theorems for a system of generalised ordinary differential equations and higher-order generalised ordinary differential equation}\label{Exist-uniqu-IVP}
	
	\par Here we give a local existence and uniqueness theorem for a system of generalised ordinary differential equations and then with the help of that a local existence and uniqueness theorem for higher-order generalised ordinary differential equation will be given.
	\par Consider the following system of generalised ordinary differential equations with an initial condition.
	\begin{align}\label{SystemGeneralisedIVP}
	\begin{cases}
	LD_{1}x_{1}(t)=f_{1}(t,x_{1}(t),...,x_{n}(t))\\
	LD_{1}x_{2}(t)=f_{2}(t,x_{1}(t),...,x_{n}(t))\\
	\qquad\qquad\qquad...\\
	LD_{1}x_{n}(t)=f_{n}(t,x_{1}(t),...,x_{n}(t));
	\end{cases}
	\end{align}
	where $t\in I$ and $x_{i}(t_{0})=\alpha_{i}$ for some $t_{0}\in I$, $1\leqslant i\leqslant n$. If we write ${\bf x}=(x_{1},...,x_{n})$, $\overline{\bf x}={\bf x}^{T}$, $f_{i}(t,{\bf x})=f_{i}(t,x_{1},...,x_{n})$ for $i=1,...,n$, $\overline{f}(t,\overline{\bf x})=(f_{1}(t,{\bf x}),...,f_{n}(t,{\bf x}))^{T}$ and $LD_{1}\overline{\bf x}(t)=(LD_{1}x_{1}(t),...,LD_{1}x_{n}(t))^{T}$ then \eqref{SystemGeneralisedIVP} can be written as follows:
	\begin{align}\label{EqSystemGeneralisedIVP}
	\begin{cases}
	LD_{1}\overline{\bf x}(t)=\overline{f}(t,\overline{\bf x}(t)),\\
	\overline{\bf x}(t_{0})=(\alpha_{1},...\alpha_{n})^{T}
	\end{cases}
	\end{align}
	where $t,t_{0}\in I$. We also write $\int\overline{f}(t,\overline{\bf x})= (\int f_{1}(t,{\bf x}),...,\int f_{n}(t,{\bf x}))^{T}$ and say $\overline{f}(t,\overline{\bf x}(t))$ is Laplace continuous at $t=t^{*}\in I$ if for each $i=1,...,n$, $f_{i}(t,{\bf x}(t))$ is Laplace continuous at $t=t^{*}\in I$. Without violating the generality of \eqref{EqSystemGeneralisedIVP} we can assume $t_{0}$ to be an interior point of $I$.
	
	\begin{theorem}\label{ThSystemGeneralisedIVP}
		Let $I$ be an interval and let $t_{0}$ be an interior point of $I$. If
		\begin{enumerate}[\upshape A.]
			\item $\overline{f}(t,\overline{\bf x}(t))$ is Laplace continuous on $I$ for all $\overline{\bf x}\in \mathcal{BLC}^{n}(I)$ and
			\item there is a neighbourhood $U$ of $t_{0}$ in $I$ such that for each $i=1,...,n$ and $\overline{\bf x}, \overline{\bf y}\in \mathcal{BLC}^{n}(I)$,
			\[
			\left|f_{i}(t,{\bf x}(t))-f_{i}(t,{\bf y}(t))\right|\leqslant v_{t_{0}}(t)\|\overline{\bf x}-\overline{\bf y}\|_{n,\infty}\qquad\text{a.e. on $U$,}
			\]
			where $v_{t_{0}}$ is a Lebesgue integrable function on $U$,
		\end{enumerate}
		then there exists a unique solution of \eqref{EqSystemGeneralisedIVP} on $[t_{0}-a, t_{0}+a]\subseteq U$, for some $a(>0)$.
	\end{theorem}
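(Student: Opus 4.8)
The plan is to recast the initial value problem \eqref{EqSystemGeneralisedIVP} as a fixed point equation for a Picard-type operator and apply the Banach contraction principle on the complete space supplied by Theorem \ref{Complete-bounded-Laplace}. The first step is the equivalence: for a subinterval $J\ni t_0$, a vector function $\overline{\bf x}$ with continuous components solves \eqref{EqSystemGeneralisedIVP} on $J$ if and only if $x_i(t)=\alpha_i+\int_{t_0}^{t}f_i(s,{\bf x}(s))\,ds$ for every $t\in J$ and every $i$. For the forward implication one uses Theorem \ref{THEOREm}: by hypothesis~A each $f_i(\cdot,{\bf x}(\cdot))$ is Laplace continuous, hence Laplace integrable on $J$ (Laplace continuity presupposes Laplace integrability, which restricts to subintervals by Theorem \ref{Theorem 1}), and $LD_1 x_i=f_i(\cdot,{\bf x}(\cdot))$ on $J$. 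For the backward implication one uses Theorem \ref{Continuous primitive} (the primitive is continuous) together with Theorem \ref{Continuity implies differentiability}A. To make hypotheses A and B available on the small interval $[t_0-a,t_0+a]$ rather than on all of $I$, I would extend any $\overline{\bf x}\in\mathcal{BLC}^{n}([t_0-a,t_0+a])$ constantly outside $[t_0-a,t_0+a]$; the extension is bounded and Laplace continuous on $I$ (the one-sided data at the junction points match trivially), so A and B apply to it.

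Next, fix $a>0$ with $[t_0-a,t_0+a]\subseteq U$ and define, on $X_a:=\mathcal{BLC}^{n}([t_0-a,t_0+a])$, the operator
\[
(T\overline{\bf x})(t)=(\alpha_1,\dots,\alpha_n)^{T}+\int_{t_0}^{t}\overline{f}(s,\overline{\bf x}(s))\,ds .
\]
By Theorem \ref{Continuous primitive} each component of $T\overline{\bf x}$ is continuous on the compact interval $[t_0-a,t_0+a]$, hence bounded and, being continuous, Laplace continuous; thus $T$ maps $X_a$ into itself. By Theorem \ref{Complete-bounded-Laplace}, $(X_a,\|\,\cdot\,\|_{n,\infty})$ is complete, so it suffices to show that $T$ is a contraction for suitably small $a$.

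The contraction estimate is the technical core. Given $\overline{\bf x},\overline{\bf y}\in X_a$ and $i$, the function $h_i(s)=f_i(s,{\bf x}(s))-f_i(s,{\bf y}(s))$ is Laplace integrable and, by hypothesis~B, $|h_i|\leqslant v_{t_0}\|\overline{\bf x}-\overline{\bf y}\|_{n,\infty}$ a.e.\ on $U$. Since Laplace integrable functions are measurable, $h_i$ is measurable and dominated by the Lebesgue integrable function $v_{t_0}\|\overline{\bf x}-\overline{\bf y}\|_{n,\infty}$, hence $h_i$ is Lebesgue integrable; consequently $\left|\int_{t_0}^{t}h_i\right|\leqslant\left|\int_{t_0}^{t}|h_i|\right|\leqslant |V(t)|\,\|\overline{\bf x}-\overline{\bf y}\|_{n,\infty}$, where $V(t)=\int_{t_0}^{t}v_{t_0}$. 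Taking the maximum over $i$ and the supremum over $t\in[t_0-a,t_0+a]$ gives $\|T\overline{\bf x}-T\overline{\bf y}\|_{n,\infty}\leqslant\bigl(\sup_{|t-t_0|\leqslant a}|V(t)|\bigr)\|\overline{\bf x}-\overline{\bf y}\|_{n,\infty}$. Because $V$ is absolutely continuous with $V(t_0)=0$, the factor $\sup_{|t-t_0|\leqslant a}|V(t)|$ tends to $0$ as $a\to 0$, so after shrinking $a$ it is strictly less than $1$ and $T$ is a contraction. The Banach fixed point theorem then yields a unique $\overline{\bf x}^{*}\in X_a$ with $T\overline{\bf x}^{*}=\overline{\bf x}^{*}$, and by the equivalence of the first paragraph $\overline{\bf x}^{*}$ solves \eqref{EqSystemGeneralisedIVP} on $[t_0-a,t_0+a]$.

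Finally I would upgrade uniqueness in $X_a$ to uniqueness among all solutions: any solution $\overline{\bf x}$ of \eqref{EqSystemGeneralisedIVP} on $[t_0-a,t_0+a]$ has continuous components (as required for $LD_1$ to act, and in any case because it satisfies the integral equation, whose right side is continuous by Theorem \ref{Continuous primitive}), hence is bounded and Laplace continuous there, i.e.\ $\overline{\bf x}\in X_a$; by the equivalence it is a fixed point of $T$, so it coincides with $\overline{\bf x}^{*}$. I expect the main obstacle to be the bookkeeping in the contraction step — justifying $\left|\int_{t_0}^{t}h_i\right|\leqslant\left|\int_{t_0}^{t}|h_i|\right|$ in the Laplace setting (which is not automatic, since $|h_i|$ need not be Laplace integrable in general, but here the a.e.\ domination by $v_{t_0}$ forces Lebesgue integrability) and handling $t\geqslant t_0$ and $t\leqslant t_0$ uniformly — together with the routine but necessary transfer of hypotheses A and B from $I$ to the subinterval $[t_0-a,t_0+a]$.
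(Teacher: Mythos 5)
Your proposal is correct and follows essentially the same route as the paper: the Picard operator on $\mathcal{BLC}^{n}([t_{0}-a,t_{0}+a])$, completeness from Theorem \ref{Complete-bounded-Laplace}, the contraction estimate from hypothesis~B via $\int_{t_{0}}^{t}v_{t_{0}}$, and the Banach fixed point theorem, with hypothesis~A used to pass between the integral equation and the differential system. You are in fact more explicit than the paper on several points it leaves tacit (the integral/differential equivalence, why $T$ maps the space into itself, the justification of $\bigl|\int h_{i}\bigr|\leqslant\int|h_{i}|$, and the transfer of the hypotheses to the subinterval), but the underlying argument is the same.
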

	
	\begin{proof}
		Choose an $a(>0)$ such that
		\begin{align}\label{Lipschitz-function}
		\left|\int_{t_{0}}^{t}v_{t_{0}}(s)\,ds\right|\leqslant \frac{1}{2}\qquad\text{for $t\in [t_{0}-a, t_{0}+a]\subseteq I$.}
		\end{align}
		Let us denote $\mathcal{BLC}^{n}([t_{0}-a, t_{0}+a])$ by $X$ and $(\alpha_{1},...,\alpha_{n})$ by $\alpha$. Then $\left( X, \|\,.\,\|_{n,\infty} \right)$ is complete. Define $F:X\to X$ as follows:
		\[
		F(\overline{\bf x}(t))=\overline{\alpha}+\int_{t_{0}}^{t}\overline{f}(s,\overline{\bf x}(s))\,ds.
		\]
		We claim $F$ is a contraction mapping. Let $\overline{\bf x}, \overline{\bf y}\in X$. Then by the second condition of this theorem we get, for each $i=1,...,n$,
		\begin{align*}
		\left| F(x_{i})(t)-F(y_{i})(t) \right|&\leqslant \left|\int_{t_{0}}^{t}f_{i}(s,{\bf x}(s))- f_{i}(s,{\bf y}(s))\,ds\right|\\
		&\leqslant \|\overline{\bf x}-\overline{\bf y}\|_{n,\infty}\left|\int_{t_{0}}^{t}v(s)\,ds\right|\leqslant \frac{1}{2}\|\overline{\bf x}-\overline{\bf y}\|_{n,\infty};
		\end{align*}
		which implies $\|F(\overline{\bf x})-F(\overline{\bf y})\|_{n,\infty}\leqslant (1/2)\|\overline{\bf x}-\overline{\bf y}\|_{n,\infty}$ for all $\overline{\bf x}, \overline{\bf y}\in X$; and it proves our claim. Now by the contraction mapping theorem $F$ has exactly one fixed point, $\overline{\bf x}^{*}\in X$. Moreover, $\overline{\bf x}^{*}$ is the uniform limit of the sequence $\overline{\bf x}_{n}=F(\overline{\bf x}_{n-1})$, $n\in \mathbb{N}$; where $\overline{\bf x}_{0}=\overline{\bf x}(t_{0})$ on $[t_{0}-a, t_{0}+a]$. Thus
		\[
		\overline{\bf x}^{*}(t)=\overline{\alpha}+\int_{t_{0}}^{t}\overline{f}(s,\overline{\bf x}^{*}(s))\,ds.
		\]
		Now by the first condition of this theorem we get, $\overline{\bf x}^{*}$ is the unique element of $X$ such that
		\begin{align*}
		\begin{cases}
		LD_{1}\overline{\bf x}^{*}(t)=\overline{f}(t,\overline{\bf x}^{*}(t))\\
		\overline{\bf x}^{*}(t_{0})=(\alpha_{1},...\alpha_{n})^{T}.
		\end{cases}
		\end{align*}
		for $t\in I$.
	\end{proof}
	 
	\begin{theorem}
		Under the assumption of {\upshape Theorem \ref{ThSystemGeneralisedIVP}}, the initial value problem
		\begin{align}\label{HigherOrderDifferentialEquation}
		\begin{cases}
		LD_{n}x(t)=f(t,x(t),x^{'}(t),...,x^{(n-1)}(t)),\qquad \left( x^{(k)}=\dfrac{d^{k}x}{dt^{k}}\right)\\
		x(t_{0})=\alpha_{1}, x^{'}(t_{0})=\alpha_{2},...,x^{(n-1)}(t_{0})=\alpha_{n}
		\end{cases}
		\end{align}
		has an unique solution on $[t_{0}-a, t_{0}+a]\subseteq U$, for some $a(>0)$.
	\end{theorem}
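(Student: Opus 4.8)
The plan is to reduce the $n$th-order problem \eqref{HigherOrderDifferentialEquation} to a first-order system of the type \eqref{EqSystemGeneralisedIVP} and then invoke Theorem \ref{ThSystemGeneralisedIVP}, the phrase ``under the assumption of Theorem \ref{ThSystemGeneralisedIVP}'' being read as: $f$ is Laplace continuous along bounded Laplace continuous curves and satisfies the Lipschitz estimate B in its arguments $x,x',\dots,x^{(n-1)}$. Put $x_{1}=x$, $x_{2}=x'$, $\dots$, $x_{n}=x^{(n-1)}$, so that $x_{i}'=x_{i+1}$ for $1\leqslant i\leqslant n-1$. Since ordinary differentiability implies Laplace differentiability with the same value (recall $\underline{D}f\leqslant\underline{LD_{1}}f\leqslant\overline{LD_{1}}f\leqslant\overline{D}f$), these relations read $LD_{1}x_{i}=x_{i+1}$; and by the identity $LD_{n}x=LD_{1}\bigl(x^{(n-1)}\bigr)$, valid whenever $x^{(n-1)}$ exists on a neighbourhood (see \cite{OLD}, \cite{HOD}), the last equation of \eqref{HigherOrderDifferentialEquation} becomes $LD_{1}x_{n}=f(t,x_{1},\dots,x_{n})$. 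Writing $\overline{g}(t,\overline{\bf x})=(x_{2},\dots,x_{n},f(t,x_{1},\dots,x_{n}))^{T}$, problem \eqref{HigherOrderDifferentialEquation} is thus equivalent to the system $LD_{1}\overline{\bf x}(t)=\overline{g}(t,\overline{\bf x}(t))$, $\overline{\bf x}(t_{0})=(\alpha_{1},\dots,\alpha_{n})^{T}$.

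I would then check that $\overline{g}$ meets the two hypotheses of Theorem \ref{ThSystemGeneralisedIVP} on a bounded neighbourhood of $t_{0}$. The first $n-1$ coordinates $g_{i}(t,{\bf x})=x_{i+1}$ are coordinate projections, so $g_{i}(t,{\bf x}(t))$ is Laplace continuous for every $\overline{\bf x}\in\mathcal{BLC}^{n}(I)$ and $|g_{i}(t,{\bf x})-g_{i}(t,{\bf y})|=|x_{i+1}-y_{i+1}|\leqslant\|\overline{\bf x}-\overline{\bf y}\|_{n,\infty}$; the last coordinate is $f$, which satisfies conditions A and B by hypothesis. Taking $\widetilde v_{t_{0}}=1+v_{t_{0}}$, still Lebesgue integrable on a bounded $U$, all $n$ coordinates obey the Lipschitz estimate with the single weight $\widetilde v_{t_{0}}$. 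Theorem \ref{ThSystemGeneralisedIVP} then gives, on some $[t_{0}-a,t_{0}+a]\subseteq U$, a unique $\overline{\bf x}^{*}=(x_{1}^{*},\dots,x_{n}^{*})\in\mathcal{BLC}^{n}([t_{0}-a,t_{0}+a])$ solving this system, and from the proof of that theorem $x_{i}^{*}(t)=\alpha_{i}+\int_{t_{0}}^{t}x_{i+1}^{*}(s)\,ds$ for $i<n$ and $x_{n}^{*}(t)=\alpha_{n}+\int_{t_{0}}^{t}f(s,{\bf x}^{*}(s))\,ds$.

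The delicate step, and the main obstacle, is to turn $\overline{\bf x}^{*}$ back into a genuine $n$th-order solution, i.e.\ to upgrade ``$LD_{1}x_{i}^{*}=x_{i+1}^{*}$'' to ``$(x_{i}^{*})'=x_{i+1}^{*}$''. I would run a descending induction on $i$ using Theorems \ref{Continuous primitive} and \ref{Continuity implies differentiability}: every $x_{i}^{*}$ is a Laplace primitive, hence continuous, so $x_{n-1}^{*}$ is the integral of the continuous function $x_{n}^{*}$, giving $(x_{n-1}^{*})'=x_{n}^{*}$ everywhere; then $x_{n-2}^{*}$ is the integral of the $C^{1}$ function $x_{n-1}^{*}$, giving $(x_{n-2}^{*})'=x_{n-1}^{*}$; continuing, $x_{1}^{*}\in C^{n-1}$ with $(x_{1}^{*})^{(k)}=x_{k+1}^{*}$ for $0\leqslant k\leqslant n-1$. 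Setting $x:=x_{1}^{*}$ and using once more $LD_{n}x=LD_{1}\bigl(x^{(n-1)}\bigr)=LD_{1}x_{n}^{*}=f(t,x_{1}^{*},\dots,x_{n}^{*})$ --- the last equality at every $t$ because $f(\cdot,{\bf x}^{*}(\cdot))$ is Laplace continuous, so Theorem \ref{Continuity implies differentiability} applies to its primitive $x_{n}^{*}$ --- together with $x^{(k)}(t_{0})=x_{k+1}^{*}(t_{0})=\alpha_{k+1}$, shows that $x$ solves \eqref{HigherOrderDifferentialEquation}. For uniqueness, any solution $x$ of \eqref{HigherOrderDifferentialEquation} in the admissible class yields $(x,x',\dots,x^{(n-1)})\in\mathcal{BLC}^{n}([t_{0}-a,t_{0}+a])$ solving the system above, so by the uniqueness part of Theorem \ref{ThSystemGeneralisedIVP} it equals $\overline{\bf x}^{*}$, whence $x=x_{1}^{*}$. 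The crux throughout is this passage between the $LD_{1}$-system and an honestly $(n-1)$-times differentiable function, which forces the regularity bootstrap via Theorems \ref{Continuous primitive}, \ref{Continuity implies differentiability} and the iterated-derivative identity $LD_{n}=LD_{1}\circ\frac{d^{n-1}}{dt^{n-1}}$.
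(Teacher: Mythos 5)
Your proposal is correct and follows essentially the same route as the paper: the standard substitution $x_{1}=x,\,x_{2}=x',\dots,x_{n}=x^{(n-1)}$ reducing \eqref{HigherOrderDifferentialEquation} to a first-order system, verification of conditions A and B for the augmented vector field $(x_{2},\dots,x_{n},f)^{T}$, and an appeal to Theorem \ref{ThSystemGeneralisedIVP}. The only difference is one of detail: you spell out the regularity bootstrap (via Theorems \ref{Continuous primitive} and \ref{Continuity implies differentiability}) that upgrades the fixed point of the $LD_{1}$-system to a genuinely $(n-1)$-times differentiable solution, a step the paper compresses into the citation of Theorem \ref{THEOREm} and Theorem 10 of \cite{OLD}.
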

	\begin{proof}
		Using Theorem \ref{THEOREm}, Theorem 10 of \cite{OLD} and the first condition of this theorem, it can be proved that \eqref{HigherOrderDifferentialEquation} is equivalent to the following system of generalised ordinary differential equations:
		\begin{align*}
		\begin{cases}
		x_{1}^{'}(t)= x_{2}(t)\\
		x_{2}^{'}(t)= x_{3}(t)\\
		\qquad\,\,...\\
		x_{n-1}^{'}(t)= x_{n}(t)\\
		LD_{1}x_{n}(t)= f_{n}(t,x_{1}(t),...,x_{n}(t));
		\end{cases}
		\end{align*}
		where $t\in I$, $x_{i}(t_{0})=\alpha_{i}$ for $i=1,...,n$, $x(t)=x_{1}(t)$ and $f_{n}=f$. By the second condition it is evident that
		\[
		\overline{f}(t,\overline{\bf x}(t))=(x_{2}(t),x_{3}(t),..,x_{n}(t),f_{n}(t,{\bf x}(t)))^{T}
		\]
		satisfies the conditions of Theorem \ref{ThSystemGeneralisedIVP} and this completes the proof.
	\end{proof}

	\noindent\textbf{Acknowledgements:} This research work of the first author is supported by the UGC fellowship of India (Serial no.- 2061641179, Ref. no.- 19/06/2016(i) EU-V and Roll no.- 424175 under the UGC scheme). UGC's financial support is highly appreciated.
	
	\bibliographystyle{plain} 
	\bibliography{MyBibliography.bib}
	
\end{document}